\documentclass[12pt]{article}
\usepackage[utf8]{inputenc}
\usepackage{times}
\usepackage[centertags]{amsmath}
\usepackage{amssymb}
\usepackage{amsthm}
\usepackage{enumitem}
\usepackage[utf8]{inputenc}
\usepackage[T1]{fontenc}
\usepackage[colorlinks=true,urlcolor=red,citecolor=red,linkcolor=red]{hyperref}
\usepackage[a4paper, top=5cm, bottom=5cm, left=3.5cm, right=2.5cm]{geometry}
\usepackage[italian,english]{babel}
\usepackage{float}
\usepackage{mathtools}
\usepackage{graphicx}
\usepackage{caption}
\usepackage{subcaption}
\usepackage{tikz}
\usepackage{tikz-3dplot}
\usepackage{qtree}
\usepackage{tikz-qtree}
\usepackage[lined,algonl,boxed]{algorithm2e}
\usepackage{pgfplots}
\pgfplotsset{compat=1.13}
\usetikzlibrary{trees,positioning,shapes}
\usetikzlibrary{patterns}
\usepackage{verbatim}
\newtheorem{theorem}{Theorem}[section]
\newtheorem{definition}[theorem]{Definition}
\newtheorem{prop}[theorem]{Proposition}
\newtheorem{lemma}[theorem]{Lemma}
\newtheorem{setting}[theorem]{Setting}

\theoremstyle{definition}
\newtheorem{oss}[theorem]{Remark}

\theoremstyle{definition}
\newtheorem{ex}[theorem]{Example}

\newcommand{\bs}{\boldsymbol}

\newcommand{\al}{\boldsymbol{\alpha}}
\newcommand{\be}{\boldsymbol{\beta}}
\newcommand{\de}{\boldsymbol{\delta}}
\newcommand{\e}{\boldsymbol{e}}
\newcommand{\om}{\boldsymbol{\omega}}
\newcommand{\g}{\boldsymbol{\gamma}}
\newcommand{\te}{\boldsymbol{\theta}}
\newcommand{\he}{\boldsymbol{\eta}}
\newcommand{\ga}{\boldsymbol{\gamma}}
\newcommand{\eps}{\boldsymbol{\epsilon}}
\newcommand{\C}{\boldsymbol{c}}
\newcommand{\Ap}{\mathrm{\bf Ap}}
\newcommand{\wt}{\hspace{0.1cm}\widetilde{\wedge}\hspace{0.1cm}}
\newcommand{\wh}[1]{\widehat{#1}}
\newcommand{\bei}[1]{\bs{\beta^{(#1)}}}
\newcommand{\td}[2]{\widetilde\Delta_{#1}^S(#2)}
\newcommand{\ds}[2]{\Delta_{#1}^S(#2)}

\newcommand{\tdE}[2]{\widetilde\Delta_{#1}^E(#2)}

\newcommand{\rc}{\color{red}}  
\newcommand{\ec}{\color{black}} 
\newcommand{\bc}{\color{blue}}
\newcommand{\mc}{\color{magenta}}

\newcommand{\N}{\mathbb{N}}

\def\u{\underline}
\def\ee{{\underline{e}}}
\def\OO{{\mathcal O}}
\def\CC{{\mathcal C}}
\def\w{\widetilde}
\def\k{\overline}






\topmargin 0.0cm
\oddsidemargin 0.2cm
\textwidth 16cm 
\textheight 21cm
\footskip 1.0cm
{\null\vfill\begin{center}%
		\bfseries Acknowledgements\end{center}}%
{\vfill\null}
\setlength\parindent{0pt}

\newcommand{\keywords}[1]{\emph{Keywords:} #1}
\newcommand{\MSC}[1]{\emph{Mathematics Subject Classification 2020:} #1}

\newcounter{lastnote}

\begin{document}

\title{The value semigroup of a plane curve singularity with several branches} 

\author{M. D'Anna\thanks{Università degli studi di Catania, Dipartimento di Matematica e Informatica, Catania \emph{e-mail: marco.danna@unict.it}},  F. Delgado\thanks{Universidad de Valladolid. IMUVa (Institut of research in Mathematics). \emph{fdelgado@uva.es}}, L. Guerrieri\thanks{Jagiellonian University, Instytut Matematyki, 30-348 Krak\'{o}w \emph{e-mail: lorenzo.guerrieri@uj.edu.pl}}, N. Maugeri\thanks{Università degli studi di Catania, Dipartimento di Matematica e Informatica, Catania \emph{e-mail: nicola.maugeri@unict.it}}, V. Micale\thanks{Università degli studi di Catania, Dipartimento di Matematica e Informatica, Catania \emph{e-mail: vmicale@dmi.unict.it}
}
}

\date{}

\linespread{1,0}
\maketitle
\begin{abstract}
\noindent  We present a constructive procedure, based on the notion of Ap\'ery set, to 
obtain the value semigroup of a plane curve singularity from the value semigroup of its blow-up and vice-versa. 
In particular we give a blow-down process that allows to reconstruct a plane algebroid curve form its blow-up, even if it is not local. Then we characterize numerically all the possible multiplicity trees of plane curve singularities,
obtaining in this way a constructive description of all their value semigroups.
\end{abstract}
\keywords{Good semigroup, Ap\'{e}ry set,  plane curve singularity, multiplicity tree}\\
\MSC{14H20, 14H50, 13H15,  20M14}


\maketitle
\section{Introduction} 

An algebroid branch is a ring of the form $\mathcal O= K[[X_1, \dots, X_n]]/P$, where $K$ is an algebraically closed field and $P$ is an height $n-1$ prime ideal. Algebroid branches naturally appear in the study of curve singularities,
as completions of the local rings associated to a singular point of an algebraic curve, with one branch in that point. Since Zariski \cite{zariski}, a classical tool to study and classify singularities is given by the value semigroup
associated to an algebroid branch: in fact, the integral closure $\overline {\mathcal O}$ of ${\mathcal O}$ in its quotient field is a DVR isomorphic to $K[[t]]$. Hence every nonzero element $g\in \mathcal O$ has
a value $v(g):= ord_t(g) \in \N$ and the set of values of its elements constitute a
numerical semigroup $v(\mathcal O)=S$, i.e., a submonoid of $\N$ with
finite complement in it. 
The knowledge of the value semigroup gives many information on the ring $\mathcal O$;
for example its smallest nonzero value is the multiplicity $e(\mathcal O)$ of the singularity, and from the value semigroup one can easily compute the degree of singularity (i.e. the length $l_{\mathcal O} (\overline {\mathcal O}/\mathcal O)$), or one can check the Gorenstein and the complete intersection properties.

Another classical invariant to classify a branch singularity is given by the
sequence of multiplicities of the successive blow-ups of $\mathcal O$,
$(e(\mathcal O),e(\mathcal O'),e(\mathcal O^{(2)}),\ldots)$ (see e.g. \cite{zariski}).
Two algebroid branches are said formally equivalent it they share the same sequence of multiplicities; 
in general, the value semigroups and the multiplicity sequence are independent criteria of equisingularity.

If we want to consider a curve singularity with $d$ branches, we have to deal with algebroid curves, i.e. rings of the form
$\mathcal O=K[[X_1, \dots,X_n]]/P_1\cap \dots \cap P_d$, where the $P_i$ are pairwise distinct prime ideals of heigth $n-1$ and determine the branches. In this case the integral closure of $\mathcal O$ in its total ring of fractions is a product of DVR's,
$\overline {\mathcal O} \cong K[[t_1]]\times\cdots\times K[[t_d]]$, 
where $K[[t_i]]$ is the integral closure $\overline{\CC}_i$  of the i-th branch $\CC_i := K[[X_1,...,X_n]]/P_i$ 
and the set of
values $S=v(\mathcal O)$ is a submonoid of $\mathbb N^d$ (here $v(g)=(v_1(g), \ldots, v_d(g)) \in \mathbb N^d$, where $v_i$ is the valuation of the i-th branch). The projections $S_i$
of $S$ on the coordinate axes are the value semigroups of each
branch.

Again, as for the one branch case,
the value semigroup gives many informations on the singularity.
But, while any numerical semigroup is the value semigroup of a one branch singularity, there is no characterization of the semigroups appearing as value semigroups 
of algebroid curves with $d > 1$ branches.\\


In this article we consider the case of plane curve singularities. 
When we have only one branch,
there are classical
characterizations for the possible 
numerical semigroups that are value semigroups of an algebroid plane branch (that now is a ring of the form
$\mathcal O=K[[X,Y]]/(F)$, with $F$ irreducible). Moreover, it is well-known that the value semigroup and the multiplicity sequence of an algebroid branch  become two equivalent criteria of equisingularity (see \cite{zariski})
and in fact it is possible to reconstruct the multiplicity sequence from the value semigroup and vice-versa. 
More precisely, in \cite {aperyold}, Ap\'ery considered a particular generating set of $v(\mathcal O)$,
called the Ap\'ery set, and showed that one can compute the Ap\'ery set of the value semigroup $v(\mathcal B(\mathcal O))$ of the blowup of $\mathcal O$ from that of $v(\mathcal O)$,
and vice-versa.
This is the reason why, for plane branches, the value semigroup
 and the multiplicity sequence are two equivalent sets of invariants.
In \cite{planealgebroid} it has been shown how to use Apery's result to easily obtain the
value semigroup from the multiplicity sequence and vice-versa.
It is worth noticing that, if we instead
consider plane analytic branches, these invariants (value semigroup or multiplicity sequence) determine the
topological class of the branch (see again \cite{zariski}).

If we want to study a plane curve singularity with $d >1$ branches, we have to deal with plane algebroid curves, i.e. rings of the form
$\mathcal O=K[[X,Y]]/(H_1\cdots H_d)$, where the $H_i$ are irreducible and pairwise coprime. 
In this case two plane algebroid curves $\mathcal O=K[[X,Y]]/(H_1\cdots H_d)$
and $\mathcal Q=K[[X,Y]]/(G_1\cdots G_d)$ are formally
equivalent if  (after a renumbering of the branches)
the branches $\CC_i=K[[X,Y]]/(H_i)$ and $\mathcal D_i=K[[X,Y]]/(G_i)$ have the same multiplicity sequence for $i=1,\ldots,d$ and if
the intersection multiplicities 
$[\CC_i,\CC_j]:=l_{\mathcal O}(K[[X,Y]]/(H_i,H_j))$ and
$[\mathcal D_i,\mathcal D_j]:=l_{\mathcal Q}(K[[X,Y]]/(G_i,G_j))$  (where $l$ denotes the lenght of a module over a ring) are the same for all pairs
$(i,j), \ i\ne j$. Waldi has shown in \cite{waldi} that two plane algebroid
curves are formally equivalent if and only if they have the same value
semigroup.

Hence it is natural to ask whether it is possible to characterize the value semigroup of a plane singularity with more than one branch and to investigate how to reconstruct it by the multiplicity sequences and the intersection multiplicities of its branches and vice-versa.

The problem of the computation of the semigroup of values for $d>1$ (and as a
consequence of its characterization in some terms) from the semigroups of each
branch together with the intersection multiplicities between pair of branches was
resolved in
\cite{felix} following the next inductive way.
Assume that one knows the semigroups of less than $d$ branches, i.e. the semigroups
$S_J$ of the proper subset of branches corresponding to $J\subset
\{1,\ldots,d\}$, $\# J<d$. Then one can compute $S$ from the subsemigroups
$\{S_J \,| \, \#J = d-1\}$ and a finite set of elements
$B=\{\beta^1, \ldots, \beta^m\}\subset S$ (the generalization of maximal contact values, i.e.
of the minimal set of generators of the case $d=1$). The set $B$ can be computed
explicitly from the semigroups $S_i$, $i=1,\ldots, d$ and the intersection
multiplicities of pairs of branches. It must be noticed that this way was made for the
case $d=2$ by Garc\'{\i}a in \cite{garcia} and Bayer in \cite{bayer}.\\

However the above description is not easy, among other things demands inductively the
computation of the projections $S_J$; moreover it is not established in terms of the
resolution process, which is a very natural way to understand the plane curve
singularities.

This different approach to the problem was addressed and solved in \cite{apery} for the two
branches case and for characteristic $0$.
In that paper the authors use two main tools: 
firstly they show how to encode the data that determine formal equivalence in a tree, that they call multiplicity tree; secondly they define the Ap\'ery set of the value semigroup 
(which is now an infinite set)
and make a partition of it in "levels", describing them as value sets of particular elements of the algebroid curve. Then they show that, in case $\mathcal O$ and its blow-up $\mathcal B(\mathcal O)$ are both local,
the levels of the Ap\'ery sets of their value semigroups can be obtained one from the other.
Using these tools and a result of Garcia \cite{garcia} (that holds only in the two branch case), they show how to obtain the value semigroup from the multiplicity tree and vice-versa; 
this fact, together with a numerical 
description of the admissible multiplicity trees, gives a constructive characterization of the value semigroups of a plane singularity with two branches.\\

The aim of this paper is to generalize this approach to any number of branches, without restrictions on the characteristic. 
There are two main problems that arise. The first one is the fact that the definition of the partition of the Ap\'ery set given in \cite{apery} does not work in more than two branches and in the non local case. This problem has been addressed and solved in \cite{DGM}, \cite{GMM} and \cite{GMM2}, where a new 
definition of the levels of the Ap\'ery set, that works well in general, has been given;
moreover, in \cite{GMM2} the authors show that this new definition agree with the old one in the two branch local case.

The second problem derives from the fact that blowing up the algebroid curve, at some point (i.e. when at least two branches have different tangents) the blow-up is no more local. Our aim is to 
obtain a procedure to obtain the Ap\'ery set of $v(\mathcal O)$ from the Ap\'ery set the value semigroup $v(\mathcal B(\mathcal O))$ of its blow-up 
and vice-versa; to do this we can make use of the new definition of levels of the Ap\'ery set that holds also in the non local case.
Moreover, we also need to show, for any number of branches, that the levels of the Ap\'ery set 
can be obtained as value sets of particular sets
of elements of $\mathcal B(\mathcal O)$, also in the non local case. 
And since $\mathcal B(\mathcal O)$ is not local, we cannot anymore present it as a quotient
of $K[[X,Y]]$, as it was done in \cite{apery}.

Hence our main task is to prove 
Theorems \ref{semilocalring1part} and \ref{semilocalring2part}, where we show
in the general case (i.e. for any number of branches, in the semilocal case and with no restrictions on the characteristic)
how to describe the levels of the Ap\'ery set.
After doing that, we can give the searched procedure (see Theorem \ref{final}). In order to obtain it, we prove at ring level a procedure that, starting 
by a product $\mathcal V$ of local rings of plane algebroid curves, produces 
a local ring $\mathcal U$ of a plane algebroid curve, such that $\mathcal B(\mathcal U)=\mathcal V$ (Proposition \ref{blowup}). So, we have a sort of blow-down process that reverses the blow-up: in fact, if we start by a plane algebroid curve $\mathcal O$, we blow it up and then blow-down $\mathcal B(\mathcal O)$, we get again $\mathcal O$ (Proposition \ref{U=O}).

Now, in order to obtain a constructive characterization of the value semigroup of a plane curve singularity, it remains to characterize numerically the admissible multiplicity trees of a curve singularity with any number of branches; 
this is classically known for the one-branch case, 
it was done for the two branches case 
and characteristic $0$ in
\cite{apery}, and here it is generalized for any number of branches
 without restriction on the characteristic (see Proposition \ref{multtree}).
Using this last result, we can summarize in Theorem \ref{constructS} the equivalence of the following sets of data:
\begin{enumerate}
\item
the semigroup of values $S$ of $\mathcal O$; 
\item
The multiplicity tree ${\mathcal T}(R)$ of $R$;
\item the set $E = \{\underline{e}^i = (e^i_0, e^i_1, \ldots); i=1,\ldots, d\}$ of the multiplicity
sequences of the branches $\{\CC_i | 1\le i \le d\}$ plus the splitting numbers
$\{k_{i,j}\}$ between pairs of branches $C_i$, $C_j$; $1\le i < j\le d$.
\end{enumerate}

We now briefly describe the structure of the paper.
Section 2 is devoted to the basic definition about good semigroups; in particular in Definition 2.1 we recall the partition of the Ap\'ery set in levels, fixing the notation in a more convenient way with respect to previous papers. Then we
show that this partition works well 
when both $\mathcal O$ and $\mathcal B(\mathcal O)$ are both local, 
generalizing the arguments of \cite{apery} (see Propositions \ref{discussion}, \ref{3.8apery} and Theorem \ref{4.1apery}).

Section 3 is very technical and contains some new results on the Ap\'ery set, when the semigroup is not local. These results will allow us to find particular elements in the Ap\'ery set, keeping the control on the levels (see e.g Remark \ref{chainofnodes} and Lemmas \ref{subspaceslemma} and \ref{congruencelemma}). 

In Section 4 we extend \cite[Theorem 4.1]{apery} to the case where the blow-up of the coordinate ring of a plane curve is not local. In the first part of the section we describe the level of the Ap\'ery set of the value semigroup of a semilocal ring $R$ as sets of values of specific subsets of $R$ (see Theorems \ref{semilocalring1part} and \ref{semilocalring2part}). In the second part, we describe the blow-down process (Proposition \ref{blowup}) and how the levels of the Ap\'ery set of the value semigroup behave when passing from the ring of a plane curve to its blow-up and vice-versa (Theorem \ref{final}).

Finally, in Section 5 we give a characterization of the admissible multiplicity trees of a plane singularity for any number of branches and independently of the characteristic. To this aim we have to recall the Hamburger-Noether expansion in the one branch case and,
using it, we can generalize the results for the two branches case proved in \cite{apery} for characteristic zero. With an inductive argument we can give the requested characterization for any number of branches (Proposition \ref{multtree}), that leads to Theorem \ref{constructS} and to a constructive characterization of the admissible value semigroups of a plane curve singularity.

\section{Preliminaries on algebroid curves}

To work with value semigroups of algebroid curves we will use the more general concept of good semigroup, introduced in \cite{a-u}. 
Let $\le$ denote the standard componentwise partial ordering in $\mathbb N^d$.
Given two elements $\al=(\alpha_1, \alpha_2,\ldots,\alpha_d), \be=(\beta_1, \beta_2,\ldots,\beta_d) \in \N^d$,
the element $\de$ such that $\delta_i= \min(\alpha_i, \beta_i)$ for every $i=1, \ldots, d$ is called the the infimum of the set $\{\boldsymbol{\alpha},\boldsymbol{\beta}\}$ and will be denoted by $\bs{\alpha}\wedge \bs{\beta}$.

Let $S$ be a submonoid of $(\mathbb N^d,+)$. We say that $S$ is a \emph{good semigroup} if

\begin{itemize}
	\item[(G1)] For every $\boldsymbol{\alpha},\boldsymbol{\beta}\in S$, $\boldsymbol{\alpha}\wedge \boldsymbol{\beta}\in S$;
	\item[(G2)] Given two elements $\boldsymbol{\alpha},\boldsymbol{\beta}\in S$ such that $\al\neq \be$ and $\alpha_i=\beta_i$ for some $i\in\{1,\ldots,d\}$, then there exists $\bs{\epsilon}\in S$ such that $\epsilon_i>\alpha_i=\beta_i$ and $\epsilon_j\geq \min\{\alpha_j,\beta_j\}$ for each $j\neq i$ (and if $\alpha_j\neq \beta_j$ the equality holds).
	\item[(G3)] There exists an element $\boldsymbol{c}\in S$ such that $\boldsymbol{c}+\mathbb N^d\subseteq S$.
\end{itemize}

A good semigroup is said to be \emph{local} if $\boldsymbol{0}=(0,\ldots,0)$ is its
only element with a zero component. 

By (G1) it is always possible to define the element $\boldsymbol{c}:=\min\{\boldsymbol{\alpha}\in \mathbb Z^d\mid \boldsymbol{\alpha}+\mathbb N^d\subseteq S\}$; this element is called \emph{conductor} of $S$.
We set $\boldsymbol{\gamma}:=\boldsymbol{c}-\textbf{1}$.

A subset $E \subseteq \N^d$ is a \it relative ideal \rm of $S$ if $E+S \subseteq E$ and there exists $\al \in S$ such that $\al + E \subseteq S$. A relative ideal $E$ contained in $S$ is simply called an ideal. An ideal $E$
satisfying properties (G1), (G2) is called a \it good ideal \rm (notice that all ideals satisfy (G3) by definition). The minimal element $\boldsymbol{c}_E$ such that $\boldsymbol{c}_E + \N^d \subseteq E$ is called the \it conductor \rm of $E$. As for $S$, we set $\boldsymbol{\gamma}_E:=\boldsymbol{c}_E-\textbf{1}$.

We denote by $\bs{e}=(e_1,e_2,\ldots,e_d)$, the minimal element of $S$ such that $e_i>0$ for all  $i\in \{1,\ldots,d\}$.
The set $\e + S$ is a good ideal of $S$ and its conductor is $\boldsymbol{c} + \e$. Similarly for every $\om \in S$, the principal good ideal $E= \om + S$ has conductor $\boldsymbol{c}_E = \boldsymbol{c} + \om$.

\medskip

Let $\mathcal{O}$ be an algebroid curve with $d$ branches. The value semigroup $S=v(\mathcal{O})$ is a local good semigroup contained in $\N^d$ \cite{a-u}.  In this case, the sum of the coordinates of the element $\e$ is the multiplicity of the curve. Non-local good semigroups may appear as value semigroups of semilocal rings obtained from algebroid curves after blow-ups. General results on good semigroups and value semigroups of curve singularities appear in many papers,
e.g. \cite{a-u}, \cite{c-d-gz}, \cite{C-D-K}, 
\cite{danna1}, \cite{DGM}, \cite{DGM2}, 
\cite{felix}, \cite{delgado}, \cite{garcia},
\cite{GMM}, \cite{GMM2}, \cite{waldi}.

Given a non-zerodivisor  $x \in \mathcal{O}$, set $\om=(\omega_1, \ldots, \omega_d)= v(x)$ and consider the good ideal $E= \om +S$. The set $\Ap(S,\om)= S \setminus E$ is called the Ap\'{e}ry set of $S$ with respect to $\om$. Often we will consider the case $\om  = \e$, and then we simply write $\Ap(S)= \Ap(S, \e)$. 
This set has useful applications in the study of the quotient ring $\mathcal{O}/(x)$. In the case of algebroid branches, $\om \in \N$ and $\Ap(S,\om)$ is a finite set of cardinality $\om$. Ap\'ery sets of numerical semigroups and they properties are very well-known. For an extensive treatment of numerical semigroups and semigroup rings the reader may consult the monography \cite{libroassidannapedro}. 
In the case $d \geq 2$, $\Ap(S,\om)$ is infinite, but it can be canonically partitioned in $N= \omega_1+\cdots+\omega_d$ sets, as proved in \cite[Theorem 4.4]{GMM}.

We recall the definition of this partition, which can be defined analogously for any set $A \subseteq S$ that is the complement of some proper good ideal. For this we need to recall several technical definitions that allow us to work combinatorially on a good semigroup.

Given a set $U \subseteq \lbrace 1, \ldots,d \rbrace$ and an element $\al \in \mathbb{N}^d$, we define the following sets:
\begin{eqnarray*}
    \Delta^S_U(\al)&=&\{\be\in S \hspace{0.1cm}|\hspace{0.1cm} \beta_i=\alpha_i \text{ for } i\in U \text{ and } \beta_j>\alpha_j \text{ for } j\notin U\}.\\
    \widetilde{\Delta}^S_U(\al)&=&\{\be\in S \hspace{0.1cm}|\hspace{0.1cm} \beta_i=\alpha_i \text{ for } i\in U \text{ and } \beta_j\geq\alpha_j \text{ for } j\notin U\}\setminus\{\al\}.\\
    \Delta^S_i(\al)&=&\{\be\in S\hspace{0.1cm}|\hspace{0.1cm} \beta_i=\alpha_i \mbox{ and } \beta_j>\alpha_j \mbox{ for } j\neq i\}.\\
    \Delta^S(\al)&=&\bigcup_{i=1}^d\Delta_i^S(\al).
\end{eqnarray*}
In particular, for $S=\N^d$, we set $\Delta_U(\al):=\Delta_U^{\N^d}(\al)$ and $\widetilde{\Delta}_U(\al):=\widetilde{\Delta}_U^{\N^d}(\al)$.
   In general, we denote by $\widehat{U}$ the set $\lbrace 1, \ldots, d \rbrace \setminus U$.

Given any subset $A \subseteq S$, we say that an element $\al\in A$ is a \it complete infimum \rm in $A$ if there exist $\be^{(1)},\ldots,\be^{(r)}\in A$, with $r \geq 2$, satisfying the following properties:
\begin{enumerate}
\item $\be^{(j)} \in \ds{F_j}{\al}$ for some non-empty set $F_j \subsetneq \{1,\ldots,d\}$.
\item For every distinct $j, k \in \lbrace 1, \ldots, r \rbrace$,  $\al = \be^{(j)} \wedge \be^{(k)} $. 
\item $\bigcap_{k=1}^r {F_k}= \emptyset$.
    \end{enumerate}
In this case we write $\al=\be^{(1)}\wt\be^{(2)}\cdots\wt\be^{(r)}$.

Furthermore, given $\al=(\alpha_1,\alpha_2,\ldots, \alpha_d)$ and $\be=(\beta_1,\beta_2,\ldots,\beta_d)$ in $\mathbb{N}^d$, we say that  $\al  \leq \leq  \be$ if and only if either $\al = \be$ or $\alpha_i<\beta_i$ for every $i\in\{1,\ldots, d\}$. In the second case we say that $\be$ \it dominates \rm $\al$ and use the notation $\al \ll \be$.

\medskip

The partition of $\Ap(S, \om)$ is defined in the following way.

\begin{definition} \rm \label{deflivelli}
Let $A= \Ap(S, \om)$. Set:
$$ B^{(1)}:=\{\al \in A : \al \  \mbox{is maximal with respect to} \le\le\},$$
$$C^{(1)}:= \{ \al \in B^{(1)} : \al=\be^{(1)}\wt\cdots\wt\be^{(r)} \mbox{ for } 1<r\leq d\mbox{ and } \bei{k} \in B^{(1)}\},$$
$$ D^{(1)}:=B^{(1)}\setminus C^{(1)}.$$
For $i>1$ assume that $D^{(1)},\dots , D^{(i-1)}$ have been defined and set inductively:
$$ B^{(i)}:=\{\al \in A \setminus (\bigcup_{j < i} D^{(j)}) : \al \  \mbox{is maximal with respect to} \le\le\},$$
$$C^{(i)}:= \{ \al \in B^{(i)} : \al=\be^{(1)}\wt\cdots\wt\be^{(r)} \mbox{ for } 1<r\leq d\mbox{ and } \bei{k} \in B^{(i)}\},$$
$$D^{(i)}:=B^{(i)}\setminus C^{(i)}.$$ 
By construction $D^{(i)}\cap D^{(j)}=\emptyset$, for any $i\neq j$ and, 
since the set $S \setminus A = \om +S$ has a conductor, there exists $N \in \mathbb N_+$ such that $A=\bigcup_{i=1}^N D^{(i)}$.
As in \cite{GMM} we prefer to enumerate the sets in this partition in increasing order
setting $A_i:= D^{(N-i)}$. Hence 
$A=\bigcup_{i=0}^{N-1} A_i.$ 
We call the sets $A_i$ the \it levels \rm of $A$. 
\end{definition}

Notice that in the previous works \cite{GMM}, \cite{GMM2} the levels are enumerated from $1$ to $N$. In this paper we prefer to shift them and start from $0$ in order to adapt our notation to the one in \cite{apery}.


In \cite[Theorem 4.4]{GMM} it is proved that the number of levels of the Ap\'ery set $\Ap(S, \om)$ is equal to $\sum_{i=1}^d \omega_i$.

We recall that, if $\al, \be \in A$, $\al \ll \be$ and $\al \in A_i$, then $\be \in A_j$ for some $j>i$.  Moreover, the last level of the partition is $A_{N-1}=\Delta(\ga_E)=\Delta^S(\ga_E)$ (here $E=\om +S$).
If $S$ is local then $A_0=\{\boldsymbol {0}\}$. \\
Other basic properties of the Apéry set and its partition in levels are listed in \cite[Lemma 2.3]{GMM}.

In \cite{apery}, it is defined a slightly different partition in levels for the Ap\'ery set, only in the case of plane algebroid curves with two branches.
However, it is proved in \cite[Proposition 5.1]{GMM2} that in the case of Ap\'ery sets of plane algebroid curves the partition in \cite{apery} coincides with the one given in Definition \ref{deflivelli}. For this reason, since in this article we deal with plane curves, the results in \cite{apery} can be used as starting point of the inductive arguments in our work, even if we work with a partition in levels defined in a different way.



In the introduction of \cite{apery} it is mentioned that all the results in that paper until Theorem 4.1 can be proved analogously for arbitrary $d \geq 2$. We discuss this fact more specifically, showing first a way to present a plane algebroid curve as a finite module over a power series ring in one variable. The following extends the content of \cite[Discussion, page 6]{apery} and is independent of the characteristic of the base field.


\begin{prop}
    \label{discussion}
    Let $\mathcal{O}=K[[X,Y]]/I$ be an algebroid plane curve with $d$ branches. 
    Then, we can always write
    $$ \mathcal{O}=K[[x]]+K[[x]]y+K[[x]]y^2+\cdots+K[[x]]y^{e-1} $$
    where $v(x)=(e_1,\dots,e_d)=\min(v(\mathcal{O})\setminus\{(0,\dots,0)\})$, $e_1+\cdots+e_d=e$.
\end{prop}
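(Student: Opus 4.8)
My plan is to put the defining equation of $\mathcal{O}$ in Weierstrass form with respect to one of the two variables, read off the module decomposition from Weierstrass division, and then identify $v(x)$ with $\min(v(\mathcal{O})\setminus\{\boldsymbol 0\})$ by a rank count. First I would write $I=(H)$ with $H=H_1\cdots H_d$, the $H_i$ irreducible and pairwise coprime, and set $e:=\operatorname{ord}(H)$ and $e_i:=\operatorname{ord}(H_i)$, so that $e=e_1+\cdots+e_d$ is the multiplicity $e(\mathcal{O})$ of the curve (these will turn out to be the quantities in the statement). Since $K$ is infinite, the finitely many lines of the tangent cone $\{\operatorname{LF}(H)=0\}$ do not exhaust all directions, so after a generic $K$-linear automorphism of $K[[X,Y]]$ — which only replaces $\mathcal{O}$ by an isomorphic ring, hence changes neither the value semigroup nor the assertion — I may assume the line $X=0$ is not a tangent line, i.e. $X\nmid\operatorname{LF}(H_i)$ for every $i$. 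In particular $H(0,Y)=cY^{e}+(\text{higher order})$ with $c\neq 0$, so $H$ is $Y$-regular of order $e$.

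Next I would apply the Weierstrass preparation theorem to obtain $H=uP$ with $u\in K[[X,Y]]^\times$ and $P=Y^{e}+a_1(X)Y^{e-1}+\cdots+a_e(X)\in K[[X]][Y]$ a Weierstrass polynomial; thus $(H)=(P)$ and $\mathcal{O}\cong K[[X,Y]]/(P)$. The Weierstrass division theorem then says that every element of $K[[X,Y]]$ is congruent modulo $(P)$ to a unique polynomial in $Y$ of degree $<e$ with coefficients in $K[[X]]$, so the composite $\bigoplus_{j=0}^{e-1}K[[X]]Y^{j}\hookrightarrow K[[X,Y]]\twoheadrightarrow\mathcal{O}$ is an isomorphism of $K[[X]]$-modules. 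Writing $x,y$ for the images of $X,Y$, monicity of $P$ in $Y$ gives $(P)\cap K[[X]]=0$, hence $K[[x]]\cong K[[X]]$ sits inside $\mathcal{O}$, and we obtain exactly $\mathcal{O}=K[[x]]+K[[x]]y+\cdots+K[[x]]y^{e-1}$, a free $K[[x]]$-module of rank $e$.

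It then remains to identify $v(x)$. Since $X=0$ meets every branch, $x$ is a non-zerodivisor, so $v(x)$ is a nonzero element of $S:=v(\mathcal{O})$. As $\mathcal{O}$ is local, $S$ is local, so every nonzero element of $S$ has all coordinates positive; hence $\e=\min(S\setminus\{\boldsymbol 0\})$ is well defined, $\e\le v(x)$ coordinatewise, and the sum of the coordinates of $\e$ equals $e(\mathcal{O})=e$. On the other hand, tensoring the inclusions $\mathcal{O}\subseteq\overline{\mathcal{O}}=\prod_{i=1}^dK[[t_i]]$ (with $\overline{\mathcal{O}}/\mathcal{O}$ of finite length) by $\operatorname{Frac}K[[x]]$ gives $e=\operatorname{rank}_{K[[x]]}\mathcal{O}=\sum_{i=1}^d\operatorname{rank}_{K[[x]]}K[[t_i]]=\sum_{i=1}^d v_i(x)$, the $i$-th summand being $[\operatorname{Frac}K[[t_i]]:\operatorname{Frac}K[[x]]]=v_i(x)$ (a totally ramified extension of complete discrete valuation fields). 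Comparing $\sum_i \e_i=e=\sum_i v_i(x)$ with $\e\le v(x)$ forces $v(x)=\e=\min(v(\mathcal{O})\setminus\{\boldsymbol 0\})$, with $e_1+\cdots+e_d=e$, as claimed; one can also check directly that $v_i(x)=\operatorname{ord}_Y H_i(0,Y)=e_i$.

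The only genuinely delicate step is the first one: arranging, by a single linear change of coordinates and with no hypothesis on the characteristic, that $X$ is transverse to all $d$ branches simultaneously. This is precisely what allows us to bypass the Puiseux/Newton expansions used in \cite{apery} and to keep the whole discussion characteristic-free; everything afterwards (Weierstrass preparation and division, and the rank computation pinning down $v(x)$) is standard.
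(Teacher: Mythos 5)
Your proof is correct and follows essentially the same route as the paper's: arrange by a linear change of coordinates that $x$ is transversal to every branch, apply Weierstrass preparation to reduce the defining equation to a Weierstrass polynomial of degree $e$ in $Y$, and read off the $K[[x]]$-module decomposition (the paper prepares each $H_i$ separately and multiplies, you prepare $H$ at once — the same computation). If anything your version is slightly more careful: the generic linear substitution avoids the small gap in the paper's specific choice $X\mapsto X+Y$ (which fails if some branch is tangent to $X=Y$), and the rank count identifying $v(x)$ with $\min(v(\mathcal{O})\setminus\{\boldsymbol{0}\})$ makes explicit what the paper only asserts.
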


\begin{proof}
    We can assume $I=(H_1\cdots H_d)$ with $H_1,\dots,H_d$ irreducible elements and pairwise coprime. Let us denote $\mathcal{O}$ also by $K[[x,y]]$, where $x=X+I$ and $y=Y+I$.
    If the $d$ branches defined by $H_1,\dots,H_d$ have all the same tangent, we can assume it is $Y=0$ and, according to Weierstrass' Preparation Theorem, we can assume that $H_j=Y^{e_j}+\sum_{i=0}^{e_j-1}a_i(X)Y^i$ where $e_j$ is the minimal power such that $H_j$ contains a pure power $a Y^{e_j}$, with $a\in K \setminus \{ 0 \}$, and $a_i(X)$ are all non-invertible power series in $K[[X]]$. Thus $H_1\cdots H_d=Y^{e}+\sum_{i=0}^{e-1}c_i(X)Y^i$ where $e=e_1+\cdots+e_d$ is the multiplicity of the curve and $c_i(X)$ are all non-invertible. 

If instead the tangents of the $d$-branches are not all the same, we can assume that at least one is $Y=0$ and, as above, $H_j=Y^{e_j}+\sum_{i=0}^{e_j-1}a_i(X)Y^i$ for each branch $H_j$ with tangent $Y=0$. Then, for each branch $H_k$ with tangent different from $Y=0$,  if we write it as $H_k(X+Y,Y)$ we get a term $Y^{e_k}$ where $e_k$ is the minimal degree of the nonzero terms of $H_k$. Hence, after applying the substitution $X=X+Y$  and Weierstrass' Preparation Theorem, we get again $H_1\cdots H_d=Y^{e}+\sum_{i=0}^{e-1}c_i(X)Y^i$ where $e=e_1+\cdots+e_d$ is the multiplicity of the curve and $c_i(X)$ are all non-invertible. 

It is clear that, in both cases, we can express $\mathcal{O}$ as a $K[[x]]$-module minimally generated by $1,y,y^2,\dots,y^{e-1}$, with $v(x)=(e_1,\dots,e_d)$ and $e_1+\cdots+e_d=e$.
\end{proof}

\begin{oss}\label{discussion2}
Let us keep the same notations of the previous proposition.
  Let $F,G\in\mathcal{O}$ be two elements such that $\mathcal{O}$ is a $K[[F]]$-module minimally ge\-nerated by $1,G,G^2,\dots,G^{N-1}$, with $N=n_1+\cdots+n_d$ and $v(F)=(n_1,\dots,n_d)$. Hence $\mathcal{O}\cong K[[X,Y]]/(\Phi)$, where $\Phi(X,Y)=Y^N+\sum_{i=0}^{N-1}b_i(X)Y^i$  
 comes from the relation of dependence of $G$ over $K[[F]]$ in degree $N$. Indeed, there is a surjective homomorphism $\varphi:K[[X,Y]]\rightarrow\mathcal{O}$, mapping $X$ to $F$ and $Y$ to $G$, whose kernel contains $(\Phi)$.  Now,   since $K[[X,Y]]$ is a $2$-dimensional UFD, $\ker\varphi$ has to be intersection of $d$ principal prime ideals $P_1,\dots,P_d$, hence $P_i=(H_i)$ and $\ker\varphi=(H_1\cdots H_d)$. Moreover, $H_1\cdots H_d$ divides $\Phi$, so it has to be of the form $Y^j+\psi(X,Y)$, with $j\le N$ and, since $\mathcal{O}$ is minimally generated by $1,G,G^2,\dots,G^{N-1}$ as $K[[F]]$-module, then $j=N$ and $(H_1\cdots H_d)=(\Phi)$. \ec
 
 Notice that the classes $x=X+I, y=Y+I\in\mathcal{O}$ always satisfy the condition requested for $F$ and $G$. Hence, by Proposition \ref{discussion}, we can always assume that $\mathcal{O}=K[[x]]+K[[x]]y+K[[x]]y^2+\cdots+K[[x]]y^{e-1}$, where $v(x)=(e_1,\dots,e_d)=\min(v(\mathcal{O}\setminus\{(0,\dots,0)\})$, $e_1+\cdots+e_d=e$.
 Moreover, up to replacing $y$ with $y+\alpha x$ (with $\alpha \in K$), we can choose $y$ in such a way that $v(y)=(r_1,\dots,r_d)$ with $r_i>e_i$ for those indices $i$ such that $H_i$ has tangent $Y=0$ and $r_j\ge e_j$ for the remaining indexes.
\end{oss}

 As consequences of Propostion \ref{discussion}, we can state the two following results generalizing (with the same identical proofs) Proposition 3.8 and Theorem 4.1 of \cite{apery}.
 
 Let $\mathcal{O}=K[[x]]+K[[x]]y+K[[x]]y^2+\cdots+K[[x]]y^{e-1}$ be a plane curve expressed as in Proposition \ref{discussion}.  The element $\e=(e_1, \ldots, e_d)$ is as usual the minimal element of $v(\mathcal{O})$ having all components distinct from zero. 
 Set $R_0= K$ and for $i = 1, \ldots, e-1$, 
$$R_i = K[[x]]+K[[x]]y + \cdots + K[[x]]y^{i}.$$
Similarly set $T_0 = K$ and for $i = 1, \ldots, e-1$
$$
T_i =  \left\lbrace y^{i} + \phi \, | \, \phi \in R_{i-1} \mbox{ and } v(y^i + \phi) \not \in  v(R_{i-1}) \ec  \right\rbrace. $$

 \begin{prop}
     \label{3.8apery}
Let $A_i$ denote the levels of $\Ap(v(\mathcal{O}))$. Then for $i=0, \ldots, e-1,$
$ A_i = v(T_i)$.
 \end{prop}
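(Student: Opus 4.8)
The plan is to prove the equality $A_i = v(T_i)$ for $i = 0, \ldots, e-1$ by induction on $i$, exploiting the layered structure $R_0 \subseteq R_1 \subseteq \cdots \subseteq R_{e-1} = \mathcal{O}$ coming from Proposition \ref{discussion}, together with the fact that $x$ is a non-zerodivisor of $\mathcal{O}$ with $v(x) = \e$. The base case $i=0$ is immediate: $R_0 = K$, $T_0 = K$, and $v(T_0) = \{\boldsymbol 0\}$, which equals $A_0$ since $S = v(\mathcal{O})$ is local. For the inductive step, the first thing I would observe is the disjoint-union decomposition $v(R_i) = v(R_{i-1}) \sqcup v(T_i)$: every element of $R_i$ is of the form $g + \phi y^i$ with $g \in R_{i-1}$, and either its value already lies in $v(R_{i-1})$ (in which case, using the valuation-theoretic trick that for algebroid curves any value in $v(R_{i-1})$ is realized and, by property (G2) / the good-semigroup structure, one can subtract off suitable multiples to fall back into $R_{i-1}$) or it contributes a genuinely new value, which after dividing out the unit factor of $\phi$ is exactly a value of an element of $T_i$. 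One must check carefully that $v(T_i) \cap v(R_{i-1}) = \emptyset$, which is built into the definition of $T_i$.

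Next I would identify $v(R_{e-1}) = v(\mathcal{O}) = S$ and, more importantly, relate the partial value sets $v(R_i)$ to the Apéry set. The key structural claim is that $v(R_i) \subseteq \Ap(S, \e) = \Ap(S)$ for $i \le e-1$ — indeed, multiplication by $x$ sends $y^j$ to $xy^j$, and since $\mathcal{O}$ is freely generated over $K[[x]]$ by $1, y, \ldots, y^{e-1}$, an element whose value lies in $\e + S$ is divisible by $x$ and hence its "reduced" representative drops to lower $y$-degree; so the elements of $R_i$ not divisible by $x$ give values in the Apéry set. Combined with the previous paragraph, this shows $\Ap(S) = \bigsqcup_{i=0}^{e-1} v(T_i)$, a partition into $e = e_1 + \cdots + e_d$ pieces — matching the number of levels of $\Ap(S)$ by \cite[Theorem 4.4]{GMM}. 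It then remains to show this partition is \emph{the} level partition of Definition \ref{deflivelli}, i.e. that $v(T_i)$ is precisely $A_i$ and not some permutation.

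To pin down the indexing, I would argue that the $v(R_i)$ form an increasing chain compatible with the domination order $\ll$ and the maximality condition defining the $B^{(i)}$: concretely, any element of $v(T_i)$ is maximal with respect to $\le\le$ among the values in $S$ not dominated from below by already-assigned higher levels, because increasing a $y^i$-term strictly in all coordinates forces the $y$-degree up past $i$, landing outside $R_i$ (or in $\e + S$). One shows by downward matching that $v(T_{e-1}) = \Delta^S(\ga_E)$ with $E = \e + S$ (the top level $A_{N-1} = A_{e-1}$), and then peels off levels inductively: removing $D^{(1)}, \ldots, D^{(j)}$ from the Apéry set corresponds exactly to removing $v(T_{e-1}), \ldots, v(T_{e-j})$. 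The complete-infimum bookkeeping in the definition of $C^{(i)}$ versus $D^{(i)}$ must be checked to be consistent with the $K[[x]]$-module filtration, but since the partition into the $v(T_i)$ has the right cardinality $e$ and the right top and bottom levels and respects $\ll$, it must coincide with the canonical one. I expect the main obstacle to be precisely this last identification — verifying that the naive filtration-by-$y$-degree partition agrees level-by-level with the intrinsically defined partition of Definition \ref{deflivelli}, rather than merely being a partition with the same number of blocks; this is where one genuinely needs the good-semigroup axioms (G1), (G2) and the behavior of complete infima, and it is the step where the argument of \cite[Proposition 3.8]{apery} is being transported and must be checked to survive verbatim in arbitrary characteristic and for $d > 2$.
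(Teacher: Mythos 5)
Your plan has two genuine problems. First, several of the intermediate claims are false as stated. The decomposition $v(R_i)=v(R_{i-1})\sqcup v(T_i)$ fails already for $\mathcal{O}=K[[t^2,t^3]]$ with $x=t^2$, $y=t^3$, $e=2$: here $x\in R_1$ and $v(x)=2$ lies neither in $v(R_0)=\{0\}$ nor in $v(T_1)=\{3\}$. For the same reason the claim $v(R_i)\subseteq\Ap(S)$ is false for every $i\ge 1$, since $v(x)=\e\in\e+S$. The auxiliary assertion that an element whose value lies in $\e+S$ must be divisible by $x$ is also false: in the same example $x+y$ has value $\e$, yet $(x+y)/x=1+t\notin\mathcal{O}$, so $x+y$ is an element of $R_1$ not divisible by $x$ whose value is not in the Ap\'ery set. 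What is true, and what the argument of \cite[Proposition 3.8]{apery} actually establishes, is the statement restricted to the Ap\'ery set, namely $\Ap(S)\cap v(R_i)=\bigsqcup_{k\le i}v(T_k)$; this is proved via Weierstrass preparation (an element of $R_i$ whose value avoids $\e+S$ has some coefficient $a_k(x)$ with nonzero constant term, hence factors as a unit of the local ring $\mathcal{O}$ times $y^k+\psi$ with $\psi\in R_{k-1}$), together with a separate argument that $v(T_i)\subseteq\Ap(S)$. Your claims can be repaired along these lines, but as written they do not hold.

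Second, and more seriously, the identification of the blocks $v(T_i)$ with the levels $A_i$ of Definition \ref{deflivelli} is exactly the content of the proposition, and your justification --- that a partition of $\Ap(S)$ into $e$ blocks which respects $\ll$ and has the correct top and bottom pieces ``must coincide with the canonical one'' --- is not a valid argument: these constraints do not determine the partition, and the maximality and complete-infimum conditions separating $C^{(i)}$ from $D^{(i)}$ play an essential role that a cardinality count cannot replace. You correctly flag this as the main obstacle, but you do not close it. For comparison, the paper supplies no proof at all at this point: it establishes Proposition \ref{discussion} (the presentation $\mathcal{O}=K[[x]]+K[[x]]y+\cdots+K[[x]]y^{e-1}$, valid in arbitrary characteristic and for any number of branches) and then asserts that the proof of \cite[Proposition 3.8]{apery} applies verbatim, using \cite[Proposition 5.1]{GMM2} to identify the level partition of \cite{apery} with the one of Definition \ref{deflivelli}. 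A self-contained proof would have to reproduce that argument in full; the steps you list do not yet do so.
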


 \begin{theorem}
     \label{4.1apery}
     Let $\mathcal{B}(\mathcal O)$ denote the blow-up of $\mathcal{O}$ and suppose $\mathcal{B}(\mathcal O)$ to be also local. Let $A'_i$ denote the levels of $\Ap(v(\mathcal{B}(\mathcal O)), \e)$. Then, for $i=0, \ldots, e-1,$ one has $A'_i= A_i -i\e$.
 \end{theorem}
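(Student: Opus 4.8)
The plan is to derive the statement from Proposition~\ref{3.8apery}, applied to \emph{both} $\mathcal{O}$ and $\mathcal{B}(\mathcal{O})$; this reproduces, for arbitrary $d$ and in any characteristic, the two-branch argument of \cite[Theorem~4.1]{apery}. First I would put $\mathcal{B}(\mathcal{O})$ into the shape required by Proposition~\ref{discussion}. Since $\mathcal{B}(\mathcal{O})$ is assumed local, the $d$ branches of $\mathcal{O}$ must all be tangent to a common line, which by Remark~\ref{discussion2} we may take to be $Y=0$; then $v_k(y)>e_k$ for every branch $k$, hence $z:=y/x$ lies in $\overline{\mathcal{O}}$ and $\mathcal{B}(\mathcal{O})=\mathcal{O}[y/x]=\mathcal{O}[z]$. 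Dividing the monic degree-$e$ relation of $y$ over $K[[x]]$ by $x^{e}$ produces a monic degree-$e$ relation of $z$ over $K[[x]]$ --- the hypothesis that $e$ is the multiplicity of $\mathcal{O}$ is exactly what makes the new coefficients stay in $K[[x]]$ --- so $\mathcal{B}(\mathcal{O})=K[[x]]+K[[x]]z+\cdots+K[[x]]z^{e-1}$ with $v(x)=\e$ unchanged. Thus $\mathcal{B}(\mathcal{O})$ has the form to which Proposition~\ref{3.8apery} applies, now with respect to the non-zerodivisor $x$; I would point out that the proof of that proposition uses only the module presentation coming from $x$ and not that $\e$ is the multiplicity element of $\mathcal{B}(\mathcal{O})$ (which in general it is not), and that $\Ap(v(\mathcal{B}(\mathcal{O})),\e)$ has $e_1+\cdots+e_d=e$ levels by \cite[Theorem~4.4]{GMM}, matching the index range $0\le i\le e-1$.

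Writing $R_i,T_i$ for $\mathcal{O}$ as in Proposition~\ref{3.8apery} and $R'_i,T'_i$ for the analogous objects built from $z$ in $\mathcal{B}(\mathcal{O})$, Proposition~\ref{3.8apery} gives $A_i=v(T_i)$ and $A'_i=v(T'_i)$ for $i=0,\dots,e-1$. The heart of the argument is the equality of value sets $v(T'_i)=v(T_i)-i\e$, which I would obtain through the identity
\[
T'_i=x^{-i}T_i\qquad(0\le i\le e-1).
\]
One first records the elementary inclusions $x^iR'_{i-1}\subseteq R_{i-1}\subseteq R'_{i-1}$, coming from $x^iz^j=x^{i-j}y^j$ (for $j<i$) and $y^j=x^jz^j$; in particular $x^i(z^i+\psi)=y^i+x^i\psi$ with $x^i\psi\in R_{i-1}$. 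Next one checks that if $y^i+\phi\in T_i$ then $\phi$ is divisible by $x^i$ in $\mathcal{B}(\mathcal{O})$ and $\phi/x^i\in R'_{i-1}$, using that belonging to $T_i$ bounds $v(\phi)$ below by $v(y^i)=i\,v(y)$ together with $v_k(y)>e_k$. Finally one verifies that the two ``escape conditions'' $v(y^i+\phi)\notin v(R_{i-1})$ and $v(z^i+\phi/x^i)\notin v(R'_{i-1})$ translate into one another under multiplication by $x^{\pm i}$; one direction is immediate from $x^iR'_{i-1}\subseteq R_{i-1}$, and the other uses again the lower bound on the value of an element of $T_i$.

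The main obstacle will be exactly this correspondence of escape conditions: it is a genuine, though routine, computation with the componentwise valuation and with the good-semigroup structure of $v(R_{i-1})$ and $v(R'_{i-1})$, carried out for $d=2$ in the proof of \cite[Theorem~4.1]{apery}. I would emphasize that nothing in it depends on $d$ being $2$ or on the characteristic of $K$: the only place where $d$ enters is through the module presentations, which are available for all $d$ by Proposition~\ref{discussion}, so the two-branch computation transfers verbatim.
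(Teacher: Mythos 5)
Your proof follows exactly the route the paper intends: the paper states Theorem \ref{4.1apery} with no written proof, asserting only that the argument of \cite[Theorem 4.1]{apery} carries over verbatim once Proposition \ref{discussion} provides the presentation $\mathcal{O}=K[[x]]+K[[x]]y+\cdots+K[[x]]y^{e-1}$, and your reconstruction (common tangent because the blow-up is local, descent of the monic degree-$e$ relation to $z=y/x$, Proposition \ref{3.8apery} applied to both rings, and the correspondence between $T_i$ and $x^{i}T'_i$) is precisely that argument. Your added observations --- that Proposition \ref{3.8apery} is being invoked for $\mathcal{B}(\mathcal{O})$ with respect to $\e=v(x)$ even though $\e$ need not be the multiplicity of $\mathcal{B}(\mathcal{O})$, and that both Ap\'ery sets have $e$ levels so the index ranges match --- are correct and are exactly the points one must check for the generalization to arbitrary $d$.
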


The aim of the next sections is to extend Theorem \ref{4.1apery} to the case where the blow-up of $\mathcal{O}$ is not local. In this case it is no more true that $\mathcal B(\mathcal O)$ can be presented as a quotient of $K[[X,Y]]$, so we cannot apply Proposition \ref{discussion} and Remark \ref{discussion2}. To proceed in this direction, we will need to consider the levels of the Ap\'ery set of non-local good semigroups.

\section{Preliminary results on good semigroups}

In this section we prove several technical results on good semigroups that will be needed in Section 4. The proofs often require the combinatoric methods developed in the previous works \cite{GMM}, \cite{GMM2}.
We start by recalling
the main result of Section 4 of \cite{GMM2}, restated with the new notation, renumbering the levels of the Ap\'ery set (or more in general of the complement of a good ideal) starting from $0$ rather than from $1$. 

Along the section $S \subseteq \N^d$ will denote an arbitrary good semigroup (not necessarily local) and 
$A = S \setminus E =\bigcup_{i=0}^{N-1} A_i$ the complement of a good ideal $E$, partitioned in levels as in Definition \ref{deflivelli}. If $S$ is numerical,
$A= \{w_0, \ldots, w_{N-1} \}$ is finite and we set $A_i= \{w_i\}$.


We define a \it level function \rm
$\lambda: S \to \{0, \ldots, N\}$ in the following way:
\begin{itemize}
\item If $\al \in A_i$, $\lambda(\al)= i$.
\item If $\al \not \in A$, 
$\lambda(\al)= 1+ \max\{i \mbox{ such that } \al > \te \mbox{ for some } \te \in A_i \}$.
\end{itemize}

\begin{theorem}
\label{thmnonlocallevels} \rm \cite[Theorem 4.5]{GMM2} \it
Let $S= S_1 \times S_2 $ be a direct product of two arbitrary good semigroups. Let $E \subsetneq S$ be a good ideal and set $A:=S \setminus E$. Then, 
given $\al = (\al^{(1)}, \al^{(2)}) \in A$ $(\al^{(i)} \in S_i$, for $i=1,2$), the level of $\al$ in $A$ is equal to
$$ \lambda(\al^{(1)}) + \lambda(\al^{(2)}).  $$
\end{theorem}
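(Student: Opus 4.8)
The plan is to reduce the general two-factor statement to the structure of levels on each factor, using the fact that the partition into levels is governed purely by the combinatorics of the complete-infimum relation together with the $\leq\leq$-ordering. First I would unwind the definitions on the product: an element $\al=(\al^{(1)},\al^{(2)})\in A=S\setminus E$ with $S=S_1\times S_2$ and $E$ a good ideal. Since $E$ is a good ideal of a product, I would first record the (presumably already available) description of $E$ and hence of $A$ in terms of the factors, so that membership $\al\in A$, the $\leq\leq$-maximality, and the complete-infimum decompositions can all be read off coordinate-by-coordinate. The key observation is that a set $F_j\subsetneq\{1,\dots,d\}$ witnessing a complete infimum either ``lives in the first factor'' (i.e. $F_j$ together with the complement splits compatibly with the block decomposition $\{1,\dots,d_1\}\sqcup\{d_1+1,\dots,d\}$) or in the second, or it mixes the two; and a mixing $F_j$ forces equality in whole blocks, which lets one replace it by the two block-restricted infima. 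So every complete-infimum decomposition on the product refines to decompositions on each factor, and conversely decompositions on the factors glue.

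Next I would prove the additivity of the level function by induction on the level, i.e. by induction on $N-\lambda(\al)$ downward from the top, or equivalently build the partition $\{A_i\}$ from $D^{(1)},D^{(2)},\dots$ and match it against the partitions of $A^{(1)}:=S_1\setminus E_1$ and $A^{(2)}:=S_2\setminus E_2$ (where $E_r$ are the induced good ideals, or the relevant projections). The base case concerns $B^{(1)}$, the $\leq\leq$-maximal elements of $A$: I would show $(\al^{(1)},\al^{(2)})$ is $\leq\leq$-maximal in $A$ iff $\al^{(1)}$ is $\leq\leq$-maximal in $A^{(1)}$ and $\al^{(2)}$ is $\leq\leq$-maximal in $A^{(2)}$, which is where property (G2) and the good-ideal structure of $E$ enter. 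Then using the refinement/gluing of complete-infimum decompositions above, I would show that $\al\in C^{(1)}$ (removed at the first step on the product) iff $\al^{(1)}\in C^{(1)}_{S_1}$ or $\al^{(2)}\in C^{(1)}_{S_2}$ — the point being that a complete infimum decomposition of $\al$ with all pieces maximal produces, in at least one factor, a genuine ($r\geq2$) decomposition with maximal pieces, because the $F_j$'s cannot all collapse to block-trivial sets. This gives $D^{(1)}_S = (D^{(1)}_{S_1}\times A^{(1)})\,\cup\,(A^{(1)}\times D^{(1)}_{S_2})$ restricted appropriately, and after peeling off $D^{(1)}$ the remaining set $A\setminus D^{(1)}$ is again the complement of a good ideal in $S$ whose factor ideals have had their top levels removed, so the inductive step applies.

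Carrying out the bookkeeping, one gets that $\lambda_S(\al)=\lambda_{S_1}(\al^{(1)})+\lambda_{S_2}(\al^{(2)})$ for $\al\in A$, and the extension of $\lambda$ to elements of $E$ (respectively $E_r$) via the ``$1+\max\{i : \al>\te,\ \te\in A_i\}$'' clause is then checked separately: for $\al\notin A$, writing $\al^{(r)}$ may or may not lie in $A^{(r)}$, and I would verify the formula by splitting into the cases according to which factors are ``below $E$'' and using that the dominance relation $\al>\te$ also factors. The main obstacle, I expect, is precisely the interaction between the complete-infimum structure and the product decomposition: showing that mixed index sets $F_j$ can always be traded for block-respecting ones without changing which elements get removed at each stage, and that no ``interference'' occurs whereby an element is $\leq\leq$-maximal in each factor individually but its decomposition data behaves differently on the product. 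Handling this cleanly will likely require a careful lemma (or appeal to the already-developed machinery of \cite{GMM}, \cite{GMM2}) stating that for $\al$ with both $\al^{(1)},\al^{(2)}\notin\{$top level$\}$, the complete-infimum decompositions of $\al$ in $S$ are in bijection (up to block-splitting) with pairs of such decompositions in $S_1$ and $S_2$ — once that is in hand, the level induction is essentially formal.
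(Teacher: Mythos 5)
First, a point of order: the paper does not prove this theorem at all. It is imported verbatim (with the levels renumbered to start from $0$) from \cite[Theorem 4.5]{GMM2}, so there is no internal proof to compare yours against; your proposal has to stand on its own as a reconstruction of that external argument. Your overall instinct --- that the statement should follow from a combinatorial analysis of how $\le\le$-maximality and complete infima interact with the product decomposition --- is indeed the right kind of argument, and it is roughly what \cite{GMM2} carries out over a full section with several preparatory lemmas.

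As written, though, your sketch has two concrete defects. The base case is wrong: since $E$ is (in the relevant situation) $E_1\times E_2$, one has $A=(A^{(1)}\times S_2)\cup(S_1\times A^{(2)})$, which is strictly larger than $A^{(1)}\times A^{(2)}$; consequently $B^{(1)}$ contains elements $(\al^{(1)},\al^{(2)})$ with, say, $\al^{(1)}\in E_1$ above the conductor of $E_1$ and $\al^{(2)}$ maximal in $A^{(2)}$. Your proposed equivalence ``$\le\le$-maximal in $A$ iff $\le\le$-maximal in each $A^{(i)}$'' is not even applicable to such elements, yet they are precisely the ones with $\lambda(\al^{(1)})=N_1$ that the additivity formula must handle (they make up most of the top levels; compare the $\infty$-entries in Example \ref{ex1}). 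For instance, with $S=\N\times\N^2$ and $E=\boldsymbol{1}+S$, the top level $A_2$ contains all $(a,(0,c))$ with $a,c\ge 1$, whose first coordinate lies in $E_1$. Second, the inductive mechanism ``peel off $D^{(1)}$ and the rest is again the complement of a good ideal'' fails: here $D^{(1)}=A_{N-1}=\Delta^S(\ga_E)$, and $E\cup\Delta^S(\ga_E)$ violates (G1) as soon as $d\ge 3$, because for $i\neq j$ the infimum of $\de\in\Delta^S_i(\ga_E)$ and $\eps\in\Delta^S_j(\ga_E)$ lands in $\Delta^S_{\{i,j\}}(\ga_E)$, which lies in neither set (in the example above, $(0,1,1)\wedge(1,0,1)=(0,0,1)$ is in the middle level). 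So you cannot re-invoke your base case after each peeling; the induction must be run on the partition itself, which is exactly what forces the delicate analysis of complete infima, consecutive elements, and mixed index sets $F_j$ that you defer as ``essentially formal.'' That deferred analysis is the actual content of the proof in \cite{GMM2}, and your sketch does not recover it.
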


We recall that two elements $\al, \be \in S$ are \it consecutive \rm if $\al < \be$ and there are no elements $\de \in S$ such that $\al < \de < \be$.
The function $\lambda$ has following property.

\begin{lemma}
    \label{lambda}
    Let $S$ be any good semigroup and let $\al \in S$. Let $E \subsetneq S$ be a good ideal and set $A:=S \setminus E$. Then for $j < N$, $\lambda(\al) \leq j$ if and only if there exists $\be \in A_j$ such that $\al \leq \be$. In particular, if $\te \in S$ and  $\al \leq \te$, then $\lambda(\al) \leq \lambda(\te)$.
\end{lemma}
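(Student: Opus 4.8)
The plan is to prove the stated equivalence first and then deduce the last sentence as a formal consequence. For the deduction: given $\al\le\te$ in $S$, put $j=\lambda(\te)$; if $j=N$ there is nothing to prove, and if $j<N$ the forward implication applied to $\te$ yields $\be\in A_j$ with $\te\le\be$, hence $\al\le\be\in A_j$, and the backward implication gives $\lambda(\al)\le j=\lambda(\te)$. So it remains to prove the equivalence, and here I would distinguish the cases $\al\in A$ and $\al\in E$.

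First I would treat $\al\in A$, say $\al\in A_i$, so that $\lambda(\al)=i$. For the forward implication it is enough to show that if $i\le N-2$ there is $\be\in A_{i+1}$ with $\al\le\be$, and then iterate up to level $j$. Recalling from Definition~\ref{deflivelli} that $A_i=D^{(N-i)}\subseteq B^{(N-i)}$ and $A_{i+1}=D^{(N-i-1)}=B^{(N-i-1)}\setminus C^{(N-i-1)}$, I would argue as follows. If $\al$ is not maximal with respect to $\leq\leq$ inside $A_0\cup\cdots\cup A_{i+1}$, pick $\de$ in that set with $\al\ll\de$; by the recalled monotonicity under $\ll$ we get $\lambda(\de)>i$, hence $\lambda(\de)=i+1$ and $\be:=\de$ works. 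If instead $\al$ is $\leq\leq$-maximal there, then $\al\in B^{(N-i-1)}$, and since $\al\notin A_{i+1}$ we get $\al\in C^{(N-i-1)}$, so $\al=\bei{1}\wt\cdots\wt\bei{r}$ with each $\bei{l}\in B^{(N-i-1)}\cap\ds{F_l}{\al}$. Not every $\bei{l}$ can lie in $B^{(N-i)}$, for otherwise $\al$ would again be a complete infimum of elements of $B^{(N-i)}$, i.e. $\al\in C^{(N-i)}$, contradicting $\al\in D^{(N-i)}$; any $\bei{l}$ that does not lie in $B^{(N-i)}$ must lie in $D^{(N-i-1)}=A_{i+1}$, and it satisfies $\bei{l}\ge\al$ because $\bei{l}\in\ds{F_l}{\al}$, so $\be:=\bei{l}$ works. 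For the backward implication, if $\be\in A_j$ and $\al\le\be$ I must show $i\le j$, i.e. that $\lambda$ is monotone on $A$ with respect to $\le$; this reduces to the already recalled monotonicity under $\ll$ by peeling off, with repeated use of (G1) and (G2), the coordinates along which $\al$ and $\be$ agree (alternatively one quotes the relevant item of \cite[Lemma 2.3]{GMM}).

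Then I would treat $\al\in E$, where by definition $\lambda(\al)=1+i^{*}$ with $i^{*}=\max\{\lambda(\te):\te\in A,\ \te<\al\}$. For the backward implication, if $\be\in A_j$ and $\al\le\be$, then every $\te\in A$ with $\te<\al$ satisfies $\te<\be$, hence $\lambda(\te)\le j$ by the case $\al\in A$ just treated; in fact $\lambda(\te)<j$, because no element of $E$ can lie strictly between two $\le$-comparable elements of one and the same level $A_j$ of $A$, so $i^{*}\le j-1$ and $\lambda(\al)\le j$. For the forward implication, if $\lambda(\al)\le j<N$ then $i^{*}\le j-1<N-1$, and starting from a $\te\in A_{i^{*}}$ with $\te<\al$ and climbing as in the first case one produces, using the same interval/convexity structure of the levels established in \cite{GMM},\cite{GMM2}, an element $\be\in A_j$ with $\al\le\be$.

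I expect the main obstacle to be precisely the case $\al\in E$, and within it the fact that an element of $E$ cannot separate two comparable elements belonging to the same level of $A$: this is what blocks a purely formal reduction to the case $\al\in A$ and forces one back to the combinatorial description of the levels in \cite{GMM} and \cite{GMM2}. By contrast, the forward implication for $\al\in A$ is the cleanest part, being a direct unwinding of the inductive construction of the $A_i$ in Definition~\ref{deflivelli}, while the backward implication for $\al\in A$ is routine modulo the axioms (G1)--(G2).
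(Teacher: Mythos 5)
Your treatment of three of the four implications is essentially sound: the upward step inside $A$ via the $B^{(i)}/C^{(i)}/D^{(i)}$ construction is a correct (and more self-contained) version of what the paper dismisses as ``straightforward'', and for $\al\in E$ your backward implication correctly isolates the key structural fact that an element of $E$ cannot lie strictly between two comparable elements of the same level of $A$ — this is exactly the appeal to \cite[Lemma 2.8]{GMM2} in the paper's own proof.

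The gap is in the forward implication for $\al\in E$, which is where the paper does its real work. Starting from $\te\in A_{i^*}$ with $\te<\al$ and ``climbing as in the first case'' produces elements of higher levels lying above $\te$, but nothing forces any of them to lie above $\al$: the dominating elements and the complete-infimum components $\bei{l}$ you obtain at each step are constrained only relative to the element you are climbing from, and they can drift away from $\al$ (which, being in $E$, cannot itself serve as a climbing base). The paper closes this with a genuine argument: it picks $\be\in A_h$ maximizing $\de=\al\wedge\be$, and, assuming $\de<\al$, manufactures another element of $A_h$ whose infimum with $\al$ is strictly larger, by rerunning Cases 1 and 2 of the proof of \cite[Proposition 2.10]{GMM2} and using $\al\in E$ together with \cite[Theorem 2.7]{GMM2} to exclude the degenerate configuration where $\de$ and $\be$ are consecutive. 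Some argument of this kind — one that tracks the infimum with $\al$ rather than mere membership in higher levels — is required; an appeal to ``the same interval/convexity structure of the levels'' does not supply it.
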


\begin{proof}
    If $\al \in A$ this is straightforward. Suppose $\al \in E$ and set $\lambda(\al)=h$. Let $\te \in A$ be a maximal element such that $\te < \al$. By definition of $\lambda$, $\te \in A_{h-1}$. Now, if there exists $\be \in A_j$ such that $\al \leq \be$, it follows that $j \geq h-1$. If $j \geq h$ we are done. If $j= h-1$, by \cite[Lemma 2.8]{GMM2} we get $\al \in A_{h-1}$ and this is a contradiction. 

    Conversely, if $\lambda(\al)=h=N$, then clearly $\te \in A_{N-1}$ and there are no elements of $A$ larger than or equal to $\al$.
    Thus we suppose $h < N$ and prove that we can find $\be \in A_h$ such that $\al \leq \be$. Clearly no elements of $A_h$ are smaller than $\al$.
Let $\be \in A_h$ be such that the element $\de= \al \wedge \be$ is the maximal possible. If $\de = \al $ we are done, hence suppose by way of contradiction that $\de < \al$. By the assumption $\lambda(\al)=h$, we also have $\de < \be$.
We can fix coordinates saying that $\al \in \Delta^S_U(\de)$ and $\be \in \Delta^S_V(\de)$ with $V \supseteq \widehat{U}$.
We need to produce an element $\te \in A_h$ such that $\te \wedge \al > \de$. We can do it proceeding exactly as in Case 1 and Case 2 of the proof of \cite[Proposition 2.10]{GMM2}, noticing that $\al \in E$ and therefore if $\de$ and $\be$ are consecutive, we cannot have $\de \in A$ by \cite[Theorem 2.7]{GMM2}.
 (for convenience of the reader we are adopting here the same notation of that proof, except for the fact that the index of the level of $\be$ is shifted by one). Since in this way we find a contradiction, we must have $\al \wedge \be = \al$ and $\be > \al$. 
\end{proof}

The next lemma proves the existence of ascending sequences of elements, one for each level, satisfying some extra condition on their respective positions. 

\begin{lemma}
    \label{lemmanodominore}
   Let $S $ be an arbitrary good semigroup. Let $E \subsetneq S$ be a good ideal and set $A:=S \setminus E$. Then for every $i > j \geq 0$ and $\al \in A_i$, there exists $\be \in A_{j}$ such that $\be < \al$ and, 
   if $\al \in \Delta^S_U(\be)$ then $\widetilde{\Delta}^S_{\widehat{U}}(\be) \subseteq A$.
\end{lemma}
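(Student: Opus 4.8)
The plan is to prove the statement by downward induction on $j$, the base case $j=i-1$ carrying the real content and the inductive step being a short composition. For the inductive step, assume the statement holds whenever the gap between the two indices is smaller than $i-j$, and let $\al\in A_i$. Applying the case $j=i-1$ we obtain $\be'\in A_{i-1}$ with $\be'<\al$ and, for $U'=\{k\mid \al_k=\be'_k\}$, $\widetilde{\Delta}^S_{\widehat{U'}}(\be')\subseteq A$. Applying the inductive hypothesis to $\be'\in A_{i-1}$ and the level $j<i-1$ we obtain $\be\in A_j$ with $\be<\be'$ (hence $\be<\al$) and $\widetilde{\Delta}^S_{\widehat U}(\be)\subseteq A$, where $U=\{k\mid\be'_k=\be_k\}$. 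Put $V=\{k\mid\al_k=\be_k\}$; since $\be\le\be'\le\al$, every coordinate on which $\al$ and $\be$ agree is one on which $\be'$ and $\be$ agree, so $V\subseteq U$, and then a direct inspection of the defining conditions shows $\widetilde{\Delta}^S_{\widehat V}(\be)\subseteq\widetilde{\Delta}^S_{\widehat U}(\be)\subseteq A$. Thus everything reduces to the case $j=i-1$.

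For the case $j=i-1$, first I would exhibit some $\be\in A_{i-1}$ with $\be<\al$: take a saturated chain $\boldsymbol{0}=\g^{(0)}<\g^{(1)}<\cdots<\g^{(m)}=\al$ in $S$; along it the level function $\lambda$ of Lemma \ref{lambda} is nondecreasing, equals $0$ at $\boldsymbol{0}$ (which lies in $A_0$) and $i$ at $\al$, and (by the combinatorial arguments behind the proof of Lemma \ref{lambda}; see also \cite[Proposition 2.10]{GMM2}) it increases by at most $1$ between consecutive elements of the chain; hence some $\g^{(t)}$ lies in $A_{i-1}$, and $\g^{(t)}<\al$ since $t<m$. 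Among all $\be\in A_{i-1}$ with $\be<\al$ I would then fix one whose agreement set $U=\{k\mid\be_k=\al_k\}$ is minimal with respect to inclusion. If $\be$ can be chosen with $\be\ll\al$, then $U=\emptyset$, $\widehat U=\{1,\dots,d\}$ and $\widetilde{\Delta}^S_{\widehat U}(\be)=\emptyset$, so there is nothing left to prove.

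The remaining point, and the main obstacle, is to show that this minimal choice forces $\widetilde{\Delta}^S_{\widehat U}(\be)\subseteq A$. Suppose not: there is $\g\in S\cap E$ with $\g_k=\be_k$ for $k\in\widehat U$, $\g_k\ge\be_k$ for $k\in U$, and $\g\ne\be$. One checks directly that $\al\wedge\g=\be$, that $\al$ and $\g$ are incomparable, and --- since $\g>\be\in A_{i-1}$ while $\g\in E$ --- that $\lambda(\g)\ge i$. The plan is then to exploit this configuration $\be=\al\wedge\g$ via property (G2) of $S$ together with the level machinery of \cite{GMM2}, in the style of the proof of Lemma \ref{lambda}: by suitable (G2)-moves one builds from $\g$ an element of $A_{i-1}$ lying strictly below $\al$ whose agreement set with $\al$ is strictly contained in $U$ --- contradicting minimality --- or, alternatively, an element of $A_i$ contained in $E$, which is absurd. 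Carrying out this step, controlling the levels while performing the moves in the non-local setting, is the heart of the argument and the place where the preparatory results of this section are really needed.
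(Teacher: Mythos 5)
Your reduction to the case $j=i-1$ is correct and matches the paper's own composition step (the inclusion $\widetilde{\Delta}^S_{\widehat V}(\be)\subseteq\widetilde{\Delta}^S_{\widehat U}(\be)$ for $V\subseteq U$ is exactly the monotonicity the paper uses), and your existence argument for some $\be\in A_{i-1}$ below $\al$ is acceptable, though the paper simply cites \cite[Proposition 2.10]{GMM2} rather than routing through a saturated chain. The problem is that the entire content of the lemma sits in the step you leave as a ``plan'': you never actually prove that your chosen $\be$ satisfies $\widetilde{\Delta}^S_{\widehat U}(\be)\subseteq A$. You set up the configuration $\be=\al\wedge\g$ with $\g\in E$, $\lambda(\g)\ge i$ correctly, but then only assert that ``suitable (G2)-moves'' will produce either a contradiction with minimality of $U$ or an element of $A_i$ inside $E$. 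That is precisely the heart of the argument, and as written it is a gap, not a proof; it is also not evident that your normalization (minimality of the agreement set $U$ among all $\be\in A_{i-1}$ below $\al$) is the right invariant to run such an argument against.

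For comparison, the paper's mechanism is different and much shorter: choose $\be\in A_{i-1}$ with $\be<\al$ and \emph{maximal}, in the sense that no other element of $A_{i-1}$ lies between $\be$ and $\al$; pick $\te\in S$ consecutive to $\be$ with $\be<\te\le\al$, so that $\te\in\Delta^S_H(\be)$ with $H\supseteq U$ and hence $\widetilde{\Delta}^S_{\widehat U}(\be)\subseteq\widetilde{\Delta}^S_{\widehat H}(\be)$. If the latter met $E$, then \cite[Theorem 2.8]{GMM} (a result on consecutive elements and levels) would force $\te\in A_{i-1}$, contradicting either $\al\in A_i$ (if $\te=\al$) or the maximality of $\be$ (if $\te<\al$). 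So the contradiction is extracted from a single consecutive element via an already-available theorem, not from an ad hoc sequence of (G2)-moves. To repair your write-up you would either need to carry out your (G2) construction in full, with control of levels, or switch to the maximality normalization and invoke the consecutive-element result.
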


\begin{proof}
Observe that if there exists $\be \in A_j$ such that $\al \gg \be$, the thesis is satisfied since $U = \emptyset$ and $\widetilde{\Delta}^S_{\widehat{U}}(\be) = \lbrace \be \rbrace \subseteq A$.
    First let us consider the case $j=i-1$. This case will also provide a base for an induction on $i$.
    By \cite[Proposition 2.10]{GMM2} there exists $\be \in A_{i-1}$ such that $\be < \al$. We can assume that there are no other elements in $A_{i-1}$ between $\al$ and $\be$. Let $\te \in S$ be an element consecutive to $\be$ such that $\be < \te \leq \al$. Hence, $\te \in \Delta^S_H(\be)$ with $H \supseteq U$ and $\widetilde{\Delta}^S_{\widehat{U}}(\be) \subseteq \widetilde{\Delta}^S_{\widehat{H}}(\be) $.
    If by way of contradiction $\widetilde{\Delta}^S_{\widehat{U}}(\be) \nsubseteq A$, by \cite[Theorem 2.8]{GMM} the element $\te \in A_{i-1}$. In particular, $\te < \al$ and this contradicts the fact that no elements between $\al$ and $\be$ are in $A_{i-1}$.
    
    By induction, after finding $\be \in A_{i-1}$ satisfying the thesis, taking $j < i-1$, we can find $\de \in A_j$ such that $\be \in \Delta^S_V(\de)$ and $\widetilde{\Delta}^S_{\widehat{V}}(\de) \subseteq A$. It follows that $\al \in \Delta^S_H(\de)$ with $H \subseteq U \cap V$. Since $\widehat{H} \supseteq \widehat{U} \cup \widehat{V} \supseteq \widehat{V}$, we get $ \widetilde{\Delta}^S_{\widehat{H}}(\de) \subseteq \widetilde{\Delta}^S_{\widehat{V}}(\de) \subseteq A. $ This concludes the proof.
\end{proof}

\begin{oss}
    \label{chainofnodes}
    The proof of Lemma \ref{lemmanodominore} shows that, starting from an element $\al^{(N-1)} \in A_{N-1}$ we can find a chain of elements
    $$ \boldsymbol{0}= \al^{(0)} < \al^{(1)} < \cdots < \al^{(j)} < \cdots < \al^{(N-2)} < \al^{(N-1)}, $$ such that for every $j=0, \ldots, N-1$, $\al^{(i)} \in A_i$ and, for every $k < j$, if $\al^{(j)} \in \Delta^S_U(\al^{(k)})$ for some $U \neq \emptyset$, then $\widetilde{\Delta}^S_{\widehat{U}}(\al^{(j)}) \subseteq A$.
\end{oss}



All the results from now until the end of the section are very technical and use the notion of subspaces of a good semigroup introduced in \cite{GMM}. The only result needed in the next sections is the statement of Lemma \ref{congruencelemma}. \\

Let $S \subseteq \N^d$ be an arbitrary good semigroup and let $A= \bigcup_{i=0}^{N-1}$ be its Ap\'ery set with respect to a nonzero element $\om= (\omega_1, \ldots, \omega_d)$. Set as usual $E= S \setminus A$ and denote the conductor of $E$ by $\boldsymbol{c_E}=(c_1, \ldots, c_d)= \g + \om + \boldsymbol{1}$. 

The following definition and properties are taken from \cite[Section 3]{GMM}.

We recall the next useful fact which describes the behavior of the levels of the Ap\'ery set for large elements.

\begin{prop}\label{newprop}
\rm \cite[Proposition 2.9]{GMM} \it
Let $\C$ be the conductor of $E=\om +S$, let $\de \ge \C$ and
let $\al \in \N^d$ be such that $\al \not<\de$ and $\te = \al \wedge \de$.
Let $U=\{i : \alpha_i < \delta_i\}$. 
Then the
following conditions are equivalent:
\begin{enumerate}
\item
$\al \in A_j$
\item
$\widetilde{\Delta}_{U}(\al)\cup \{\al\}\subset A_j$
\item
$\widetilde{\Delta}_{U}(\te)\cup \{\te\}\subset A_j$
\end{enumerate}
In particular, as a consequence, if $\de = \C$
the Ap\'ery set $A=\Ap(S,\om)$ and its levels $A_j$ depend only on
the finite subset $ \{\al \in A : \al \le \C\}$.
\end{prop}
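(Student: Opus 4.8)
The plan is to exploit the fact that $\de \geq \C$ forces the behaviour of $S$ (and hence of the levels $A_j$) around $\de$ to mimic the behaviour of $\N^d$ around $\boldsymbol{0}$, after a suitable translation. Concretely, since $\de + \N^d \subseteq E \subseteq S$, for any $\boldsymbol{\eta} \geq \boldsymbol{0}$ the element $\de + \boldsymbol{\eta}$ lies in $S$ (in fact in $E$), so all the "directions" out of $\de$ are available in $S$; this is what makes the purely combinatorial $\widetilde{\Delta}_U$ notation, computed in $\N^d$, interchangeable with its $S$-counterpart for elements $\geq$ some coordinate-patched version of $\C$. First I would record the elementary observation that, since $\C \leq \de$ and $\al \wedge \de = \te$ with $U = \{i : \alpha_i < \delta_i\}$, the sets $\widetilde{\Delta}_U(\al)$ and $\widetilde{\Delta}_U(\te)$ consist of elements that agree with $\al$ (resp. $\te$) on $U$ and are $\geq$ on $\widehat U$; on $\widehat U$ the common value is already $\geq c_i$, so every such element, together with $\al$ and $\te$ themselves, lies in $S$, and moreover $\al$ and $\te$ have the same coordinates on $U$ while $\te$ has coordinates $\geq \C$ on $\widehat U$. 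Thus $(2)$ and $(3)$ are really two presentations of the same configuration shifted along $\widehat U$, and the equivalence $(2)\Leftrightarrow(3)$ should follow from a translation argument together with the observation that adding a vector supported on $\widehat U$ and large enough moves an element "up" without changing its level relative to the $\widetilde\Delta_U$-neighborhood — here one invokes that $\al \in A_j$ iff $\al \not> \boldsymbol{\theta}$ for any $\boldsymbol\theta \in A_{j'}$ with $j' \geq j$, plus the partition structure of Definition \ref{deflivelli}.

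The implication $(1)\Rightarrow(2)$ is the heart of the matter. Given $\al \in A_j$, I would argue that every $\be \in \widetilde{\Delta}_U(\al)$ lies in the same level. Since $\be > \al$ is false (they agree on $U \neq \widehat U$ in general, so $\be \not\gg \al$) and $\be \geq \al$, one cannot immediately use the monotonicity $\al \ll \be \Rightarrow \lambda(\be) > \lambda(\al)$; instead one uses Lemma \ref{lambda} to get $\lambda(\be) \geq \lambda(\al) = j$ from $\al \leq \be$, and then one must produce the reverse inequality. For the reverse inequality I would use property (G2) repeatedly along $\widehat U$ together with the fact that $\al$ and $\be$ have the same $U$-coordinates: any element of $A$ dominating $\be$ on all coordinates would, by patching with $\al$ via (G1) and (G2), produce an element of $A$ strictly above $\al$ in the $\le\le$ sense lying in a strictly higher level, contradicting $\al \in A_j$ if $\be$ were in a higher level than $\al$. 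This is exactly the kind of "subspace" argument developed in \cite{GMM}, and I expect that the clean way to run it is to reduce to the one-directional statement: if $\al \in A_j$ and $\al + e_k \boldsymbol{e}$ (a unit step in a direction $k \in \widehat U$, which lies in $S$ because the $k$-th coordinate is already $\geq c_k$) — then $\al + \boldsymbol{e}_k \in A_j$ too; iterating over $\widehat U$ gives $\widetilde{\Delta}_U(\al) \cup \{\al\} \subseteq A_j$.

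The implications $(2)\Rightarrow(1)$ and $(3)\Rightarrow(1)$ are trivial since $\al \in \widetilde\Delta_U(\al)\cup\{\al\}$ and $\te \in \widetilde\Delta_U(\te)\cup\{\te\}$ respectively are among the listed elements. Finally, for the "in particular" clause: taking $\de = \C$, statement $(1)\Leftrightarrow(3)$ says that the level of any $\al \in A$ with $\al \not< \C$ is determined by the level of $\te = \al \wedge \C \leq \C$; combined with the trivial fact that elements $\al \leq \C$ are what they are, this shows $A$ and all the $A_j$ are determined by the finite truncated data $\{\al \in A : \al \leq \C\}$, as claimed. The main obstacle I anticipate is the level-preservation step in $(1)\Rightarrow(2)$: showing that moving up by one unit in a "large" coordinate direction does not bump the level. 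This requires carefully checking that such a step cannot create, or destroy, a complete-infimum decomposition relevant to the inductive definition of $B^{(i)}, C^{(i)}, D^{(i)}$ — essentially that the combinatorics stabilizes past the conductor — and is precisely where the machinery of \cite{GMM} is indispensable.
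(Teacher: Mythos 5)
First, a structural point: the paper does not prove this proposition at all --- it is imported verbatim as \cite[Proposition 2.9]{GMM} --- so there is no in-paper proof to compare your attempt against; your proposal has to stand on its own.

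On its own terms it has the right skeleton but a genuine gap at the one place where the content lives. The easy implications are fine (for $(3)\Rightarrow(1)$ the relevant observation is that $\al$ itself lies in $\widetilde{\Delta}_{U}(\te)\cup\{\te\}$, since $\al$ agrees with $\te$ on $U$ and dominates it on $\widehat U$ --- not merely that $\te$ is among the listed elements, which only yields $\te\in A_j$), and your reduction of $(1)\Rightarrow(2)$ and of $(2)\Leftrightarrow(3)$ to the single claim ``if $\al\in A_j$ and $\alpha_k\ge c_k$ then $\al+\bs{e}_k\in A_j$'' is sound, because every element of $\widetilde{\Delta}_U(\te)\cup\{\te\}$ is reached from $\te$ by unit steps in directions of $\widehat U$ along which the coordinate never drops below the conductor. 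But that one-step claim \emph{is} the proposition in the case $\widehat U=\{k\}$, and the justification you offer for it --- ``patching with $\al$ via (G1) and (G2) would produce an element of $A$ strictly above $\al$ in a higher level'' --- is not an argument: the levels are defined by a top-down recursion through maximal elements and complete infima, and nothing you write engages with why a unit step in a post-conductor direction cannot change the stage of that recursion at which the element is removed. You concede this yourself when you defer the step to ``the machinery of \cite{GMM}''; as written, the proof is not there. A secondary gap: membership of $\widetilde{\Delta}_U(\al)$ in $S$ is not automatic from the conductor, since these elements are not $\ge\C$ on the $U$-coordinates; it needs its own short (G1)/(G2) argument.

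The gap is closable with a tool this paper already uses. Once one checks $\al+\bs{e}_k\in S$, the elements $\al$ and $\al+\bs{e}_k$ are consecutive in $S$, with $\al+\bs{e}_k\in\Delta^S_H(\al)$ for $H=\{1,\dots,d\}\setminus\{k\}$, and $\widetilde{\Delta}^S_{\{k\}}(\al)$ meets $E$ because $\alpha_k\ge c_k$ (the element with $k$-th coordinate $\alpha_k$ and all other coordinates beyond the conductor lies in $E$). The consecutive-element criterion invoked in the proof of Lemma \ref{lemmanodominore}, namely \cite[Theorem 2.8]{GMM}, then places $\al+\bs{e}_k$ in the same level as $\al$; iterating gives $(1)\Rightarrow(2)$, and applying the same argument from $\te$ (after first ruling out $\te\in E$ by the analogous step inside the good ideal $E$) gives $(1)\Rightarrow(3)$. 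I would ask you to supply this argument explicitly rather than gesture at it.
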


\begin{definition}
\label{subspacedef}
\rm
Pick a non-empty set $ U\subseteq \{ 1, \ldots, d\}$. 
For $\al \in \N^d$ such that $\alpha_j=c_j$ for all $j\in \wh U$, define $$\al(U)= \widetilde{\Delta}_U(\al)\cup \{\al\}$$ 
We say that $\al(U)$ is an \it $U$-subspace \rm (or simply a subspace) of $\N^d$. 
We have that:
\begin{itemize}
    \item If $\al \in E$, then $\al(U) \subseteq E$ and in this case we say that it is an $U$-subspace of $E$, or that $\al(U) \in E(U)$.
    \item If $\al \in A$, then $\al(U) \subseteq A$ and in this case we say that it is an $U$-subspace of $A$. In particular, if $\al \in A_i$, the subspace $\al(U) \subseteq A_i$ and we write shortly that $\al(U) \in A_i(U)$.
\end{itemize}
\end{definition}
Observe that if
$\de(V)$ is a subspace and $U\supseteq V$. If $\al\in \widetilde{\Delta}_{V}(\de)$,
then $\al(U)\subseteq \de(V)$.

The dimension of a subspace is defined accordingly to its intuitive geometric representation. We say that $\al(U)$  has dimension equal to the cardinality of $\wh U$. Indeed, the subspaces of dimension zero are points, those of dimension one are lines, those of dimension two are planes, and so on.

\begin{figure}[H]
\begin{subfigure}{.5\textwidth}
\centering
 \includegraphics{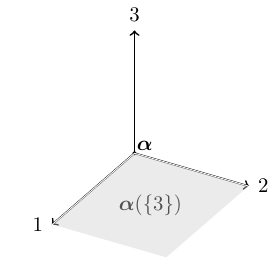}
\end{subfigure}%
\begin{subfigure}{.5\textwidth}
\centering
\includegraphics{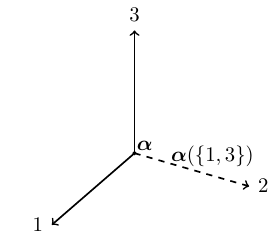}
\end{subfigure}
\caption{\footnotesize{In the figure of the left is represented the plane $\alpha(\{3\})$ which is a subspace of dimension 2. In the figure on the right, the dashed line $\alpha(\{1,3\})$ represents a subspace of dimension 1.}}
\label{fig:sub}

\end{figure}


 The proof of the following lemma is based on part of the argument used to prove \cite[Theorem 4.4]{GMM}.

\begin{lemma}
\label{subspaceslemma} 
Fix an index $i \in \{ 1, \ldots, d \}$.
Let $V$ be a nonempty set of indexes not containing $i$ and set $W:=V \cup \lbrace i \rbrace$. Choose a
subspace of the form $\te(V)$ contained in $A$ such that $\te$ is a minimal element for which a subspace of $A$ of such form exists. 
Then, 
there exist $\omega_i$ distinct subspaces of the form $\be^{(0)}(W), \ldots, \be^{(\omega_i-1)}(W) \subseteq \bigcup_{l < \lambda(\te)}A_l$ such that the coordinates $\be^{(0)}_i, \ldots, \be^{(\omega_i-1)}_i$ form a complete system of residues modulo $\omega_i$.
\end{lemma}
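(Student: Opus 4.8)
The plan is to build the required subspaces one coordinate-class at a time, starting from the minimal subspace $\te(V) \subseteq A$ and using the semigroup axiom (G2) together with the structure of levels. First I would fix $\te(V) \subseteq A$ with $\te$ minimal. Since $\te(V) \subseteq A$ and $i \notin V$, the elements of $\te(V)$ have $i$-th coordinate equal to $\theta_i$, and I claim $\theta_i < \omega_i$: otherwise, subtracting $\om$ from the $i$-th coordinate (or rather noting that $\te - \e_i \cdot(\text{something})$ interacts with the ideal $E = \om + S$) would either contradict $A = S \setminus E$ or contradict minimality of $\te$. More precisely, I would argue that if $\theta_i \geq \omega_i$ then by (G2) applied along coordinate $i$ one produces an element of $S$ below $\te$ still supporting a $V$-subspace in $A$, contradicting minimality; this pins down $0 \le \theta_i \le \omega_i - 1$.

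Next, the core construction. Set $\be^{(0)} := \te$, so $\be^{(0)}(W) \subseteq \te(V) \subseteq A \subseteq \bigcup_{l \le \lambda(\te)} A_l$, and in fact since $W \supsetneq V$ the subspace $\be^{(0)}(W)$ sits in levels $< \lambda(\te)$ or equal — I would invoke the level behaviour (elements strictly dominating go to strictly higher levels, and here $\be^{(0)}(W) \subseteq \widetilde\Delta^S_{\widehat{V}}(\te) \cup \{\te\}$-type reasoning) to place it in $\bigcup_{l < \lambda(\te)} A_l$ once one discards $\te$ itself appropriately — actually the cleanest route is: every element of $\be^{(0)}(W)$ other than $\te$ dominates $\te$ in the coordinates of $\widehat W \subsetneq \widehat V$, hence has level $\ge \lambda(\te)$, but the whole subspace is required only to lie in $\bigcup_{l<\lambda(\te)} A_l$, so I will need to be careful here and possibly replace $\te$ by a slightly larger minimal choice, or reinterpret: I expect the statement intends the $W$-subspaces to be \emph{collectively} sweeping through levels below $\lambda(\te)$, and the base one may itself have level $\lambda(\te) - $ something. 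I would then, inductively, having produced $\be^{(0)}, \ldots, \be^{(k-1)}$ with distinct residues mod $\omega_i$ of their $i$-th coordinates, use axiom (G2) on any two of them sharing coordinates, or use (G3)/the conductor to find a fresh element of $S$ whose $i$-th coordinate realizes a new residue class mod $\omega_i$ while keeping all other coordinates large enough that it still supports a $W$-subspace contained in $A$, and whose level is below $\lambda(\te)$. The key mechanism is that moving the $i$-th coordinate by increments that are not multiples of $\omega_i$ keeps us inside $\Ap(S,\om) = A$ in that coordinate, exactly as in the numerical-semigroup Apéry-set argument, while (G2) guarantees we can find genuine semigroup elements doing this.

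The main technical engine I would borrow is the argument of \cite[Theorem 4.4]{GMM} (as the statement itself signals), which is precisely the proof that $\Ap(S,\om)$ has $\omega_1 + \cdots + \omega_d$ levels: that proof already organizes elements of $A$ into residue classes modulo the $\omega_i$ along each coordinate direction, and shows each class is nonempty and level-coherent. I would extract from it the sub-statement that, fixing all coordinates outside $W$ at their conductor values and fixing the $V$-part at $\te$'s values, the possible $i$-th coordinates of elements of $A$ of this shape hit every residue class mod $\omega_i$ exactly once among the levels below $\lambda(\te)$; repackaging those $\omega_i$ witnesses as subspaces $\be^{(j)}(W)$ via Definition \ref{subspacedef} gives the conclusion. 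Concretely the steps are: (1) show $\theta_i < \omega_i$ via (G2) and minimality; (2) lift $\te(V)$ to a $W$-subspace $\be^{(0)}(W) \subseteq A$ with controlled level; (3) for each target residue $\rho \not\equiv \theta_i \pmod{\omega_i}$, produce $\be^{(\rho)} \in S$ with $i$-th coordinate $\equiv \rho$, supporting a $W$-subspace in $A$, using (G2) to adjust coordinates in $\widehat V$ and the Apéry/conductor structure in coordinate $i$; (4) verify all these subspaces lie in $\bigcup_{l < \lambda(\te)} A_l$ using Lemma \ref{lambda} and Proposition \ref{newprop}; (5) count: exactly $\omega_i$ distinct residues, hence $\omega_i$ distinct subspaces.

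\textbf{Expected main obstacle.} The hard part will be Step (4) — the level control. Producing semigroup elements with the prescribed residues is relatively routine given (G2) and the conductor, but guaranteeing that each resulting $W$-subspace lands strictly below $\lambda(\te)$ (rather than merely somewhere in $A$) requires carefully exploiting the minimality of $\te$: any element that supported a $W$-subspace at level $\ge \lambda(\te)$ with the wrong shape would, after an application of (G2) in the $i$-direction and Lemma \ref{lemmanodominore}/Remark \ref{chainofnodes}, yield a $V$-subspace of $A$ below $\te$, contradicting minimality. Threading this contradiction argument for all $\omega_i$ residue classes simultaneously, while keeping the bookkeeping of which coordinates are frozen at conductor values and which are free, is the delicate point, and it is where I would spend most of the effort, leaning on Proposition \ref{newprop} to reduce everything to the finite window $\{\al \in A : \al \le \C\}$.
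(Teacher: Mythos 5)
There is a genuine gap, and it starts with a misreading of Definition \ref{subspacedef}. For a $V$-subspace $\te(V)$ one has $\theta_j=c_j$ for every $j\in\widehat V$, and since $i\notin V$ this forces $\theta_i=c_i$; moreover the elements of $\te(V)$ have $i$-th coordinate ranging over all values $\ge c_i$, not fixed at $\theta_i$. So your Step (1), the claim $\theta_i<\omega_i$, is false and the (G2)-plus-minimality argument you sketch for it cannot work: the minimality of $\te$ concerns which $\te$ support a $V$-subspace of $A$, and says nothing about the size of $\theta_i$. More seriously, your base case $\be^{(0)}:=\te$ puts $\be^{(0)}(W)\subseteq\te(V)\subseteq A_{\lambda(\te)}$ (a subspace lies entirely in one level), which violates the required strict inequality $l<\lambda(\te)$; you flag this yourself and leave it unresolved. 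That unresolved point is not a technicality — it is the whole content of the lemma, and none of the residue representatives can be taken inside $\te(V)$ itself.

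The missing idea is a \emph{descent} step. The paper first observes that $\te(V)\subseteq A_{\lambda(\te)}$ contains infinitely many $W$-subspaces, so every residue class mod $\omega_i$ is already realized by some $W$-subspace in the level $A_{\lambda(\te)}$; for each residue $j$ it takes the minimal such subspace $\te^j(W)$. It then proves $\widetilde{\Delta}^E_i(\te^j(W))\neq\emptyset$ — this is the delicate point, handled by the complete-infimum theorems \cite[Theorems 3.7 and 3.9]{GMM}: if that set were empty, $\te^j(W)$ would be a complete infimum of same-level subspaces and would be forced into a lower level. Taking a minimal element of $\widetilde{\Delta}^E_i(\te^j(W))$ and subtracting suitable multiples of $\om$ (which preserves the $i$-th residue and, by definition of the Ap\'ery set, eventually lands back in $A$) produces $\be^{(j)}(W)\subseteq A$ with $\beta^{(j)}_i\equiv\te^j_i\pmod{\omega_i}$; the level of $\be^{(j)}(W)$ is then strictly below $\lambda(\te)$ precisely because $\te^j(W)$ was chosen minimal in $A_{\lambda(\te)}$ within its residue class. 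Your Step (3) gestures at producing fresh residues "using (G2) and the conductor," but without the non-emptiness of $\widetilde{\Delta}^E_i$ and the subtraction of $\om$ there is no mechanism that simultaneously realizes a prescribed residue, keeps a whole $W$-subspace inside $A$, and forces the level down. As written, the proposal does not close.
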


To help the reader we add separately the proof in the case $d=2$, and then the proof of the general case.

\begin{proof} (Proof of Lemma \ref{subspaceslemma} in the case $d=2$) \\
First set $i=1$. Clearly by definition of the conductor $\boldsymbol{c_E}=(c_1, c_2)$ of the good ideal $E$ there are infinitely many elements $\al \in E$ such that $\alpha_2= c_2$.
Thus, for every $j=0, \ldots, \omega_1-1$ we can find a unique minimal element $\al^{(j)} \in E$ such that $\alpha^{(j)}_1 \equiv j \mbox{ mod } \omega_1$ and $\alpha^{(j)}_2= c_2.  $ 
Hence for every $j$ there exists $n_j \geq 1$ such that $\al^{(j)} - n_j \om \in A$. For $\gamma \in S$, set $H_1(\gamma)= \lbrace   \de \in S | \delta_1 = \gamma_1 \rbrace$. If $H_1(\al^{(j)}-n_j \om) \cap E \neq \emptyset$ we can continue subtracting multiples of $\om$ to some element in $H_1(\al^{(j)}) \cap E$ until we find an element $\be^{(j)} \in A$ such that $\beta^{(j)}_1 \equiv \alpha^{(j)}_1 \equiv j \mbox{ mod } \omega_1$ and $H_1(\be^{(j)}) \subseteq A$. Without loss of generality we can assume  
$\be^{(j)}$ to be the minimal element of $H_1(\be^{(j)})$. Now let $\te \in A$ be the minimal element of $S$ such that $\theta_1 = c_1$ and
$\Delta_2(\te) \subseteq A$. We show that $\lambda(\be^{(j)}) < \lambda(\te)$ for every $j$. Indeed, by minimality of $\be^{(j)}$ in $H_1(\be^{(j)})$, using property (G1), we must have $\beta^{(j)}_2 \leq \theta_2$ and by construction of $\be^{(j)}$ we must have $\beta^{(j)}_1 < c_1 = \theta_1 $. Using that $\Delta^S_1(\be^{(j)}) \subseteq  H_1(\be^{(j)}) \subseteq A$ we get the inequality $\lambda(\be^{(j)}) < \lambda(\te)$ by \cite[Lemma 2(3)]{DGM} together with the definition of levels. \end{proof}

\begin{proof} (Proof of Lemma \ref{subspaceslemma} for arbitrary $d$) \\
Relabelling the indexes, we can assume that $W=\lbrace 1,\ldots, i \rbrace$ and $V=\lbrace 1,\ldots, i-1 \rbrace$. Denoting by $l=\lambda(\te)$, we have that $\te(V)\subseteq A_l$, hence it is clear that there exist infinitely many $W$-subspaces contained in level $A_l$ (a space of dimension $j$ contains infinitely many spaces of dimension $j-1$).
Among them, for every $j=1,\ldots, w_i$, there exist subspaces
$\te^j(W)\in A_l(W)$ minimal with respect to the property of having $\te^j_i \equiv j $ mod $w_i$.

For each $j$, we show that  $\tdE{i}{\te^j(W)} \neq \emptyset$. Indeed, after fixing $\te^j(W)$, using the fact that there are infinitely many $W$-subspaces contained in $\te(V)$, we can find $\te^{\prime}(W) \in A_l(W)$ such that  $\theta^{\prime}_i > \theta^j_i$ (observe that since they are in the same level necessarily $\theta^{\prime}_h = \theta^j_h$ for some $h < i$). 
Now, if we assume $\tdE{i}{\te^j(W)} = \emptyset$, applying \cite[Theorem 3.7]{GMM} to $\te^j(W)$ and $\te^{\prime}(W)$, we can write
$$ \te^j(W) = \te^{\prime}(W) \wt \al^1(W) \wt \cdots \wt \al^r(W) $$
where $\al^m(W) \in \td{i}{\te^j(W)} \subseteq A(W)$ and we may assume $\al^m(W)$ to be consecutive to $\te^j(W)$ for all $m\in{1,\ldots r}$. By \cite[Theorem 3.9.1]{GMM}, for every $m$, $\al^m(W) \in A_j(W)$ implies that $\te^j(W)$ has to be in a lower level. This is a contradiction (for a graphical representation see Figure \ref{fig:maintheorema}). 

Hence, we can set $\bs{\tau}^j(W)$ to be a minimal element in $\tdE{i}{\te^j(W)}$. We define $\overline{\om}$ such that $\overline{\omega_k}=\omega_k$ if $k\in W$ and $\overline{\omega_k}=c_k$ otherwise, and, starting from $\bs{\tau}^j(W)$ and subtracting multiples of $\overline{\om}(W)$, we find a unique $m_j \geq 1$ such that $  \bs{\tau}^j(W) - m_j \overline{\om}(W)=:\be^j(W) \in A(W)$ (see Figure \ref{fig:maintheoremb}).

Consider now 
the set $H_i(\al^j(W))= \{\be(U) \subseteq S | \beta_i = \alpha_i^j \}$.
In the case this set contains some subspace of $E$, starting by one of these subspaces and subtracting multiples of $\overline{\om}(U)$, we can repeat the process and, after changing names, we can finally assume to have a collection of subspaces $\be^1(W), \ldots, \be^{w_k}(W) \in A(W)$ such that for every $j$, $\beta^j_i \equiv \theta^j_i \equiv j $ mod $w_i$ and $H_i(\be^j(W)) \subseteq A(W)$. We can further replace $\be^j(W)$ by another subspace, and assume that $\be^j(W)$ is the minimal $W$-subspace in the set $H_i(\be^j(W))$ (this minimal subspace is well-defined by property (G1), see the results in \cite[Sections 3 and 4]{GMM}).

  To conclude, notice that for every $j$, the level of $\be^j(W)$ has to be strictly lower than $l$ since $\te^j(U)$ has been chosen to be the minimal in $A_l$ having $k$-th component congruent to $j$ modulo $w_j$.
  \end{proof}

\begin{figure}[H]
\begin{subfigure}{.53\textwidth}
 \includegraphics[scale=1.6]{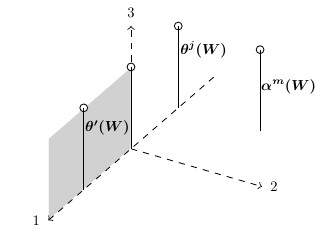}
  \caption{}
  \label{fig:maintheorema}
\end{subfigure}%
\begin{subfigure}{.5\textwidth}
 \includegraphics[scale=0.8]{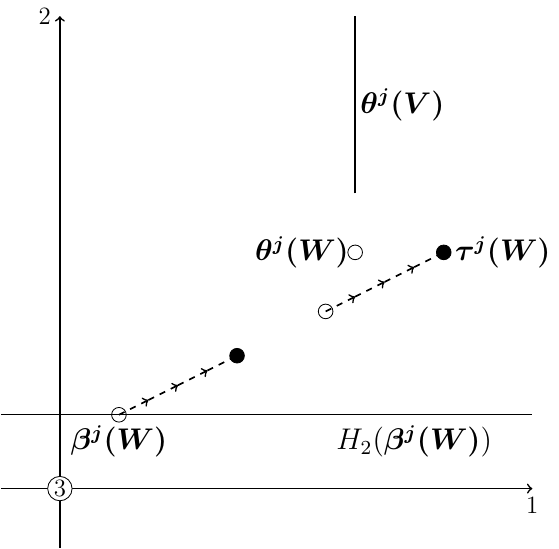}
   \caption{}
   \label{fig:maintheoremb}
\end{subfigure}%
\caption{\footnotesize{\ref{fig:maintheorema}: we have $d=3$, $U=\{1,2\}$, $\te^j(W),\te^{\prime}(W),\al^m(W)$ are lines. \ref{fig:maintheoremb}: this is a perspective from "above" of the case $d=3$, $U=\{1,2\}$, $V=\{1\}$. In this case $\te^{j}(V)$ is a plane contained in $A$; $\te^j(W),\bs{\tau}^j(W),\be^j(W)$ are lines.}}
\label{fig:maintheorem}
\end{figure}

\begin{lemma}
    \label{congruencelemma}
  Let $S$ and $A$ be defined as above.
  Then, it is possible to find a sufficiently large element $\he \gg \g + \om$ such that, given any index $i$ and any element $\al \in A$ such that $\alpha_i \geq \eta_i$, there exists $\de \in \Delta^E_i(\al)$ such that $\de = m \om + \be$, with $m \geq 1$, $\be \in A$, and $\lambda(\be) < \lambda(\al)$.
\end{lemma}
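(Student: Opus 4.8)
The plan is to produce the element $\he$ by a finite‑choice argument, assembling it coordinate by coordinate from the data supplied by Lemma \ref{subspaceslemma}, and then to verify the required property using Proposition \ref{newprop} to reduce everything to a bounded region. First I would fix, for each index $i$, a nonempty $V \not\ni i$ and a minimal subspace $\te(V) \subseteq A$ of the kind in Lemma \ref{subspaceslemma}; this yields $\omega_i$ subspaces $\be^{(0)}(W_i), \ldots, \be^{(\omega_i-1)}(W_i)$ contained in $\bigcup_{l<\lambda(\te)} A_l$ whose $i$‑th coordinates form a complete system of residues modulo $\omega_i$, and moreover (by the construction in that proof) each $H_i(\be^{(j)}(W_i)) \subseteq A$, i.e. the whole fibre over the value $\be^{(j)}_i$ lies in $A$. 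The key point is that each $\be^{(j)}(W_i)$ is obtained from an element of $E$ whose $i$‑th coordinate is congruent to $j$ by subtracting a positive multiple of $\om$, so for \emph{every} sufficiently large $\mu$ with $\mu \equiv j \pmod{\omega_i}$ there is an element of $E$ with $i$‑th coordinate $\mu$ lying ``above'' $\be^{(j)}(W_i)$ in the sense that subtracting $\om$ lands back on the subspace $H_i(\be^{(j)}(W_i)) \subseteq A$. I would then define $\eta_i$ to be larger than $\gamma_i + \omega_i$, larger than every coordinate appearing in the finitely many chosen subspaces $\be^{(j)}(W_k)$ and $\te$'s, and large enough that the ``sufficiently large $\mu$'' thresholds above are met; set $\he = (\eta_1, \ldots, \eta_d)$, enlarged further so that $\he \gg \g + \om$.

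Next I would verify the property. Take any $i$ and any $\al \in A$ with $\alpha_i \geq \eta_i$. Write $j$ for the residue of $\alpha_i$ modulo $\omega_i$ and consider the corresponding subspace $\be^{(j)}(W_i)$ with $\beta^{(j)}_i \equiv j$; since $\alpha_i \geq \eta_i > \gamma_i + \omega_i$ and $\alpha_i \equiv \beta^{(j)}_i \pmod{\omega_i}$, the element $\de$ defined by $\delta_i = \alpha_i$ and $\delta_l = c_l$ (the conductor coordinate) for $l \neq i$ lies in $E$, indeed in $\Delta^E_i(\al)$ once we also demand $\delta_l > \alpha_l$ for $l \neq i$, which holds because $\al \in A$ forces $\alpha_l < c_l$ for at least the non‑fixed coordinates — here I would instead take $\de$ to be a suitable element of $\Delta^E_i(\al)$ of the form $\de = \alpha_i \e_i + (\text{large in the other coordinates})$, using (G3) and the fact that $\alpha_i \geq \eta_i > \gamma_i$ guarantees $\{\mu : \mu \equiv \alpha_i, \mu \geq \text{stuff}\} + (\text{conductor in other coords}) \subseteq S$. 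Subtracting $\om$ repeatedly from $\de$: because $\de$ agrees with the fibre of $\be^{(j)}(W_i)$ in the $i$‑th coordinate modulo $\omega_i$ and is large, the chain of subtractions $\de, \de - \om, \de - 2\om, \ldots$ eventually reaches $H_i(\be^{(j)}(W_i)) \subseteq A$; let $m \geq 1$ be minimal with $\de - m\om =: \be \in A$. Then $\be_i = \alpha_i - m\omega_i < \alpha_i$, so $\be$ and $\al$ are not $\leq\leq$‑comparable in the trivial way, and since $\be_i < \alpha_i$ we still need $\lambda(\be) < \lambda(\al)$.

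For the level inequality I would argue as follows. By construction $\be \in H_i(\be^{(j)}(W_i)) \subseteq \bigcup_{l < \lambda(\te)} A_l$, but that bound alone is not enough; the sharper input is that $H_i(\be) \subseteq A$ contains $\be$, that $\be_i < \alpha_i$ while $\be \leq \de - (m-1)\om + (\text{something})$, and crucially that $\de - (m-1)\om \in E$ lies in $\Delta^E_i$ of a point of $A$ that is $\geq \be$. Concretely: $\be + \om$ is in $E$ (minimality of $m$), its $i$‑th coordinate is $\alpha_i - (m-1)\omega_i$, and $\be + \om \leq \de' $ where $\de'$ is an element of $\Delta^E_i(\al)$; by Lemma \ref{lambda}, $\lambda(\al) \geq \lambda(\be + \om) > \lambda(\be)$ provided $\be + \om$ strictly dominates $\be$ in level, which holds because $\be \ll \be + \om$ is false — rather $\be < \be+\om$ with equality in no coordinate of $W_i$... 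I would instead invoke the last sentence of Lemma \ref{lambda} after checking $\al \geq \be'$ for a suitable $\be' \in A_{\lambda(\be)}$ with $\be'_i > \alpha_i$: take $\be' \in \Delta^S_i(\be) \cap H_i(\be) \subseteq A$, note $\be' \in A_{\lambda(\be)}$ since the whole fibre $H_i(\be)$ sits in one level by Proposition \ref{newprop} (as $\eta_i$ was chosen past the conductor), and observe that $\al \wedge \be'$ has $i$‑th coordinate $\alpha_i$ and large other coordinates, forcing $\lambda(\al) > \lambda(\be') = \lambda(\be)$ by Lemma \ref{lemmanodominore} applied with $U = \{i\}$. \textbf{The main obstacle} I anticipate is exactly this last step: turning the ``subtract multiples of $\om$'' bookkeeping into a clean strict level drop, since $\be_i < \alpha_i$ means $\al$ does not dominate $\be$, so one must manufacture the right comparison element in $A_{\lambda(\be)}$ with large $i$‑th coordinate and invoke Proposition \ref{newprop} and Lemma \ref{lemmanodominore} carefully; the rest is routine once $\he$ is pinned down by the finitely many thresholds coming from Lemma \ref{subspaceslemma}.
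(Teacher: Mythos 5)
Your opening moves match the paper's strategy: invoke Lemma \ref{subspaceslemma} to get, for each $i$, families of low-level subspaces whose $i$-th coordinates exhaust the residues modulo $\omega_i$, and pin down $\he$ by finitely many thresholds. But the proof has a genuine gap exactly where you flag it: the strict level drop $\lambda(\be)<\lambda(\al)$ is never established, and the argument you sketch for it is wrong. If $\be=\de-m\om$ with $\delta_i=\alpha_i$ and $m\ge 1$, then any $\be'\in\Delta^S_i(\be)$ has $\be'_i=\be_i=\alpha_i-m\omega_i<\alpha_i$, so $\al\wedge\be'$ has $i$-th coordinate $\be_i$, not $\alpha_i$ as you claim; moreover Lemma \ref{lemmanodominore} produces elements in lower levels below a given one, it does not force $\lambda(\al)>\lambda(\be')$, and Lemma \ref{lambda} only gives non-strict inequalities from $\le$. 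Separately, taking $m$ minimal with $\de-m\om\in A$ gives no control on $\lambda(\de-m\om)$: the first time the chain enters $A$ it may do so in an arbitrarily high level, and it need not land on the fibre $H_i(\be^{(j)}(W_i))$ at all, since the $i$-th coordinate equals $\beta^{(j)}_i$ only for one specific value of $m$.

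Two structural changes repair this, and they are what the paper does. First, you must run the construction for \emph{every} nonempty $V\not\ni i$, not one fixed $V$ per $i$: given $\al\in A$ with $\alpha_i\ge\eta_i$, the element $\al$ lies in an infinite $V$-subspace $\te'(V)\subseteq A$ for the specific $V=\{k:\alpha_k\le\gamma_k+\omega_k\}$, and the chain of inequalities $\lambda(\be^{(j)})<\lambda(\te(V))\le\lambda(\te'(V))=\lambda(\al)$ uses the minimality of $\te(V)$ among $V$-subspaces of $A$ (so $\te\le\te'$ and Lemma \ref{lambda} applies) together with the fact that a subspace lies in a single level; with a single fixed $V_0$ there is no relation between $\lambda(\te(V_0))$ and $\lambda(\al)$. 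Second, reverse the direction of the bookkeeping: instead of building $\de$ at conductor height and subtracting $\om$, define $\be:=\be^{(j)}+\boldsymbol{\epsilon}$ with $\epsilon_k=0$ for $k\in W=V\cup\{i\}$ and $\epsilon_k>\alpha_k$ otherwise, so that $\be$ lies on the subspace $\be^{(j)}(W)$ and $\lambda(\be)=\lambda(\be^{(j)})$ is controlled from the start, and then set $\de:=\be+m\om$ where $m\ge 1$ is determined by $\alpha_i=\beta^{(j)}_i+m\omega_i$ (the threshold $\eta_i\ge\tau^{(j)}_i$ guarantees $m\ge m_j$, hence $\de$ is past the conductor and in $E$, and one checks coordinatewise that $\de\in\Delta^E_i(\al)$). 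With these two changes the "main obstacle" disappears rather than having to be argued around.
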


\begin{proof}   
  Fixed a coordinate $i$, we want to find an element $\he(i) \gg \g + \om$ such that if $\alpha_i \geq \eta(i)_i$, then there exists $\de \in \Delta^E_i(\al)$ of the required form. Then we can simply define $\he$ as the minimal element of $S$ that is larger or equal than 
  all the elements $\he(1), \ldots, \he(d)$ with respect to the partial ordering $\leq$. 

  Let $V$ be a nonempty set of indexes not containing $i$ and set $W:=V \cup \lbrace i \rbrace$. Given the minimal subspace of the form $\te(V)$ contained in $A$, 
  by Lemma \ref{subspaceslemma} 
  we can find $\omega_i$ distinct subspaces of the form $\be^{(0)}(W), \ldots, \be^{(\omega_i-1)}(W) \subseteq \bigcup_{l < \lambda(\te)}A_l$ such that the coordinates $\be^{(0)}_i, \ldots, \be^{(\omega_i-1)}_i$ form a complete system of residues modulo $\omega_i$. For every $j=0, \ldots, \omega_i-1$, define $\boldsymbol{\tau}^{(j)}:= \be^{(j)} + m_j \om$ where $m_j$ is the minimal positive integer such that $\be^{(j)} + m_j \om \gg \g + \om$. Then set $\he(V)$ equal to the element $\boldsymbol{\tau}^{(j)}$ which has the largest $i$-coordinate. Finally, set $\he(i)$ to be the minimal element of $S$ larger or equal than all the elements $\he(V)$ for every $V$ not containing $i$.
  
  Now we can pick $\al \in A$ and suppose that $\alpha_i \geq \eta(i)_i$. Since $\al$ has at least one coordinate larger than the conductor, it belongs to an infinite subspace of $A$ of the form $\te'(V)$ with $i \not \in V$. In particular $V$ is nonempty and $\alpha_k \leq \gamma_k + \omega_k$ for all $k \in V$.
  Fixing this set $V$, we can take the elements $\be^{(j)}$ and $\boldsymbol{\tau}^{(j)}$ defined previously. Clearly $ \alpha_i \equiv \beta_i^{(j)}$ modulo $\omega_i$ for some $j$. Hence there exists $m \geq 1$ such that $$ \alpha_i = \beta_i^{(j)} + m \omega_i \geq \eta(i)_i \geq \tau^{(j)}_i = \beta_i^{(j)} + m_j \omega_i.$$
  
  Set $\de:= \be^{(j)}  + \boldsymbol{\epsilon} + m \om$ where $\boldsymbol{\epsilon}$ is an element of $\N^d$ such that $\epsilon_k = 0$ for $k \in V \cup \lbrace i \rbrace$, and $\epsilon_k > \alpha_k$ for the other coordinates.
  Notice that with these assumptions $ \be^{(j)} + \boldsymbol{\epsilon} \in  \be^{(j)}(W) \subseteq A$ and $\de \in S$ since it is larger than the conductor (notice that $m \geq m_j$). Observe that $\delta_i = \alpha_i$ and, since a subspace is all contained in the same level, 
  observe also that $\lambda(\be^{(j)} + \boldsymbol{\epsilon})= \lambda(\be^{(j)}) < \lambda(\te) \leq \lambda(\te')=\lambda(\al)$. Furthermore, for $k \in V$ we have $\delta_k > \gamma_k + \omega_k \geq \alpha_k$ and for $k \not \in W$ we have $\delta_k > \alpha_k$ by definition of $\boldsymbol{\epsilon}$. In conclusion we obtain $\de \in \Delta^E_i(\al)$.
\end{proof}

\section{Semilocal rings associated to plane curves}

In this section we extend \cite[Theorem 4.1]{apery}  to the case where the blow-up of the coordinate ring of a plane curve is not local. In the first part of the section we describe the level of the Ap\'ery set of the value semigroup of a semilocal ring $R$ as sets of values of specific subsets of $R$. In the second part we describe how the levels of the Ap\'ery set of the value semigroup behave when passing from the ring of a plane curve to its blow-up and vice-versa.

\subsection{The Ap\'ery set of the value semigroup of a semilocal ring}

Let $R \cong \mathcal{O}_{1} \times \cdots \times \mathcal{O}_c $ be a direct product of local rings $\mathcal{O}_{j}$ associated to plane algebroid curves defined over an infinite field $K$. For every $j=1, \ldots, c$, let $S_j \subseteq \N^{d_j}$ denote the value semigroup of $\mathcal{O}_{j}$. For every $j$, $S_j$ is a local good semigroup (or a numerical semigroup).
The value semigroup of $R$ is $S= S_1 \times \cdots \times S_c \subseteq \N^{d}$ where $d = d_1 + \cdots + d_c$.

Let $\om=(\omega_1, \ldots, \omega_d) $ be an element of $S$  such that $\omega_i > 0$ for every $i=1, \ldots, d$. 
Let $A$ be the Ap\'{e}ry set of $S$ with respect to $\om$ and set $N:= \omega_1 + \cdots + \omega_d$.
The set $A$ can be partitioned as $\bigcup_{i=0}^{N-1}A_i$.
Let $F \in R$ be an element of value $\om$.

 \begin{lemma}
    \label{lemmanodi}
    Let $h_1, \ldots, h_t \in R$ with $t \leq N-1$ be such that for every $j$,
    \begin{itemize}
    \item $v(h_j)= \al_j \in A$,
    \item $\al_j < \al_{j+1}$,
    \item if $\al_k \in \Delta^S_U(\al_j)$ for some $k > j$ and $U \neq \emptyset$, then $ \widetilde{\Delta}^S_{\widehat{U}}(\al_j) \subseteq A $.
    \end{itemize}
     Then the images of $h_1, \ldots, h_t$ modulo $(F)$ are linearly independent over $K \cong \frac{K[[F]]}{(F)}$.
   \end{lemma}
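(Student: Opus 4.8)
The plan is to argue by contradiction, exactly as in the classical one--branch case but keeping track of all $d$ coordinates. Suppose the images $\bar h_1,\dots,\bar h_t$ are linearly dependent over $K$: then there are $c_1,\dots,c_t\in K$, not all zero, and $g\in R$ with $\sum_{j=1}^t c_jh_j=Fg$. Put $J=\{\,j: c_j\neq 0\,\}$, $m=\min J$, and $w=\sum_{j\in J}c_jh_j$. If $\#J=1$ we are done immediately: $w=c_mh_m$ has value $\al_m$, so $g\neq0$ and $\al_m=v(w)=v(Fg)=\om+v(g)\in\om+S=E$, impossible since $\al_m\in A=S\setminus E$. So assume $\#J\geq2$ and set $k:=\min(J\setminus\{m\})$; by the second hypothesis $\al_m<\al_k$.

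Next I would show $w\neq0$ and identify $v(w)$. Since the $\al_j$ are coordinatewise non-decreasing in $j$, in any coordinate $i_0$ with $(\al_k)_{i_0}>(\al_m)_{i_0}$ --- one exists because $\al_m<\al_k$ --- every term $c_jh_j$ with $j\in J$, $j>m$, has $t_{i_0}$-order $>(\al_m)_{i_0}$, while $c_mh_m$ has $t_{i_0}$-order exactly $(\al_m)_{i_0}$; hence the $i_0$-th component of $w$ is non-zero, so $w\neq0$ and $g\neq0$, and therefore $v(w)=\om+v(g)\in E$. Moreover $v(w)\geq\al_m$ coordinatewise, since every $\al_j$ ($j\in J$) dominates $\al_m$ and the valuation of a sum is $\geq$ the coordinatewise minimum of the valuations. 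As $v(w)\in E$ and $\al_m\in A$ we have $v(w)\neq\al_m$, so $U:=\{\,i: v(w)_i>(\al_m)_i\,\}\neq\emptyset$ and $v(w)\in\Delta^S_{\widehat U}(\al_m)$.

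Now comes the combinatorial heart: I claim $U\subseteq U_k$, where $U_k:=\{\,i:(\al_k)_i=(\al_m)_i\,\}$. Indeed, for $i\in U$ the coefficient of $t_i^{(\al_m)_i}$ in the $i$-th component of $w$ vanishes; since that coming from $c_mh_m$ is non-zero, some other term $c_jh_j$ ($j\in J$, $j>m$) must contribute, forcing $(\al_j)_i=(\al_m)_i$, and since $\al_j\geq\al_k\geq\al_m$ this gives $(\al_k)_i=(\al_m)_i$, i.e. $i\in U_k$. Thus $U_k\supseteq U\neq\emptyset$ and $\al_k\in\Delta^S_{U_k}(\al_m)$ with $m<k$, so the third hypothesis gives $\widetilde\Delta^S_{\widehat{U_k}}(\al_m)\subseteq A$. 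Since $U\subseteq U_k$ means $\widehat{U_k}\subseteq\widehat U$, we get $v(w)\in\Delta^S_{\widehat U}(\al_m)\subseteq\widetilde\Delta^S_{\widehat U}(\al_m)\subseteq\widetilde\Delta^S_{\widehat{U_k}}(\al_m)\subseteq A$, contradicting $v(w)\in E$. This contradiction establishes the linear independence.

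The step I expect to be the real obstacle is the assertion $v(w)\in E$ in the case where $w$ is not a non-zero-divisor of $R$, so that $v(w)$ is not literally an element of $\N^d$. I would deal with this coordinatewise: for every $i$ with $w_i\neq0$ one still has $v(w)_i\geq\omega_i$ (as $w_i$ belongs to the ideal generated by the $i$-th component of $F$), and for every $i$ with $w_i=0$ the leading-coefficient argument above again forces $i\in U_k$; so the coordinates where $w$ vanishes --- like those where $v(w)$ strictly exceeds $\al_m$ --- are confined to $U_k$, and the chain of inclusions ending in $A$ can be carried out with $v(w)$ replaced by a genuine element of $S$ (for instance after adding to $w$ a unit-times-$F$ correction supported on the vanishing branches, or after restricting to the block of $R$ on which $w$ is a non-zero-divisor). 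The remaining ingredients --- the behaviour of valuations of sums in a finite product of discrete valuation rings and the elementary manipulations of the $\Delta$-sets --- are routine.
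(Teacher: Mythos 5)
Your argument is correct and follows essentially the same route as the paper's own proof: there one takes $U$ directly to be the agreement set of the two smallest $\al_j$ in the support of the dependence relation (your $U_k$), invokes the third hypothesis to get $\widetilde{\Delta}^S_{\widehat{U}}(\al_1)\subseteq A$, and observes that $v(H)$ must lie in that set because all later $\al_j$ strictly exceed $\al_1$ on $\widehat{U}$ --- which is exactly your claim $U\subseteq U_k$ rearranged. The zero-divisor technicality you flag at the end is real but is silently ignored by the paper's proof as well, and your proposed repair (perturbing $w$ by a large multiple of $F$ times an element of the conductor so that its value becomes a genuine element of $S$ in the same $\widetilde{\Delta}$-set) is the standard way to handle it.
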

\begin{proof}
Call $\overline{h_j}$ the image of $h_j$ modulo $(F)$.
Suppose $ \sum^{t}_{j=1}a_j\overline{h_j} = 0$ for some $a_k \in K$ not all equal to zero. Then $H:= \sum^{t}_{j=1}a_jh_j \in (F)R$ and therefore $v(H) \not \in A$. 
It follows that at least two coefficients $a_j$ are nonzero and without loss of generality we can assume $a_1, a_2 \neq 0$. Clearly $\al_1 \not \ll \al_2$, otherwise we would have $v(H)= v(a_1h_1)= \al_1 \in A$. Thus $\al_2 \in \Delta^S_U(\al_1)$ for some $U \neq \emptyset$. Since $\al_2 \leq \al_j$ for $j > 2$, it follows that $v(H) \in \widetilde{\Delta}^S_{\widehat{U}}(\al_1) \subseteq A.$ This is a contradiction.
\end{proof}

\begin{setting}
\label{setting}
 \rm  Let $R$, $A$ and $F$ be defined as above. For an element $G \in R$ not divisible by $F$, set $R_0= K$ and for $i = 1, \ldots, N-1$, 
\begin{equation}
R_i = K[[F]]+K[[F]]G + \cdots + K[[F]]G^{i}.
\end{equation}
Similarly set $T_0 = K$ and for $i = 1, \ldots, N-1$,
\begin{equation}
T_i =  \left\lbrace G^{i} + \phi \, | \, \phi \in R_{i-1} \mbox{ and } v(G^i + \phi) \not \in  v(R_{i-1}) \ec  \right\rbrace,
\end{equation}
We want to prove that we can find $G$ in such a way that $R= R_{N-1}$ and the equality $v(T_i)=A_i$ holds for every $i$.  More precisely, we will prove the two following theorems:

\begin{theorem} 
\label{semilocalring1part}
Adopt the notation of Setting \ref{setting}. 
Then there exists $G \in R$ such that
\begin{equation}
R = K[[F]]+K[[F]]G + \cdots + K[[F]]G^{N-1}.
\end{equation}
\end{theorem}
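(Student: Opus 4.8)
The plan is to push everything into the Artinian quotient $R/FR$, reduce to the separate local factors $\mathcal O_j$, and settle each factor by a parameter-version of Proposition~\ref{discussion}. Two facts make Nakayama's lemma applicable: $F$ is a non-zerodivisor with $\om=v(F)$ of all-positive components, so $F$ lies in the maximal ideal of each $\mathcal O_j$ and $(F)$ sits in the Jacobson radical of $R$; and $R$ is module-finite over the complete local ring $K[[F]]$ (each $\mathcal O_j$ is finite over $K[[F_j]]$, $F_j$ being a system of parameters in a complete ring). Hence it suffices to find $G\in R$ whose residues $1,\overline G,\dots,\overline G^{\,N-1}$ span $R/FR$ over $K\cong K[[F]]/(F)$. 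Writing $R/FR=\prod_{j=1}^{c}\mathcal O_j/F_j\mathcal O_j$ and using the standard fact that $\dim_K\mathcal O_j/F_j\mathcal O_j$ is the sum $N_j$ of the components of $v(F_j)$ (so $\sum_jN_j=N=\dim_KR/FR$), everything reduces to the \emph{local claim}: for each $j$ there is $G_j\in\mathcal O_j$ with $\mathcal O_j=\sum_{i=0}^{N_j-1}K[[F_j]]G_j^{\,i}$, i.e. $\mathcal O_j/F_j\mathcal O_j=K[\overline{G_j}]$.

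Granting the local claim, the gluing is formal. After subtracting constants, assume $G_j\in\mathfrak m_j$; pick pairwise distinct $c_1,\dots,c_c\in K$ (this is where $K$ infinite is used) and set $G:=(G_1+c_1,\dots,G_c+c_c)$. In $R/FR$ each $\overline{G_j}$ is nilpotent of index $\le N_j$, so for $P_j(x):=\prod_{k\ne j}(x-c_k)^{N_k}$ the element $P_j(\overline G)$ lives in the $j$-th factor only and is a unit there; multiplying it by a suitable polynomial in $\overline G$ (available since $\mathcal O_j/F_j\mathcal O_j=K[\overline{G_j}]$) yields the $j$-th idempotent $\epsilon_j\in K[\overline G]$. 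Then $K[\overline G]=\bigoplus_j\epsilon_jK[\overline G]$, and since the $j$-th projection carries $K[\overline G]$ onto $K[\overline{G_j}]=\mathcal O_j/F_j\mathcal O_j$ we obtain $K[\overline G]=R/FR$; as $\dim_KR/FR=N$, the powers $1,\overline G,\dots,\overline G^{\,N-1}$ form a $K$-basis, and Nakayama's lemma concludes.

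For the local claim I would run the Weierstrass argument of Proposition~\ref{discussion} with $F_j$ in place of a coordinate: writing $\mathcal O_j=K[[X,Y]]/(H)$ and $e:=N_j$, when $v(F_j)$ is a transversal value — as it is in Proposition~\ref{discussion} (value $\e$) and in the blow-up setting of the next subsection, where $\mathcal O_j/F_j\mathcal O_j$ has the shape $K[z]/(z^{m})$ — the element $F_j$ has order one in $K[[X,Y]]$ with linear leading form not dividing the tangent cone of $H$, so $(F_j,Y)$ extends to a regular system of parameters, $K[[X,Y]]=K[[F_j,Y]]$, the leading form of $H$ still carries a nonzero $Y^{e}$-term, and Weierstrass preparation in $Y$ over $K[[F_j]]$ gives $\mathcal O_j=K[[F_j]]+K[[F_j]]Y+\dots+K[[F_j]]Y^{e-1}$, so $G_j:=Y$ works. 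I expect this local claim — equivalently, the monogenicity of $\mathcal O_j/F_j\mathcal O_j$ over $K$ — to be the only real obstacle: it is precisely where planarity and the special shape of $F$ are indispensable, whereas the reduction to the factors and the idempotent gluing are purely formal.
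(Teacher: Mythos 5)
Your route is genuinely different from the paper's. The paper first establishes Proposition \ref{propertyofG} (for a carefully built $G$, \emph{every} element of $A_j$ is realized as $v(G^j+\phi)$ with $\phi\in R_{j-1}$), then proves Theorem \ref{semilocalring1part} by picking one Ap\'ery element per level forming a chain as in Remark \ref{chainofnodes} and invoking Lemma \ref{lemmanodi} to get linear independence modulo $(F)$. Your skeleton --- Nakayama over the DVR $K[[F]]$, the decomposition $R/FR\cong\prod_j\mathcal O_j/F_j\mathcal O_j$, and the gluing of monogenic generators by pairwise distinct constants (your $G$ is essentially the paper's $(G_1,\beta+G_2)$, with all components shifted) --- is correct and much shorter for this statement in isolation; it also avoids the paper's one-factor-at-a-time induction. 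What it does not deliver is the content of Proposition \ref{propertyofG}: you get that the powers of $\overline G$ span $R/FR$, but no control over \emph{which} Ap\'ery elements occur as $v(G^j+\phi)$, and Theorem \ref{semilocalring2part} explicitly requires ``$G$ as in Theorem \ref{semilocalring1part}'' in that stronger sense. So your argument proves the stated existence claim but cannot replace the paper's construction in the rest of Section 4.

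The genuine gap is exactly where you suspected it: the local claim. You justify monogenicity of $\mathcal O_j/F_j\mathcal O_j$ only when $F_j$ is a transversal parameter (value equal to the fine multiplicity of $\mathcal O_j$). But Setting \ref{setting} only assumes $v(F)=\om\gg\boldsymbol 0$, and in the paper's actual application $F_j$ is the $j$-th component of an element whose value is the multiplicity of the curve \emph{downstairs}, which in general strictly exceeds the multiplicity of the blown-up factor $\mathcal O_j$ (in Example \ref{ex1}, $\om=(2,3,2)$ projects to $(2,3)$ on a factor of multiplicity $(2,2)$); so the instances you need are not covered by your hypothesis. The condition your Weierstrass argument really requires is $F_j\notin\mathfrak m_j^2$: then a lift of $F_j$ is a regular parameter of $K[[X,Y]]$, and preparation with respect to a complementary coordinate $L$ gives the free basis $1,\ell,\dots,\ell^{N_j-1}$ over $K[[F_j]]$. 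Some such condition is unavoidable, because for $F_j\in\mathfrak m_j^2$ with $\mathcal O_j$ singular the quotient has embedding dimension $2$ and is not monogenic, so the statement itself fails --- e.g.\ $\mathcal O=K[[t^2,t^3]]$, $F=t^4$, $N=4$: here $\mathcal O/F\mathcal O$ has basis $1,t^2,t^3,t^5$ and is not of the form $K[\overline G]$. (The paper's statement shares this over-generality, since its local base case rests on Proposition \ref{discussion}, proved for $v(x)=\e$.) To make your proof complete you must isolate and prove the local claim under a hypothesis that actually covers $v(F_j)=\om^{(j)}$ as it arises here, e.g.\ $F_j\notin\mathfrak m_j^2$, and verify that hypothesis in the blow-up setting; as written, the one step you flag as ``the only real obstacle'' is argued only in a case that the theorem's applications do not reduce to.
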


\begin{theorem}
    \label{semilocalring2part}
    Adopt the notation of Setting \ref{setting} and define $G$ as in Theorem \ref{semilocalring1part}.
Then for every $i=0, \ldots, N-1$, $$A_i = v(T_i).$$ 
\end{theorem}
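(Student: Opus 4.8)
\textbf{Proof plan for Theorem \ref{semilocalring2part}.}

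The plan is to follow the structure of the proof of \cite[Theorem 4.1]{apery} (generalized in Proposition \ref{3.8apery}), adapting it to the semilocal setting where $A$ is infinite and partitioned into the levels $A_0, \ldots, A_{N-1}$ defined via Definition \ref{deflivelli}. The two inclusions $v(T_i) \subseteq A_i$ and $A_i \subseteq v(T_i)$ will be handled separately, by a double induction: the outer one on $i$, and an inner combinatorial argument using the technical lemmas of Section 3. At each step one works modulo $(F)$, using that $R = \bigoplus_{k=0}^{N-1} K[[F]]G^k$ (Theorem \ref{semilocalring1part}), so that $R/(F)$ is a $K$-vector space of dimension $N$ with basis the images of $1, G, \ldots, G^{N-1}$; this is what makes the level count $N = \omega_1 + \cdots + \omega_d$ match the number of levels (which equals $\sum_i \omega_i$ by \cite[Theorem 4.4]{GMM}).

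For the inclusion $v(T_i) \subseteq A_i$: take $G^i + \phi \in T_i$ with $\phi \in R_{i-1}$ and $v(G^i + \phi) \notin v(R_{i-1})$. First I would show by induction that $v(R_{i-1})$ contains $\bigcup_{j < i} A_j$ together with $E = S \setminus A$ (more precisely, that the values realized by $R_{i-1}$ are exactly $\bigcup_{j<i} A_j \cup E$ up to the usual infimum-closure subtleties), so that the condition $v(G^i+\phi) \notin v(R_{i-1})$ forces the value into $A \setminus \bigcup_{j<i} A_j = \bigcup_{j \geq i} A_j$. Then I would argue that the value cannot land in a level strictly above $A_i$: if $v(G^i+\phi) \in A_j$ with $j > i$, one produces, using Lemma \ref{lemmanodominore} and Remark \ref{chainofnodes}, a chain of elements in lower levels realized by suitable $K[[F]]$-combinations of $1, G, \ldots, G^{i}$, contradicting the dimension count modulo $(F)$ via Lemma \ref{lemmanodi} (which guarantees that images of elements with values forming such a chain are $K$-linearly independent mod $(F)$). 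The key technical input here is Lemma \ref{congruencelemma}: it lets us, for elements with a large coordinate, subtract a multiple of $\om$ and drop a level, which is precisely the mechanism by which the $G$-degree filtration tracks the level filtration.

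For the reverse inclusion $A_i \subseteq v(T_i)$: fix $\al \in A_i$. By the inner induction we may assume $\bigcup_{j < i} A_j \subseteq v(T_0) \cup \cdots \cup v(T_{i-1})$, hence that every element of a lower level is a value of something in $R_{i-1}$. One must exhibit $\phi \in R_{i-1}$ with $v(G^i + \phi) = \al$. I would build $\phi$ coordinate by coordinate / coefficient by coefficient: write the candidate $G^i + \phi$ and, as long as its value is not yet $\al$, identify the coordinate where the value is still wrong; using (G2), the congruence-dropping from Lemma \ref{congruencelemma}, and the subspace machinery (Lemmas \ref{subspaceslemma}, \ref{congruencelemma}), find a lower-level element $\be$ with $v(\be)$ "filling" that coordinate, which by the inductive hypothesis is $v(\psi)$ for some $\psi \in R_{i-1}$, and replace $\phi$ by $\phi - (\text{appropriate unit})\psi$. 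The process terminates because each correction strictly increases the infimum $v(G^i+\phi) \wedge \al$ and is bounded by the conductor $\C$ (here Proposition \ref{newprop} ensures that only the part of $A$ below $\C$ matters, so finitely many corrections suffice); one then checks that the resulting element indeed lies in $T_i$, i.e.\ that $v(G^i+\phi) \notin v(R_{i-1})$, which holds because $\al \in A_i$ is not in a lower level.

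The main obstacle I expect is the bookkeeping in the reverse inclusion: in the semilocal case $v(R_{i-1})$ is not simply $\bigcup_{j<i}A_j$ (it also contains all of $E$ and may contain elements of $A_i$ and higher obtained as infima), so one must be careful to phrase the inductive hypothesis in a way that is both provable and strong enough — probably as "every $\al \in A_j$ with $j \leq i-1$ equals $v(\eta)$ for some $\eta \in R_{i-1}$, and moreover $T_j = \{\eta \in R_j : v(\eta) \in A_j\}$ realizes all of $A_j$" — and to verify that the correction step never pushes the value out of reach. Controlling the interaction between the $K[[F]]$-module structure (infinite, because $K[[F]]$-coefficients are power series) and the finite level count requires consistently reducing modulo $(F)$ and invoking Lemma \ref{lemmanodi}; getting the quantifiers right there, rather than any single estimate, is the delicate part.
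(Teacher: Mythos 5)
Your plan correctly identifies the overall architecture (induction on the level index, working modulo $(F)$ via Theorem \ref{semilocalring1part}, and using Lemmas \ref{lemmanodi}, \ref{lemmanodominore} and \ref{congruencelemma} to track levels), but as written it has two genuine gaps that the paper's proof must — and does — close by other means.

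First, your deduction that $v(G^i+\phi)\notin v(R_{i-1})$ forces $v(G^i+\phi)\in\bigcup_{j\ge i}A_j$ rests on the claim that $v(R_{i-1})$ contains $E$ (together with $\bigcup_{j<i}A_j$). That claim is false: an element $m\om+\de\in E$ with $\lambda(\de)\ge i$ need not be a value of $R_{i-1}$. The paper instead rules out $v(G^i+\phi)\in E$ by a separate contradiction: writing $v(G^i+\phi)=m\om+\de$, the case $\lambda(\de)<i$ contradicts the definition of $T_i$ directly, while the case $\lambda(\de)\ge i$ produces via Lemma \ref{lambda} an element $\be\in A_i$ with $v(G^i+\phi)\gg\be=v(G^i+\psi)$, contradicting the incomparability statement of Lemma \ref{lemmaTi}. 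Second, and more seriously, your final step (\emph{``one then checks that $v(G^i+\phi)\notin v(R_{i-1})$, which holds because $\al\in A_i$ is not in a lower level''}) is exactly the assertion $v(R_{i-1})\cap A\subseteq\bigcup_{l<i}A_l$, and it is not automatic — as stated it is circular. The paper proves it by applying Weierstrass' Preparation Theorem in $K[[x,y]]$ to an element $H=\sum_{k=0}^{i-1}a_k(F)G^k$ not divisible by $F$, obtaining $H=u\cdot(G^h+\psi)$ with $h<i$, where the factor $u$ is \emph{not} a unit of the semilocal ring $R$ (only its first component is); one then needs Lemmas \ref{level-u} and \ref{levelofproduct-u*f} to bound the level shift caused by multiplication by $u$ in terms of its degree in $G$. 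Your plan contains no mechanism playing this role, and without it both inclusions break down.

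Finally, the half of the statement you describe as a coefficient-by-coefficient correction process is, in the paper, the separate Proposition \ref{propertyofG}, which is the technical heart of the section. Your iterative scheme is in the right spirit for elements of the form $(\al^{(1)},\boldsymbol{0})$, but it omits the choice of $G=(G_1,\beta+G_2)$ with $\beta$ generic, the case distinction according to whether $\al^{(1)}$ lies in $A^{(1)}$ or not (the latter needing the relation $G_1^{N_1}=\sum h_i(F_1)G_1^i$), and above all the multiplicative step $\al=v((G^k+\Phi)(G^l+\Psi))$ for elements with both components nonzero, which relies on the additivity of $\lambda$ over the direct product (Theorem \ref{thmnonlocallevels}). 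These are not bookkeeping issues but the substantive content of the proof.
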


\begin{oss}
    \label{remarklocalcase}
   In the case where $R = \mathcal{O}_1$ is local these results follow by  Proposition \ref{discussion}, Remark \ref{discussion2} and Proposition \ref{3.8apery}. 
 \end{oss}
 
By the above Remark \ref{remarklocalcase}, the results of the two theorems hold in particular in the case $d=1$. Hence, to prove the Theorems \ref{semilocalring1part} and \ref{semilocalring2part}, we can work by induction on $d$, assuming that $R$ is not local. It is sufficient then, slightly changing the notation, to assume that $R \cong \mathcal{O}_{1} \times  \mathcal{O}_2$ with $\mathcal{O}_{1}$ not necessarily local and $\mathcal{O}_{2}$ local. The value semigroup of $R$ will be denoted by $S= S_1 \times S_2$ with $S_i \subseteq \N^{d_i}$ and $d=d_1+d_2$.

We can thus write $F=(F_1, F_2)$ and $\om= (\om^{(1)}, \om^{(2)})$. Also $A^{(i)}$ will denote the Ap\'{e}ry set of $S_i$ with respect to $\om^{(i)}\in S_i$ (the projection of $\om$ with respect to the coordinates in $S_i$).  The number of levels of $A^{(i)}$ is equal to $N_i$, where $N_i$ is the sum of the coordinates of $\om^{(i)}$.

For $h = (h_1, h_2) \in R$, we let $v(h)= (v^{(1)}(h_1), v^{(2)}(h_2)) $ denote the value of $h$ in the semigroup $S$.
\end{setting}

The next proposition explains how to construct the power series $G$ in the ring $R$.

\begin{prop}
\label{propertyofG}
 Adopt the notation of Setting \ref{setting}. Then there exists $G \in R$ such that, for every $j=0, \ldots, N-1$ and $\al \in A_j$, we can find $\phi \in R_{j-1}$ such that $v(G^j+\phi)= \al$.
\end{prop}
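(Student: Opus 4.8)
The strategy is to build $G$ by prescribing, level by level, an element of $R$ realizing each value $\al \in A_j$ as the value of $G^j + \phi$ for a suitable $\phi \in R_{j-1}$, and then to patch these choices together using a genericity/infinite-field argument. First I would use Remark \ref{chainofnodes} to fix, for the top level $A_{N-1}$, a chain $\boldsymbol{0}=\al^{(0)} < \al^{(1)} < \cdots < \al^{(N-1)}$ with $\al^{(i)} \in A_i$ satisfying the compatibility condition on the sets $\widetilde{\Delta}^S_{\widehat{U}}$; by Lemma \ref{lemmanodi}, any elements $h_1,\dots,h_t \in R$ with $v(h_j)=\al^{(j)}$ have $K$-linearly independent images modulo $(F)$. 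The idea is then to construct $G$ so that $v(G^j + \phi_j) = \al^{(j)}$ for appropriate $\phi_j \in R_{j-1}$, which will simultaneously give Theorem \ref{semilocalring1part} (the powers $1, G, \ldots, G^{N-1}$ generate $R$ as a $K[[F]]$-module, since their images mod $(F)$ are $N$ linearly independent elements of the $N$-dimensional space $R/(F)$) and the inclusion $A_i \subseteq v(T_i)$ needed for Theorem \ref{semilocalring2part}.

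Next, to get every $\al \in A_j$ (not just the chosen chain element $\al^{(j)}$), I would argue inductively on $j$. Working in the decomposition $R \cong \mathcal{O}_1 \times \mathcal{O}_2$ from the Setting, write $G = (G_1, G_2)$ and $F = (F_1, F_2)$; by the inductive hypothesis on $d$ applied to each factor we already have elements $G_i \in \mathcal{O}_i$ whose powers generate $\mathcal{O}_i$ and realize the levels $A^{(i)}_\bullet$ via Proposition \ref{3.8apery} / Theorems \ref{semilocalring1part}--\ref{semilocalring2part} in lower dimension. By Theorem \ref{thmnonlocallevels}, the level of $\al = (\al^{(1)}, \al^{(2)})$ in $A$ is $\lambda_1(\al^{(1)}) + \lambda_2(\al^{(2)})$, so an element of $A_j$ splits as a pair whose component levels sum to $j$; using the $K[[F_i]]$-module presentations of the $\mathcal{O}_i$ one assembles a polynomial $\phi \in R_{j-1}$ in $G$ (of degree $< j$, with coefficients in $K[[F]]$ chosen coordinatewise) so that $G^j + \phi$ has value exactly $\al$. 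Here the key point is to control the leading behavior in each branch: since $K$ is infinite, a generic choice of the finitely many coefficients appearing at the relevant orders avoids all the "bad" cancellations that would push the value out of $A$ or into $v(R_{j-1})$, while the compatibility condition built into the chain (and Lemma \ref{lambda}, Lemma \ref{lemmanodominore}) ensures the value does not jump to a wrong level.

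I expect the main obstacle to be exactly this last step: showing that a single $G$ can be chosen so that the value conditions $v(G^j + \phi) = \al$ hold \emph{simultaneously} for all $j$ and all $\al \in A_j$. Realizing one target value is a finite genericity statement, but one must check that the (finitely many, for each fixed level, by Proposition \ref{newprop} which reduces the relevant part of $A$ to elements $\le \boldsymbol{c_E}$) constraints imposed at level $j$ are compatible with those already imposed at lower levels — i.e. the coefficients of $G$ and of the $\phi$'s are determined order-by-order without conflict. The tool for this is Lemma \ref{congruencelemma}: it provides, for large coordinates, elements $\de \in \Delta^E_i(\al)$ of the form $m\om + \be$ with $\be \in A$ of strictly smaller level, which is precisely what lets the induction on $j$ propagate and guarantees that adding lower-degree correction terms $\phi$ can always be done within $R_{j-1}$ without disturbing the values already fixed. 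Once $G$ is constructed with the property in the statement, Theorems \ref{semilocalring1part} and \ref{semilocalring2part} follow formally (the reverse inclusion $v(T_i) \subseteq A_i$ coming from counting levels and the fact that distinct $T_i$ contribute values in distinct levels, together with Lemma \ref{lemmanodi}).
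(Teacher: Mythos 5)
Your overall strategy is the same as the paper's (induction on the number of branches and on the level $j$, the decomposition $R\cong\mathcal O_1\times\mathcal O_2$, a generically shifted $G=(G_1,\beta+G_2)$, Theorem \ref{thmnonlocallevels} to split levels, and Lemma \ref{congruencelemma} for elements with large coordinates), but two essential steps are missing. First, each level $A_j$ is \emph{infinite}, and your appeal to Proposition \ref{newprop} does not close this gap: that proposition says the levels are \emph{determined} by the finite set $\{\al\in A:\al\le\boldsymbol{c_E}\}$, not that realizing the finitely many small values as $v(G^j+\phi)$ automatically realizes the infinitely many large ones. For $\al^{(1)}$ with $\alpha^{(1)}_i>\eta_i$ the paper must run an explicit construction: using Lemma \ref{congruencelemma} it builds $\psi\in R_{j-1}$ with $v(\psi)\in\Delta^E_U((\te,\boldsymbol 0))$, iterates $G^j+\phi_{\te}+t_k\psi$ to push the $U$-coordinates arbitrarily high inside the subspace $\widetilde\Delta^{\N^{d_1}}_V(\te)$, obtains some $\be\ge\al^{(1)}$ realized as $v(G^j+\phi_{\be})$, and then cuts back down to $\al^{(1)}$ exactly by taking the infimum with a second element $(\de',\boldsymbol\tau)\in\Delta^E_U((\al^{(1)},\boldsymbol 0))$ of value in $v(R_{j-1})$. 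Saying that Lemma \ref{congruencelemma} ``lets the induction propagate'' names the tool but not this mechanism, which is the heart of the proof.

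Second, you do not treat the elements $(\al^{(1)},\boldsymbol 0)\in A_j$ with $\al^{(1)}\notin A^{(1)}$. Theorem \ref{thmnonlocallevels} computes the level via the function $\lambda$ on all of $S_1\times S_2$, so a component of an Ap\'ery element of the product need not lie in the Ap\'ery set of the factor; your inductive hypothesis on $\mathcal O_1$ only realizes values in $A^{(1)}$. For these elements one writes $\al^{(1)}=m\om^{(1)}+\te$ with $\te\in A^{(1)}_k$, $k<j$, and corrects with $zF^m(G^k+\psi)$; the boundary case $j=N_1$ additionally requires the integral relation $G_1^{N_1}=\sum_i h_i(F_1)G_1^i$ together with a further genericity condition on $\beta$ so that the corresponding expression in the second component has value zero. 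Without this case the product argument you use for $\al=(\al^{(1)},\al^{(2)})$ with both components nonzero has nothing to multiply. The remaining ingredients of your sketch (the chain from Remark \ref{chainofnodes} and Lemma \ref{lemmanodi}) belong to the deduction of Theorem \ref{semilocalring1part} from this proposition rather than to its proof.
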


\begin{proof}
We divide the proof in three parts. First we prove the result for elements of the
form $\al = (\al^{(1)}, \boldsymbol{0})$ with $\al^{(1)} \in A^{(1)}$, then we
consider elements of the form $\al = (\al^{(1)}, \boldsymbol{0})$ with $\al^{(1)}
\not \in A^{(1)}$, and by analogy
we obtain the same results also for all the elements of the form $\al =
(\boldsymbol{0}, \al^{(2)})$ with $\al^{(2)} \in S_2$  (our proof is independent
of whether $S_i$ is local or not). Finally, we will deal with the case $\al =
(\al^{(1)}, \al^{(2)})$ with $\al^{(1)}, \al^{(2)} \neq \boldsymbol{0}$.

\medskip
As mentioned in the above paragraph, by induction on $d$, we can assume that
Theorems \ref{semilocalring1part} and \ref{semilocalring2part} hold for $S_1$ and
$S_2$ with respect to the elements $F_1$ and $F_2$. Hence, for $i=1,2$, there
exists $G_i \in \mathcal{O}_{i}$ such that
$$ \mathcal{O}_i = K[[F_i]]+ K[[F_i]]G_i + \cdots +K[[F_i]] G_i^{N_i-1}. $$

Before to treat each one of the described cases we prove the next statement:

\medskip
\begin{lemma}\label{newlemma1}
Let $L$ be a finite set of elements of the form
$\al = (\al^{(1)},\boldsymbol{0})$, $\al^{(1)}\in A^{(1)}_j$, $j\le N_1-1$. Then,
for all but finitely many choices of $\beta\in K$ we have
$$
v(G^j+\phi(F,G))=\al
$$
for some $\phi\in R_{j-1}$ and $G=(G_1,\beta+G_2)$.
\end{lemma}

\begin{proof}[Proof of the Lemma.]
Let $\al= (\al^{(1)}, \boldsymbol{0})$ with $\al^{(1)} \in A^{(1)}_j$.
Using the fact that both Theorems \ref{semilocalring1part} and
\ref{semilocalring2part} hold for $S_1$, we can find $\phi(F_1, G_1) \in
\mathcal{O}_1$  of degree at most $j-1$ in $G_1$ such that $\al^{(1)} =
v^{(1)}(G_1^j + \phi(F_1, G_1)) $. Clearly, since $K$ is infinite, for all but
finitely many elements $\beta \in K$, the value $v^{(2)}$ of $(\beta + G_2)^j +
\phi(F_2, \beta + G_2)$ is equal to the zero element of $S_2$. For all these
choices of $\beta$ we have $\al = v(G^j+ \phi(F,G))$.
 Hence, fixing any finite set $L$, consisting of elements of the form
$(\al^{(1)}, \boldsymbol{0})$ with $\al^{(1)} \in A^{(1)}$, we can choose the
element $\beta \in K$ in such a way that all the elements in $L$ satisfy the
thesis of this lemma.
\end{proof}

Modifying $G$ as $(\beta+G_1, G_2)$ we can clearly obtain the analogous result, for
infinitely many choices of the same  $\beta$, for a finite set $L'$ consisting of
elements of the form $(\boldsymbol{0}, \al^{(2)})$ with $\al^{(2)} \in A^{(2)}$.

Let us now prove the proposition, considering the different described  cases for
$\al\in S$.

\medskip

\bf Case 1: \rm $\al= (\al^{(1)}, \boldsymbol{0})$ with $\al^{(1)} \in A^{(1)}_j$
(or analogously $\al= (\boldsymbol{0}, \al^{(2)})$ with $\al^{(2)} \in A^{(2)}_j$). \\
For $j=0$ the result is clear since we must have $\al=\boldsymbol{0}= v(1)$.
By induction  we can assume that $\al^{(1)} \in A^{(1)}_j$ for $j >0 $ and the
thesis holds for any $\be^{(1)}\in A^{(1)}_k$ with $k < j$.

Choosing the element $\he$ for the semigroup $S_1$ according to Lemma
\ref{congruencelemma}, by the above Lemma \ref{newlemma1}, we can assume also that
the thesis holds
for all the elements $(\al^{(1)}, \boldsymbol{0})$ with $\al^{(1)} \in A^{(1)}$ and
$(\al^{(1)}, \boldsymbol{0}) \leq (\he,\boldsymbol{0})$ (these elements form
obviously a finite set).

Thus we can assume that the element $\al$ is
such that $\alpha^{(1)}_i> \eta_i$ for some $i$.
Let $\te = \al^{(1)}\wedge \he$, $U =\{i : \alpha^{(1)}_i \ge \eta_i\}$, and
$V = \{i : \alpha^{(1)}_i< \eta_i\}= I_1\setminus U$.
Then, one has (see Proposition \ref{newprop})
$\al^{(1)} \in \widetilde{\Delta}^{\N^{d_1}}_V(\te) \subseteq A^{(1)}_j$ and also
$\widetilde{\Delta}^{\N^{d_1}}_V(\al^{(1)}) \subseteq
\widetilde{\Delta}^{\N^{d_1}}_V(\te) \subseteq A^{(1)}_j$.  Note that
every element of $\widetilde{\Delta}^{\N^{d_1}}_V(\te) \cup \lbrace \te \rbrace$ (in
particular $\al^{(1)}$) satisfies the assumptions of Lemma \ref{congruencelemma}
choosing any index $i \in U$.


Now, let us prove the next:

\begin{lemma}\label{newlemma2}
Let $\eps \in \widetilde{\Delta}^{\N^{d_1}}_{V}(\te)\subset A^{(1)}_j$. Then
there exists $(\de, m \om^{(2)})\in \Delta^E_{U}(\eps,\boldsymbol{0})$ with $m\ge 1$, such
that
$(\de, m \om^{(2)}) = v(\psi)$ for some $\psi\in R_{j-1}$.
\end{lemma}


\begin{proof}[Proof of the Lemma.]
Let $i \in U$. By Lemma \ref{congruencelemma} there exists
an element $\de^{(i)} \in \Delta^{E_1}_i(\eps)$ such that $\de^{(i)} = m_i
\om^{(1)} + \be^{(i)}$, with $m_i \geq 1$ and $\be^{(i)} \in A^{(1)}_{k_i}$ with
$k_i < j$. By the inductive hypothesis on $j$ we know that $(\be^{(i)},
\boldsymbol{0})= v(\Phi_i)$ with $\Phi_i \in R_{j-1}$.
Since $\om  \gg \boldsymbol{0}$  we get
$$ m_i \om + (\be^{(i)}, \boldsymbol{0}) = (\de^{(i)}, m_i \om^{(2)}) =
v(F^{m_i}\Phi_i) \in v(R_{j-1}) \cap \Delta^E_i((\eps, \boldsymbol{0})). $$
Setting $m= \min_{i \in U} \{m_i\}$, we consider the infimum
$$
\bigwedge_{i \in U} (\de^{(i)}, m_i \om^{(2)}) = (\de, m \om^{(2)} ) \in
\Delta^E_U((\eps, \boldsymbol{0}) )\; .
$$
For some choice of elements $z_i \in K$, we know that $(\de, m \om^{(2)} ) =
v(\sum_{i \in U} z_i F^{m_i}\Phi_i)$. Set $\psi:= \sum_{i \in U} z_i F^{m_i}\Phi_i
\in R_{j-1}$.
Note that, if $j\in V$ then $\delta^{(i)}_j > \epsilon_j$ for all $i\in U$ and therefore
$\delta_j>\epsilon_j$; on the other hand, if $j\in U$ then $\delta_j=\epsilon_j$.
\end{proof}

Now, let us apply the above Lemma \ref{newlemma2} to the element $\eps=\te$.
Since $\te \leq \he$ we know that $(\te , \boldsymbol{0}) = v(G^j + \phi_{\te})$ for some
$\phi_{\te} \in R_{j-1}$.
Let us fix an index  $k \in U$. Since $\delta_k=\theta_k$
we can choose $t_k \in K$ such that
$v(G^j + \phi_{\te}+ t_k\psi )= (\te', \boldsymbol{0}) > (\te, \boldsymbol{0})$
with $\theta'_k > \theta_k$. Note that, if $j\in V$ then
$\theta'_j =\theta_j$, hence $\te'\in \widetilde{\Delta}^{\N^{d_1}}_V(\te)$.

Iterating this process, replacing each time $\te$ by $\te'$ and possibly using the other
indices  $k\in U$, we can find an element $\te'\in \widetilde{\Delta}^{\N^{d_1}}_V(\te)$ with arbitrarily large coordinates with respect to the
indices in $U$ such that 
$(\te', \boldsymbol{0})= v(G^j+ \phi_{\be})$ for some  $\phi_{\be} \in R_{j-1}$.

Going back to the element $\al^{(1)} \in \widetilde{\Delta}^{\N^{d_1}}_V(\te)$,
in particular we can
find $\be \geq \al^{(1)}$ such that $\be \in \widetilde{\Delta}^{\N^{d_1}}_V(\te)$
and $(\be, \boldsymbol{0})= v(G^j+ \phi_{\be})$ with $\phi_{\be} \in R_{j-1}$.
Explicitly, we can say that $\be \in \Delta^{S_1}_W(\al^{(1)})$ with $W \supseteq V$.

\medskip

Furthermore, by the Lemma \ref{newlemma2} applied to the element
$\eps=\al^{(1)}$ we can construct an element
$(\de', \boldsymbol{\tau}) \in \Delta^E_{U}((\al^{(1)}, \boldsymbol{0}))$  such that
$(\de', \boldsymbol{\tau}) = v(\psi')$ with $\psi' \in R_{j-1}$ (and
$\boldsymbol{\tau} \gg \boldsymbol{0}$). It is easy to observe that $ (\al^{(1)},
\boldsymbol{0}) =  (\be, \boldsymbol{0}) \wedge  (\de', \boldsymbol{\tau})$.
Thus we can choose $z \in K$
such that $v(G^j+ \phi_{\be} + z\psi')= (\al^{(1)}, \boldsymbol{0})$. This shows
that $(\al^{(1)}, \boldsymbol{0})$ is the value of some element of the form $G^j+
\phi$ with $\phi \in R_{j-1}$ and completes the proof of the Case 1.

\medskip

 \bf Case 2: \rm $(\al^{(1)}, \boldsymbol{0}) \in A_j$ with $\al^{(1)} \not \in
A^{(1)}$ (or analogously $\al= (\boldsymbol{0}, \al^{(2)}) \in A_j$ with $\al^{(2)} \not \in A^{(2)}_j$). \\
Suppose $\al^{(1)}$ to be nonzero.
By Theorem \ref{thmnonlocallevels} also in this case we have $\lambda(\al^{(1)})=
j > 0$. By definition of $\lambda$ we can write $\al^{(1)}) = m \om^{(1)}+\te $ for $m \geq 1 $ and $\te \in A^{(1)}_k$ with $k < j$.
 If $j < N_1$, then by Lemma \ref{lambda}, there exists $\de \in A^{(1)}_j$ such
that $\al^{(1)} < \de$.
As a consequence of Case 1, we know that $(\de, \boldsymbol{0}) = v(G^j
+\phi)$ with $\phi \in R_{j-1}$ and $(\te, \boldsymbol{0}) = v(G^k +\psi)$ with
$\psi \in R_{k-1}$.
Since $K$ is infinite we can find a nonzero constant $z \in K$ such that
$(\al^{(1)}), \boldsymbol{0})= v(G^j + \phi + zF^m (G^k+ \psi))$. The result now
follows since by construction $\phi + zF^m (G^k+ \psi) \in R_{j-1}$.
 If instead $j=N_1$, we use the fact that we can express $G_1^{N_1}=
\sum_{i=0}^{N_1-1}h_{i}(F_1)G_1^i$ and the choice of the element $\beta \in K$
 can be made in such a way that $$v^{(2)}\left( (\beta+G_2)^{N_1}-
\sum_{i=0}^{N_1-1}h_{i}(F_2)(\beta+G_2)^i \right)=0.$$ Thus 
writing again $(\te, \boldsymbol{0}) = v(G^k +\psi)$ with $\psi \in R_{k-1}$,
we obtain
$$\al= (\al^{(1)}, \boldsymbol{0})= v\left(G^{N_1}- \sum_{i=0}^{N_1-1}h_{i}(F)G^i +
F^m(G^k+ \psi)\right).$$ As before $ \sum_{i=0}^{N_1-1}h_{i}(F)G^i + F^m(G^k+ \psi)
\in R_{j-1}.$


\medskip
In both Cases 1 and 2, we get the same results for the elements of the form $\al =
(\boldsymbol{0}, \al^{(2)})$. Indeed, we can proceed in the same way working over
the components corresponding to $S_2$ and replacing $G$ by $G - (\beta,\beta)$ in
all the formulas (again the choice of $\beta$ at the beginning of the proof can be
made generic enough to satisfy all the needed conditions). We finally consider the
general case: 

\medskip
\bf Case 3: \rm $\al=(\al^{(1)}, \al^{(2)}) \in A_j$ with $\al^{(1)}, \al^{(2)}
\neq 0$.  \\
We can say that $\lambda(\al^{(1)}, \boldsymbol{0})=k$, $\lambda(\boldsymbol{0},
\al^{(2)})=l$ with  $k,l \geq 1$. By Theorem \ref{thmnonlocallevels}, $k+l=j$.
By what proved in the previous cases $(\al^{(1)}, \boldsymbol{0}) = v(G^{k} +
\Phi)$ and $(\boldsymbol{0}, \al^{(2)}) = v(G^l + \Psi)$ for opportune choices of
$\Phi \in R_{k-1}$ and $\Psi \in R_{l-1}$. It follows that $\al = v((G^{k} + \Phi)
(G^{l} + \Psi))= v(G^j + \xi)$ with $\xi \in R_{j-1}$.

This concludes the proof of the proposition.
\end{proof}

We prove now Theorem \ref{semilocalring1part}.




\begin{proof}(proof of Theorem \ref{semilocalring1part}) \\
We know that $R$ is a $K[[F]]-$module and, since the quotient ring $ \frac{R}{(F)R} $ is a $K$-vector space of dimension $N_1+N_2$, the ring $R$ is minimally generated as module over $K[[F]]$ by $N=N_1+N_2$ elements. For $H \in R$, denote by $\overline{H}$ the image of $H$ in the quotient $ \frac{R}{(F)R} $.

Let $G$ be defined as in Proposition \ref{propertyofG}.
To prove the theorem we need to show that 
$$ \overline{1}, \overline{G},\overline{G}^2, \ldots, \overline{G}^{N_1+N_2-1} $$ are linearly independent over $K$.
We use now Remark \ref{chainofnodes} to construct a sequence of elements of  $S$ $$ \boldsymbol{0}= \al^{(0)} < \al^{(1)} < \cdots < \al^{(j)} < \cdots < \al^{(N-2)} < \al^{(N-1)}, $$ such that 
$\al^{(i)} \in A_i$ and, for every $k < j$, if $\al^{(j)} \in \Delta^S_U(\al^{(k)})$ for some $U \neq \emptyset$, then $\widetilde{\Delta}^S_{\widehat{U}}(\al^{(k)}) \subseteq A$. By Proposition \ref{propertyofG}, $\al^{(j)}$ is the value of an element of the form $h_j:= G^j +\phi$ with $\phi \in R_{j-1}$. The elements $h_0, \ldots, h_{N-1}$ satisfy the hypothesis of Lemma \ref{lemmanodi}. Thus their images modulo $(F)$ are linearly independent over $K$. By definition of the subsets $R_j$, it follows that also $ \overline{1}, \overline{G},\overline{G}^2, \ldots, \overline{G}^{N_1+N_2-1} $ are linearly independent over $K$. This proves the theorem. 
\end{proof}


Before proving Theorem \ref{semilocalring2part}, we need to prove several lemmas.

\begin{lemma}
    \label{lemmaTi}
    Take the notation of Setting \ref{setting}.
    Let $\al, \be \in v(T_i)$ for some $i=0, \ldots, N-1$.  If $\al\neq \be$, then $\al$ and $\be$ are incomparable with respect to the partial order relation $\le\le$.
\end{lemma}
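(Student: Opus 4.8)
The statement is that two distinct values in the same level set $v(T_i)$ cannot be comparable in the strong order $\le\le$; equivalently, neither can dominate the other. The natural strategy is a proof by contradiction combined with the algebraic structure of the elements of $T_i$: every element of $T_i$ has the form $G^i + \phi$ with $\phi \in R_{i-1}$, and crucially its value does \emph{not} lie in $v(R_{i-1})$. So suppose $\al, \be \in v(T_i)$ with $\al \ll \be$ (the case $\al = \be$ being excluded by hypothesis, and $\al \le\le \be$ with $\al \ne \be$ forcing $\al \ll \be$ by definition of $\le\le$). Pick $g = G^i + \phi$ and $h = G^i + \psi$ in $R$ with $v(g) = \al$, $v(h) = \be$, and $\phi, \psi \in R_{i-1}$. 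The idea is to look at the difference $g - h = \phi - \psi \in R_{i-1}$.

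The key step: since $\al = v(g) \ll v(h) = \be$, we have $v(g - h) = v(g) = \al$ (strict domination on every coordinate means the lower-valued term wins in every component, so no cancellation of the leading term can occur — this uses that $\be_k > \al_k$ for all $k$). But $g - h = \phi - \psi \in R_{i-1}$, hence $\al = v(g-h) \in v(R_{i-1})$. This contradicts the defining property of $T_i$, namely that $v(G^i + \phi) \not\in v(R_{i-1})$, which gives $\al = v(g) \not\in v(R_{i-1})$.

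I expect the only subtlety — the ``main obstacle,'' though it is minor — is making the valuation-of-a-difference argument fully rigorous in the $\N^d$-valued setting: one must note that $v$ here is the componentwise valuation $v(h) = (v^{(1)}(h_1), \ldots)$ coming from the product of DVRs, so on each coordinate $k$ one has the ultrametric inequality $v_k(g-h) \ge \min\{v_k(g), v_k(h)\} = \al_k$ with equality because $v_k(g) = \al_k < \be_k = v_k(h)$ strictly. Assembling these coordinatewise equalities yields $v(g - h) = \al$ exactly. One should also record that $R_{i-1}$ is a $K[[F]]$-submodule of $R$ (indeed $R_{i-1} = K[[F]] + K[[F]]G + \cdots + K[[F]]G^{i-1}$), so it is closed under subtraction and $\phi - \psi \in R_{i-1}$ is legitimate. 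With these observations the contradiction is immediate and the proof is complete. Note that the symmetric roles of $\al$ and $\be$ mean we lose no generality assuming $\al \ll \be$; the case $\be \ll \al$ is identical after swapping.
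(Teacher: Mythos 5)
Your argument is exactly the paper's proof: assume $\al \ll \be$, subtract the two representatives $G^i+\phi$ and $G^i+\psi$ to get $\phi-\psi \in R_{i-1}$ with value $\al$, contradicting the defining condition of $T_i$. The extra remarks on the coordinatewise ultrametric equality and on $R_{i-1}$ being closed under subtraction are correct and just make explicit what the paper leaves implicit.
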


\begin{proof}
    Write $\al= v(G^i+\phi)$ and $\be=v(G^i+\psi)$ for $\phi, \psi \in R_{i-1}$. If by way of contradiction $\al \ll \be$, we would have $\al = v(G^i+\phi-G^i-\psi) = v(\phi-\psi) $ and this would contradict the definition of $T_i$.
\end{proof}



\begin{lemma}
    \label{level-u}
    Let $R$ be the local ring of a plane curve and let $S$ be its value semigroup. Let the elements $F,G$ and the subsets $R_i, T_i$ be defined as in Setting \ref{setting} and Remark \ref{remarklocalcase}. For $j \leq N-1$, let $\phi= \sum_{k=0}^j a_k(F)G^k \in R$ be a power series not divisible by $F$. 
  Then $\lambda(v(\phi)) \leq j$. 
\end{lemma}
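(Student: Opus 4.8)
\textbf{Proof plan for Lemma \ref{level-u}.}

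The plan is to reduce the general statement to a controlled use of Proposition \ref{3.8apery} and Lemma \ref{lambda}. Write $\phi = \sum_{k=0}^{j} a_k(F) G^k$ with $\phi$ not divisible by $F$. The first step is to locate the ``effective degree'' of $\phi$: let $m \le j$ be the largest index for which $v(a_m(F)G^m)$ is minimal among the summands, i.e. $v(a_m(F)G^m) \le v(a_k(F)G^k)$ for all $k$ with $a_k \ne 0$, choosing $m$ maximal if there are ties. If $v(\phi)$ equals this minimal value, we are quickly done: since $a_m$ is then a unit (otherwise $F \mid \phi$ after collecting, contradicting non-divisibility — this needs a short argument using that $v(F) = \om \gg 0$ and the structure of $R$), $v(\phi) = v(a_m) + m\,v(G)$. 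Then $v(\phi) \in v(R_m)$, and because of how the sets $R_i$ and $T_i$ are built, $v(\phi)$ lies in $v(T_l)$ for some $l \le m \le j$; by Proposition \ref{3.8apery}, $v(T_l) = A_l$, so $\lambda(v(\phi)) = l \le j$.

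The second step handles the case where cancellation occurs, so that $v(\phi)$ strictly dominates (or at least is strictly larger in some coordinate than) the minimal summand value $\boldsymbol{\mu} := v(a_m(F)G^m)$. Here I would argue as follows: $v(\phi) \ge \boldsymbol{\mu}$ coordinatewise is false in general, but one always has that $v(\phi)$ is an element of $S$ dominating some element realized by a partial sum. More carefully, the correct tool is Lemma \ref{lambda}: it suffices to produce an element $\boldsymbol{\theta} \in S$ with $v(\phi) \ge \boldsymbol{\theta}$ and $\lambda(\boldsymbol{\theta}) \le j$, or to produce an element of $A_j$ that is $\ge v(\phi)$. The natural candidate is $\boldsymbol{\mu}$ itself together with the incomparability structure: if $v(\phi) \ge \boldsymbol{\mu}$ then since $\boldsymbol{\mu} \in v(R_m) \subseteq v(R_j)$ and (by the reasoning of the first step applied to the minimal-value summand, which we may isolate after replacing $G$ by a suitable combination) $\lambda(\boldsymbol{\mu}) \le m \le j$, Lemma \ref{lambda} gives $\lambda(v(\phi)) \le j$ directly. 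The subtlety is that cancellation can push $v(\phi)$ into a position that does not dominate $\boldsymbol{\mu}$ in every coordinate; in that case one writes $\phi = \phi' + \phi''$ grouping the summands of minimal value, notes $v(\phi') = \boldsymbol{\mu}$ (generic in the coefficients, or by a direct count), and uses property (G2) together with the fact that $v(\phi)$ and $\boldsymbol{\mu}$ agree in the coordinates where no cancellation happens, to find an element of $S$ below $v(\phi)$ of level $\le j$.

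I expect the main obstacle to be precisely this bookkeeping around cancellation: making rigorous the claim that, for a power series $\phi$ of $G$-degree $\le j$ over $K[[F]]$ not divisible by $F$, the value $v(\phi)$ cannot ``escape'' to level $> j$. The cleanest route is probably to induct on $j$: the degree-$0$ case is $\phi = a_0(F)$, a unit times a power of $F$, so $v(\phi) = k\om$ for some $k \ge 0$ and $\lambda(k\om) = 0 \le 0$ by the defining property of the Apéry set ($k\om \in E$ for $k \ge 1$ forces... — wait, actually $\lambda(k\om)$ small because $k\om$ dominates $\boldsymbol 0 \in A_0$ only, so $\lambda(k\om)=0$ only if $k=0$; for $k \ge 1$ one still has $\lambda \le$ something, but here the degree-$0$ polynomial divisible by $F$ is excluded, so $k=0$ and $\lambda(v(\phi))=0$). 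For the inductive step, if the top coefficient $a_j(F)$ is a unit we are in the situation of Proposition \ref{3.8apery}-type analysis; if $a_j(F)$ is not a unit, write $a_j(F) = F^s u(F)$ with $u$ a unit and $s \ge 1$, factor and use that $v(F^s u(F) G^j) = s\om + v(G^j)$ lies in a level $\le j$ (since $v(G^j) \in A_{l}$ with $l \le j$ by Proposition \ref{3.8apery}, and adding multiples of $\om$ does not increase the level by more than... — this itself requires a small lemma, namely that $\lambda(m\om + \boldsymbol\beta) \le \lambda(\boldsymbol\beta)$ when $\boldsymbol\beta \in A$, or rather $\le$ with a controlled bound), combined with the inductive hypothesis applied to $\phi - a_j(F)G^j$. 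Assembling these pieces and invoking Lemma \ref{lambda} at the end to compare $v(\phi)$ with the levels of the constituent terms should complete the argument.
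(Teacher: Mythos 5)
Your proposal has a genuine directional error that undermines the whole strategy. To bound $\lambda(v(\phi))$ \emph{from above} you would need, by Lemma \ref{lambda}, an element of $A_j$ lying \emph{above} $v(\phi)$; but throughout the second step you instead produce elements $\boldsymbol{\theta}$ \emph{below} $v(\phi)$ with $\lambda(\boldsymbol{\theta})\le j$ and invoke monotonicity. Monotonicity goes the other way: $\boldsymbol{\mu}\le v(\phi)$ only gives $\lambda(\boldsymbol{\mu})\le\lambda(v(\phi))$, a lower bound, so your ``Lemma \ref{lambda} gives $\lambda(v(\phi))\le j$ directly'' does not follow (take $\boldsymbol{\theta}=\boldsymbol{0}$ to see that such a comparison carries no information). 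The auxiliary claim you flag, $\lambda(m\om+\boldsymbol{\beta})\le\lambda(\boldsymbol{\beta})$, is in fact false: for $\boldsymbol{\beta}\in A$ and $m\ge 1$ one has $m\om+\boldsymbol{\beta}\in E$ and, by the definition of $\lambda$, $\lambda(m\om+\boldsymbol{\beta})\ge\lambda(\boldsymbol{\beta})+1$. Finally, the assertion in your first step that a minimal-value summand forces $v(\phi)\in v(T_l)$ for some $l\le m$ ``because of how the sets $R_i$ and $T_i$ are built'' is essentially the statement $v(R_m)\cap A\subseteq\bigcup_{l\le m}A_l$, which is equivalent to the lemma you are trying to prove.

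The cancellation case, which you correctly identify as the main obstacle, is exactly where the paper's proof does its work, and it does so by contradiction rather than by comparing summand values. After the Weierstrass reduction to $\phi=G^j+\psi$ with $\psi\in R_{j-1}$ (a step you also propose), one assumes $\lambda(\al)>j$ for $\al=v(G^j+\psi)$, uses Lemma \ref{lemmanodominore} to pick $\be\in A_j$ with $\be<\al$ and with control on $\widetilde{\Delta}^S_{\widehat U}(\be)$, realizes $\be=v(G^j+\xi)$ with $G^j+\xi\in T_j$ via $A_j=v(T_j)$, and then examines $\de=v(\psi-\xi)$: the equality $\al=v\bigl((G^j+\xi)+(\psi-\xi)\bigr)$ together with $\al>\be$ forces $\de>\be$ and $\de\in A$, so $\lambda(\de)\ge j$ by Lemma \ref{lambda}; but $\psi-\xi\in R_{j-1}$ is not divisible by $F$, so the inductive hypothesis gives $\lambda(\de)\le j-1$, a contradiction. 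The essential ingredient your sketch is missing is this use of the realization $A_j=v(T_j)$ to subtract off a representative of a level-$j$ element and push the problem down into $R_{j-1}$, where induction applies.
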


\begin{proof}
In the case $j=0$, $\phi$ is a power series in $K[[F]]$ not divisible by $F$ and $v(\phi)= \boldsymbol{0}$. It follows that $\lambda(v(\phi)) =0$. Thus, we can argue by induction and assume the thesis true for all the power series having degree in $G$ strictly smaller than $ j$.
    Since $\phi$ is not divisible by $F$, at least one of the series $a_k(F)$ has nonzero constant term. Thanks to the fact that the ring $R$ is local, we can use Weierstrass' Preparation Theorem to write $\phi = u(F,G) (G^h + \psi)$ with $h \leq j$, $\psi \in R_{j-1}$ and $v(u(F,G))=\boldsymbol{0}$. If $h < j$ we can conclude by inductive hypothesis.
    From this we can reduce to the case where 
     $\phi = G^j + \psi$ with $\psi \in R_{j-1}$. 
     Now set $\al= v(G^j+\psi) $. By way of contradiction suppose $\lambda(\al) > j$. Hence, by definition of $\lambda$ and by Lemma \ref{lemmanodominore} 
     we can find $\be \in A_j$ such that $\al > \be$. Since $R$ is local, by Remark \ref{remarklocalcase} we know that Theorem \ref{semilocalring2part} holds for $R$ and we get $A_j=v(T_j)$.
     Hence, we can find $\xi \in R_{j-1}$ such that $G^j + \xi \in T_j$ and $v(G^j + \xi)= \be$. Set $\de= v(\psi - \xi)$ and observe that $\al = v(G^j + \xi + \psi -\xi)$.
     By definition of $T_j$, $\be \neq \de$. For any component $i$ such that $\delta_i \neq \beta_i$, we get $ \min(\delta_i, \beta_i) = \alpha_i \geq \beta_i $ and thus $\alpha_i = \beta_i$. This implies that $\be < \de$. Now if $\be \ll \de $ we get the contradiction $\al = \be$. Therefore there exists a non-empty set of indices $U$ such that $\al \in \Delta^S_U(\be)$ and $\de \in \widetilde{\Delta}^S_{\widehat{U}}(\be)$. Now if $\al \in E$, clearly $\widetilde{\Delta}^S_{\widehat{U}}(\be) \subseteq A$ by property (G1), since $\de \wedge \al = \be \not \in E$. If $\al \in A$, then $\al \in A_l$ with $l > j $ and we can use Lemma \ref{lemmanodominore} to choose $\be$ in such a way that $\widetilde{\Delta}^S_{\widehat{U}}(\be) \subseteq A$. In any case $\de \in v(R_{j-1}) \cap A$ and therefore $F$ does not divide $\psi - \xi$. By inductive hypothesis $\lambda(\de) \leq j-1$ implying that $\de \in A_k$ with $k < j$. This is a contradiction since $\de > \be$.
\end{proof}

\begin{lemma}
    \label{levelofproduct-u*f}
  Adopt the notation of Setting \ref{setting} and let $G$ be defined as in the proof of Theorem \ref{semilocalring1part}. Let $G^j + \phi \in T_j$. Suppose that $v(G^j+ \phi) \in A_j$ and there exists $u= u(F,G) \in R$ of degree $k$ in $G$ such that $v(u)=(\boldsymbol{0},\te)$ and $ v(uG^j+u\phi) \in A_h$. Then $h \leq j+k$.
\end{lemma}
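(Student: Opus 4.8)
The plan is to argue by induction on $k$, the degree of $u$ in $G$. The base case $k=0$ means $u = u(F) \in K[[F]]$ with $v(u) = (\boldsymbol{0}, \boldsymbol{\theta})$; here $\boldsymbol{\theta} = v^{(2)}(u(F_2))$ is a multiple of $\boldsymbol{\omega}^{(2)}$, say $\boldsymbol{\theta} = m\boldsymbol{\omega}^{(2)}$, but since $u$ also has value $\boldsymbol{0}$ on the first component, $u$ is not divisible by $F$ only if $m = 0$; more precisely, writing $u(F) = c \cdot F_2^{?}\cdots$ we cannot actually have $v(u) = (\boldsymbol 0, m\boldsymbol\omega^{(2)})$ with $m \geq 1$ coming from a power of $F$ because then $F_1 \mid u$ on the first component. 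So in the base case $\boldsymbol\theta = \boldsymbol 0$, $v(uG^j + u\phi) = v(G^j+\phi) \in A_j$, and $h = j \leq j+0$. I would then assume the statement for all $u'$ of degree $< k$ in $G$.

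**The inductive step.**

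For the inductive step, write $u = a_k(F)G^k + u'$ with $u'$ of degree $< k$ in $G$, so that $uG^j + u\phi = a_k(F)G^{j+k} + (a_k(F)\phi G^j + u'G^j + u'\phi)$, where the second summand has degree at most $j+k-1$ in $G$ after collecting (using $\phi \in R_{j-1}$, so $\phi G^j \in R_{2j-1}$ — here one needs $2j - 1 \leq j + k - 1$, i.e. $j \leq k$; when $j > k$ this grouping is more delicate and one must instead reduce $\phi G^j$ modulo the relation $G^N = \sum h_i(F)G^i$). Cleaner is to observe directly that $uG^j + u\phi \in R_{j+k}$ when $j + k \leq N-1$, and invoke Lemma \ref{level-u} in the form that holds for $\mathcal{O}_2$ together with Theorem \ref{thmnonlocallevels}: decompose $v(uG^j + u\phi) = (\boldsymbol{\rho}^{(1)}, \boldsymbol{\rho}^{(2)})$ and bound $\lambda$ of each component. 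The first component, $v^{(1)}(\cdot)$, is the value of a degree-$\leq j$ (in $G_1$) element of $\mathcal{O}_1$ not divisible by $F_1$ — wait, one must check $F_1 \nmid$ the first component; this holds because $v(uG^j+u\phi) \in A_h \subseteq A$ forces no component divisible by $F$. So by Lemma \ref{level-u} applied to the local ring $\mathcal{O}_1$ (and its non-local generalization, which is exactly Theorem \ref{semilocalring2part} being built up), $\lambda(\boldsymbol{\rho}^{(1)}) \leq j$. For the second component, $v^{(2)}(\cdot)$ is the value in $\mathcal O_2$ of $(\beta + G_2)^{?}$ combinations; the key point is that $v^{(2)}(u$-part$)$ contributes the level corresponding to $\boldsymbol\theta$, which is at most $k$ by Lemma \ref{level-u} applied to $u$ itself (degree $k$ in $G$), so $\lambda(\boldsymbol{\rho}^{(2)}) \leq j + k - j$... the arithmetic here needs care: I would show $\lambda((\boldsymbol 0, \boldsymbol\rho^{(2)})) \leq \lambda((\boldsymbol 0, \boldsymbol\theta)) + j \leq k + j$ using that multiplication by $G^j + \phi$ (a unit times something of first-component-value adjusting appropriately) shifts the level by at most $j$.

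**The main obstacle.**

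The technical heart — and the step I expect to be hardest — is controlling the level of the product $v(u \cdot (G^j+\phi))$ in terms of the levels of the factors when the ambient ring $R$ is not local, so Weierstrass preparation is unavailable on $R$ itself. The strategy is to push everything down to the local factors $\mathcal O_1, \mathcal O_2$ where Lemma \ref{level-u} (and its already-established generalizations via the induction on $d$ that underlies Setting \ref{setting}) applies, and then reassemble via the additivity of the level function over products, Theorem \ref{thmnonlocallevels}: if $v(u) = (\boldsymbol 0, \boldsymbol\theta)$ and $v(G^j+\phi) = (\boldsymbol\alpha^{(1)}, \boldsymbol\alpha^{(2)}) \in A_j$, then $v(u(G^j+\phi)) = (\boldsymbol\alpha^{(1)}, \boldsymbol\alpha^{(2)} + \boldsymbol\theta)$, and $h = \lambda(\boldsymbol\alpha^{(1)}) + \lambda(\boldsymbol\alpha^{(2)}+\boldsymbol\theta) \leq \lambda(\boldsymbol\alpha^{(1)}) + \lambda(\boldsymbol\alpha^{(2)}) + \lambda((\boldsymbol 0, \boldsymbol\theta)) = j + \lambda((\boldsymbol 0,\boldsymbol\theta))$, and finally $\lambda((\boldsymbol 0, \boldsymbol\theta)) = \lambda(v(u)) \leq k$ by Lemma \ref{level-u} applied in $\mathcal O_2$ (legitimate since $u$ has first component of value $\boldsymbol 0$, hence is "like" a degree-$k$ element of the local ring $\mathcal O_2$ up to the harmless first factor). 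The subadditivity $\lambda(\boldsymbol x + \boldsymbol y) \leq \lambda(\boldsymbol x) + \lambda(\boldsymbol y)$ for elements of a single $S_i$ is the remaining lemma-level fact to be confirmed; it follows from the description of levels via Apéry sets. Combining, $h \leq j + k$, as desired.
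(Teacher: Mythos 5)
Your final paragraph arrives at essentially the paper's argument: split $v(uG^j+u\phi)$ into its $\mathcal{O}_1\times\mathcal{O}_2$ components, use Theorem \ref{thmnonlocallevels} to write $h$ as a sum of levels, observe that the first component is unchanged because $v^{(1)}(u)=\boldsymbol{0}$, bound $\lambda(\te)\le k$ by Lemma \ref{level-u} applied in the local factor $\mathcal{O}_2$, and conclude via subadditivity of $\lambda$ on $S_2$. (Your first two paragraphs --- the induction on $k$ and the attempted regrouping of $uG^j+u\phi$ as an element of $R_{j+k}$ --- are false starts that you yourself abandon, and they are not needed.)

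The genuine gap is the step you defer: the subadditivity $\lambda(\al+\be)\le\lambda(\al)+\lambda(\be)$ for $\al,\be\in S_2$, which you assert ``follows from the description of levels via Ap\'ery sets.'' This is the actual content of the lemma, and it is not a formal consequence of the definition of levels; nothing in the combinatorial part of the paper establishes it for an arbitrary good semigroup. The paper proves it by going back to the ring: using Lemma \ref{lambda} one replaces $\al,\be$ by larger elements $\al'\in A_i$, $\be'\in A_k$ (which only increases $\lambda(\al+\be)$), realizes these as values $v(G_2^i+\psi_{i-1})$ and $v(G_2^k+\psi_{k-1})$ in the local ring $\mathcal{O}_2$ (legitimate by Remark \ref{remarklocalcase}/Proposition \ref{3.8apery}), multiplies to obtain $\al'+\be'=v(G_2^{i+k}+\psi)$ with $\psi$ of degree at most $i+k-1$ in $G_2$, and then applies Lemma \ref{level-u} to this product --- a step that crucially uses Weierstrass preparation in the local ring $\mathcal{O}_2$. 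Without this (or an independent proof of subadditivity of the level function), your argument does not close.
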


\begin{proof}   
    Let $Y_1$ and $Y_2$ be the components of $uG^j+u\phi$ with respect to the direct product $\mathcal{O}_1 \times \mathcal{O}_2$. We recall that, from what written right after Remark \ref{remarklocalcase}, we can assume $\mathcal{O}_2$ to be local. By Theorem \ref{thmnonlocallevels}, $h= \lambda(v^{(1)}(Y_1))+ \lambda(v^{(2)}(Y_2)). $ Similarly write $j=j_1 + j_2$ where $j_1$ and $j_2$ are the values of the function $\lambda$ applied to the two components of $G^j+ \phi$.  Since the first component of $u$ has value zero we get $\lambda(v^{(1)}(Y_1)) = j_1$.  We need to prove that $\lambda(v^{(2)}(Y_2)) \leq k + j_2$. Applying Lemma \ref{level-u} to the second component of $u$ in the local ring $\mathcal{O}_2$ we get $\lambda(\te) \leq k$. 
    Thus, it is sufficient to prove that, if $\al, \be \in S_2$, $\lambda(\al) = i$ and $\lambda(\be) = k$, then $\lambda(\al + \be) \leq i+k$. Since the maximal value of $\lambda(\de)$ for $\de \in S_2$ is $N_2$, we can reduce to assume $i+k < N_2$.
    By Lemma \ref{lambda} we can replace $\al$, $\be$ by $\al' \in A_i$ and $\be' \in A_k$ such that $\al \leq \al'$ and $\be \leq \be'$ (in particular $\lambda(\al + \be) \leq \lambda(\al' + \be')$). Hence, let us assume that $\al \in A_i$ and $\be \in A_k$. By assumption on $R_2$, 
    we can find $G_2^i+ \psi_{i-1}$ and $G_2^k+ \psi_{k-1}$ having values respectively equal to $\al$ and $\be$. Then $\al + \be = v(G_2^{i+k} + \psi)$ for some $\psi$ having degree at most $i+k-1$ in $G_2$. To conclude we can now apply Lemma \ref{level-u} at the element $G_2^{i+k} + \psi \in R_2$.
\end{proof}


We are now ready to prove Theorem \ref{semilocalring2part}.

\begin{proof} (proof of Theorem \ref{semilocalring2part}).
    Starting from the fact that $A_0 = \lbrace (0,0) \rbrace = v(K)=  v(T_0)$, we prove that $A_j = v(T_j)$ for every $j=0, \ldots, N-1$ by induction. Fixed $j >0$, assume that $A_k = v(T_k)$ for all $k< j$. Thanks to Proposition \ref{propertyofG}, we know that for every $\al \in A_j$ there exists $\phi \in R_{j-1}$ such that $\al= v(G^j+\phi)$.
Thus, we only need to prove that, given $\phi \in R_{j-1}$, the following conditions are equivalent:
\begin{enumerate}
    \item[(i)] $v(G^j + \phi) \in A_j$.
    \item[(ii)] $G^j+ \phi \in T_j$.
\end{enumerate}
Let us prove (i) $\Longrightarrow$ (ii). 
Assume by way of contradiction $G^j+ \phi \not \in T_j$ and set $\al= v(G^j+\phi)$. Hence there exists $H \in R_{j-1}$ such that $v(H)=\al$.  Write $H=H(F,G)= \sum_{k=0}^{j-1} a_k(F) G^k$. Since $v(H) \in A$, $H$ is not divisible by $F$ and thus at least one of the power series $a_k(F)$ has nonzero constant term. We can apply Weierstrass' Preparation Theorem on the power series $\sum_{k=0}^{j-1} a_k(x) y^k$ in the local formal power series ring $K[[x,y]]$. This gives $H(x,y) = u(x,y)(\sum_{k=0}^{h-1} b_k(x) y^k + y^h)$ for $h \leq j-1$  and $u(x,y)$ with nonzero constant term. Mapping to the ring $R$, we obtain $H = u(F,G)(\sum_{k=0}^{h-1} b_k(F) G^k + G^h)$, where still $u:= u(F,G)$ has nonzero constant term but is not necessarily a unit. 
In particular, by definition of $F$ and $G$ we know that $v(u)=(0,a) $ for some $a \in S_2$. Set $G^h+ \psi= \sum_{k=0}^{h-1} b_k(F) G^k + G^h.$ Clearly, since $ (0,a) + v(G^h+ \psi) \in A$, then also $\be := v(G^h + \psi) \in A$.   Possibly iterating the same process finitely many times, replacing $G^j+ \phi $ by $G^h+\psi$, we can reduce to the case where $G^h + \psi \in T_h$ (eventually $R_0 = T_0$). By inductive hypothesis we get $\be \in A_h$. The division argument of Weierstrass' Preparation Theorem implies that $u=(u_1, u_2)$ is a polynomial in $G$ of degree $j-1-h$. By Lemma \ref{levelofproduct-u*f} we obtain $\al= v(H) = v(uG^h+u\psi) \in A_i$ with $i \leq (j-1-h)+h=j-1$. This contradicts the assumption of having $\al= v(G^j + \phi) \in A_j$. 


\medskip 
We prove now (ii) $\Longrightarrow$ (i).
Let $\al= v(G^j+\phi)$ and suppose first that $\al \not \in A$. Hence, we can write $\al= m \om + \de$ with $\de \in A_h$ and $m \geq 1$. If $h < j$, by inductive hypothesis, we can find $G^h+ \psi \in T_{h}$ such that $\de= v(G^h+ \psi)$. It follows that $\al = v(F^m(G^h+ \psi))$ and this contradicts the definition of $T_j$. If instead $h \geq j$, there exists $\be \in A_j$ such that $\be \leq \de $. Hence, $\al \gg \be$ and Proposition \ref{propertyofG} together with the implication (i) $\Longrightarrow$ (ii) allows us to find $G^j+ \psi \in T_{j}$ such that $\be= v(G^j+ \psi)$. \ec This yields a contradiction by Lemma \ref{lemmaTi}.

Suppose then $\al \in A_h$ for some $h$. By inductive hypothesis, since the sets $v(T_i)$ are disjoint by definition, we must have $h \geq j$. If $h > j$, by Lemma \ref{lemmanodominore} 
we can find $\be \in A_j$ such that $\be < \al$. As before we can find $G^j+ \psi \in T_{j}$ such that $\be= v(G^j+ \psi)$. If $\al \gg \be$, we conclude as previously using Lemma \ref{lemmaTi}. Otherwise we have $\al \in \Delta_F^S(\be)$ and we can use
Lemma \ref{lemmanodominore}
to assume also that $ \widetilde{\Delta}^S_{\widehat{F}}(\be) \subseteq A$. From this we get $\de := v(G^j+\phi - G^j - \psi) \in \widetilde{\Delta}^S_{\widehat{F}}(\be) \subseteq A.$ In particular $\de \in v(R_{j-1}) \cap A$. 
To conclude we prove that $v(R_{j-1}) \cap A \subseteq \bigcup_{l=0}^{j-1} A_l$.
This will show that $\de \in A_l$ with $l < j$ in contradiction with the fact that $\de \geq \be$.
 For $\de \in v(R_{j-1}) \cap A$, arguing as in the proof of implication (i) $\Longrightarrow$ (ii), we write $\de = v(\sum_{k=0}^{i-1} a_k(F) G^k)$ and use Weierstrass' Preparation Theorem to get $\sum_{k=0}^{j-1} a_k(F) G^k = u(F,G) (G^s + \xi)$ such that $s < j$, $v(u(F,G))=(0,a)$ and $G^s+\xi \in T_s$. The same argument used previously shows that $\be \in  A_l$ with $l \leq j-1$.
\end{proof}



\subsection{Ap\'ery's Theorem for semilocal blow-ups of plane algebroid curves}

Let $K$ be an infinite field and let $\mathcal V$ be a product of  local  rings of plane algebroid curves defined over $K$.
Then it is well-known that:
\begin{itemize}
	\item $\mathcal V \cong \mathcal V_1\times\cdots\times \mathcal V_c\subseteq \overline{\mathcal V}\cong K[[t_1]]\times\cdots\times K[[t_d]]$ with $(\mathcal V_i,\textbf{m}_i)$ local rings, 
 and $\overline{\mathcal V}$ a finite $\mathcal V$-module.
 \item $\mathcal V$ is reduced.
 \item $\mathcal V_i/\textbf{m}_i\cong K$.
\end{itemize}

We can always assume that $\mathcal V\cong \mathcal V_1\times \mathcal V_2$ with $\mathcal V_1$ not necessary local and $\mathcal V_2$ local. By Theorems \ref{semilocalring1part} and \ref{semilocalring2part}, we can write
$$
\mathcal V= K[[F]]+K[[F]]G + \cdots + K[[F]]G^{N-1}
$$
\noindent where $F$ is any element of $\mathcal V$ of value $\om=(\om^{(1)},\om^{(2)})$ with \ec  $\om^{(i)} \gg \boldsymbol 0$, and $G$ defined according to the proof of Proposition \ref{propertyofG}.

\begin{prop}
\label{blowup}
 The ring $\mathcal U= K[[F]]+K[[F]]H + \cdots + K[[F]]H^{N-1}$ with $H=G\cdot F$ is the local \ec   ring of a plane algebroid curve and its blowup, $\mathcal B(\mathcal U)$, is equal to $\mathcal V$.
 \end{prop}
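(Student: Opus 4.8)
The strategy is to verify directly that $\mathcal{U}$ satisfies the characterization of coordinate rings of plane algebroid curves supplied by Remark \ref{discussion2}: namely that $\mathcal{U}$ is generated as a $K[[H']]$-module (for a suitable parameter) by the successive powers of one element, up to degree equal to the multiplicity. We already have $\mathcal{U}= K[[F]]+K[[F]]H+\cdots+K[[F]]H^{N-1}$ with $H=GF$; the subtlety is that $F$ has value $\om$, not the minimal nonzero value of $v(\mathcal{U})$, so the presentation as written is not yet in the normalized form of Proposition \ref{discussion}. The plan is first to show that $\mathcal{U}$ is local, then to compute $v(\mathcal{U})$ well enough to identify its multiplicity and to see that $\mathcal{U}$ has a presentation of the form required by Remark \ref{discussion2}, and finally to compute the blow-up.

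First I would check that $\mathcal{U}$ is a subring of $\overline{\mathcal{V}}=K[[t_1]]\times\cdots\times K[[t_d]]$ containing $K$, with $\overline{\mathcal{U}}=\overline{\mathcal{V}}$ (since $F$ and $H=GF$ already generate, over $K$, a subring whose integral closure is all of $\overline{\mathcal{V}}$ — one uses that $\mathcal{V}$ has this integral closure and $\mathcal{V}$ is a finite $\mathcal{U}$-module because $\mathcal{V}=\mathcal{U}[G^{-1}\cdot\text{stuff}]$... more carefully, $\mathcal V$ is finite over $\mathcal U$ since $\mathcal V = \mathcal U + \mathcal U G + \cdots$ using $G^k = H^k F^{-k}$ — here one must be slightly careful and instead argue that both rings have the same normalization). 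Locality of $\mathcal{U}$: since $H=GF$ and $F$ has all components of value $\om^{(i)}\gg\boldsymbol 0$, every component of $v(H)$ is strictly positive; more generally every nonzero element of $\mathcal{U}$ of the form $\sum a_k(F)H^k$ with $a_0$ not a unit has all valuation components positive, and if $a_0(F)$ is a unit the element is a unit; hence the non-units form an ideal and $\mathcal{U}$ is local with $\mathcal{U}/\mathfrak{m}\cong K$. Then I would invoke Remark \ref{discussion2} with $F$ playing the role of the parameter and $H$ playing the role of $G$: the relation of integral dependence of $H$ over $K[[F]]$ in degree $N$ produces $\Phi(X,Y)=Y^N+\sum b_i(X)Y^i$ and, exactly as in that remark, $\mathcal{U}\cong K[[X,Y]]/(\Phi)$ is the coordinate ring of a plane algebroid curve (with $N$ branches, matching $\mathcal V$).

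The computation of the blow-up is the heart of the matter and, I expect, the main obstacle. By definition $\mathcal{B}(\mathcal{U})=\bigcup_n (\mathfrak{m}^n : \mathfrak{m}^n)$, and for the maximal ideal one has $\mathfrak{m}=(F,H,\ldots,H^{N-1})\mathcal U$ — but since $H=GF$, in fact $\mathfrak m = F\cdot(1,G,\ldots,G^{N-1})\mathcal U + (\text{higher})$, and the key point is that $F$ generates $\mathfrak{m}$ up to elements of the form $F\cdot(\text{unit combination})$, so $\mathfrak{m}=F\cdot\mathcal{V}$ as a set once we check $\mathcal V = F^{-1}\mathfrak m$. Concretely: $\mathfrak{m}\supseteq F\mathcal U$ and $\mathfrak m \supseteq H^k\mathcal U = F^k G^k \mathcal U$, so $F^{-1}\mathfrak m \supseteq \mathcal U + \sum_{k\ge1} F^{k-1}G^k\mathcal U$; one checks this equals $\sum_{k\ge 0}K[[F]]G^k = \mathcal V$ — the inclusion $F^{-1}\mathfrak m \subseteq \mathcal V$ needs that every element of $\mathfrak m$ is divisible by $F$ in $\mathcal V$, which holds because each generator $H^k = F(F^{k-1}G^k)$ and $F^{k-1}G^k\in\mathcal V$. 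Then $\mathfrak{m}^n = F^n\mathcal{V}^n$ and $(\mathfrak m^n:\mathfrak m^n) = (F^n\mathcal V^n : F^n\mathcal V^n) = (\mathcal V^n:\mathcal V^n)=\mathcal V$, using that $\mathcal V$ is already a ring (so $\mathcal V^n=\mathcal V$) and $F$ is a non-zerodivisor. Hence $\mathcal{B}(\mathcal{U})=\bigcup_n(\mathfrak m^n:\mathfrak m^n)=\mathcal{V}$. The delicate points to get right are: that $\mathfrak m$ is genuinely principal-up-to-$\mathcal V$, i.e.\ $\mathfrak m = F\mathcal V$ exactly (not just up to integral closure); that $F$ is a nonzerodivisor in all the relevant modules; and that $\mathcal V^n=\mathcal V$, which is immediate since $\mathcal V$ is a ring containing $1$. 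Once $\mathfrak m=F\mathcal V$ is established the rest is formal.
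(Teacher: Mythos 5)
Your first half (locality of $\mathcal U$ and the fact that it is a plane algebroid curve) follows essentially the paper's route: the paper presents $\mathcal U$ as $K[[x]][y]/(f)$ via $x\mapsto F$, $y\mapsto H$ and checks that the only maximal ideal is $(F,H)$, which is what your appeal to Remark \ref{discussion2} amounts to. Two small cautions there: Remark \ref{discussion2} is stated for a ring already known to be a plane curve, so you must rerun its argument (reduced, one-dimensional, kernel of a map from the $2$-dimensional UFD is a principal radical ideal) rather than cite it; and your claim that ``if $a_0(F)$ is a unit the element is a unit in $\mathcal U$'' needs the observation that $\mathcal U$ is a finite $K[[F]]$-module with $F$ and $H$ in its Jacobson radical, so that $1+(\text{topologically nilpotent})$ is invertible.

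The blow-up computation, however, contains a genuine error: the identity $\mathfrak m = F\mathcal V$ that you single out as the key step is false whenever $N\ge 3$. Since $1,G,\dots,G^{N-1}$ is a free $K[[F]]$-basis of $\mathcal V$, one has $\mathcal U=\bigoplus_{k}F^kK[[F]]G^k$ inside $\mathcal V=\bigoplus_k K[[F]]G^k$, so $FG^2\in F\mathcal V$ but $FG^2\notin\mathcal U\supseteq\mathfrak m$. Concretely, for $\mathcal V=K[[t]]$, $F=t^3$, $G=t$ (so $N=3$) one gets $\mathcal U=K[[t^3,t^4]]$, $\mathfrak m=(t^3,t^4)$ with value set $\{3,4,6,7,\dots\}$, while $F\mathcal V=t^3K[[t]]$ contains $t^5$; your purported verification fails exactly at the inclusion $\mathcal V\subseteq F^{-1}\mathfrak m$. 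For the same reason $(\mathfrak m:\mathfrak m)\ne\mathcal V$ in general ($G\cdot H=FG^2\notin\mathfrak m$), so the conclusion cannot be extracted from $n=1$. What is true, and what rescues your strategy, is that $\mathfrak m^n=F^n\mathcal V$ for $n\gg 0$ (equivalently $\mathfrak m^{n+1}=F\mathfrak m^n$ beyond the reduction number), after which $(\mathfrak m^n:\mathfrak m^n)=(\mathcal V:\mathcal V)=\mathcal V$ and the increasing union gives $\mathcal B(\mathcal U)=\mathcal V$; but that stabilization is precisely the content of the standard fact the paper invokes instead, namely that $F$, being of minimal value $\om$ in $\mathfrak m$, satisfies $\mathcal B(\mathcal U)=\mathcal U[\mathfrak m/F]=\mathcal U[G]=\mathcal V$. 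As written, your proof of $\mathcal B(\mathcal U)=\mathcal V$ does not go through.
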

\begin{proof} We first show that $\mathcal U$ is local with maximal ideal $(F,H)$. 
The element $G^N\in \mathcal V$ satisfies a relation $G^N=a_0(F)+a_1(F)G+\cdots+a_{N-1}(F)G^{N-1}$. Hence 
$$
(*)\ \ \ H^N=a_0(F)F^N+a_1(F)F^{N-1}H+\cdots+a_{N-1}(F)FH^{N-1}.
$$
Let $\varphi:K[[x]][y]\longrightarrow \mathcal U$ be the surjective homomorphism defined by $\varphi(x)=F$ and $\varphi(y)=H$. Since $\mathcal V$ is minimally generated as $K[[F]]$-module by $\{1,G,\dots,G^{N-1}\}$, then necessarily $N$ is the minimal integer such that the powers $1, H, H^2, \ldots, H^N$ are linearly dependent over $K[[F]]$. 
Hence 
$$
f=y^N-a_0(x)x^N-a_1(x)x^{N-1}y-\cdots-a_{N-1}(x)xy^{N-1}
$$ is an irreducible element of $K[[x]][y]$ and therefore
$\ker\varphi=(f)$, and
$\mathcal U\cong\frac{K[[x]][y]}{(f)}$. Let $\mathfrak{m}$ be a maximal ideal of $K[[x]][y]$ containing $(f)$. Then $\mathfrak{m}\cap K[[x]]=(x)$ and $\mathfrak m\supseteq(x)$. Hence $$\dfrac{K[[x]][y]}{\mathfrak{m}}\cong\frac{K[[x]][y]/(x)}{\mathfrak{m}/(x)}\cong\dfrac{K[y]}{\mathfrak{m}/(x)}$$ and $\frac{\mathfrak{m}}{(x)} \supseteq (\overline{f})$, where $\overline{f}$ denotes the image of $f$ in $\frac{K[[x]][y]}{(x)}$. But now it is easy to observe that $\overline{f}=\overline{y}^N$.
From this we get $\frac{\mathfrak{m}}{(x)}= (\overline{y})$,
hence $\mathfrak{m}=(x,y)$. By the isomorphism $\mathcal U\cong\frac{K[[x]][y]}{(f)}$, we conclude that the only maximal ideal of $\mathcal U$ is $(F,H)$.

Let us now prove that $\mathcal U$ and $\mathcal V$ have the same field of fractions, that is $Q(\mathcal U)=Q(\mathcal V)$. One inclusion is trivial as $\mathcal U\subseteq \mathcal V$. Given $g \in \mathcal{V}$ we observe that $F^ng \in \mathcal{U}$. Thus, 
 given $g/h\in Q(\mathcal V)$, we get $g/h=(F^Ng)/(F^Nh)\in Q(\mathcal U)$.

We note then also that $\overline{\mathcal U}=\overline{\mathcal V}$. Indeed, we have the following chains of inclusions:

$$
K[[F]]\subseteq \mathcal U\subseteq \mathcal V\subseteq K[[t_1]]\times\cdots\times K[[t_d]]
$$
\noindent where the second and the third inclusions are integral as $\mathcal V$ is a finite $K[[F]]$-module and $K[[t_1]]\times\cdots\times K[[t_d]]\cong\overline{\mathcal V}$. Hence $\overline{\mathcal U}=\overline{\mathcal V}$ and $\overline{\mathcal U}$ is a finite $\mathcal U$-module.

Finally, since $F$ is an element of minimal value in $(F,H)$, we have $\mathcal B(\mathcal U)=\mathcal U\left[\frac{H}{F}\right]=\mathcal U[G]=\mathcal V$.
\end{proof}


\begin{oss}\label{U}
Let $\mathcal{O}$ be the ring of a plane algebroid curve. Then 
 by Proposition \ref{discussion}, $\mathcal{O}=k[[x]]+k[[x]]y+k[[x]]y^2+\cdots+k[[x]]y^{N-1}$, where $v(x)=(e_1,\dots,e_d)=\min(v(\mathcal{O}\setminus\{\boldsymbol{0}\})$ 
and $N=e_1+\cdots+e_d=e$ is the multiplicity of $\mathcal{O}$. Let $\mathcal B(\mathcal{O})$ be the blow-up ring of $\mathcal{O}$ and suppose $\mathcal B(\mathcal{O})$ to be semilocal. By Theorems \ref{semilocalring1part} and \ref{semilocalring2part}, choosing $\om= (e_1,\dots,e_d)$, we can write 
$$
\mathcal B(\mathcal{O})= K[[F]]+K[[F]]G + \cdots + K[[F]]G^{N-1}
$$ for opportune choices of $F$ and $G$.
Since $F$ can be any element of $\mathcal B(\mathcal{O})$ of value $\om$, 
we can choose $F= x$ and get
$$
\mathcal B(\mathcal{O})= K[[x]]+K[[x]]G + \cdots + K[[x]]G^{N-1}.
$$
Finally, by Proposition \ref{blowup} we have that the local ring
$$
\mathcal U= K[[x]]+K[[x]]xG + \cdots + K[[x]]x^{N-1}G^{N-1}
$$
is the ring an algebroid curve and $\mathcal B(\mathcal U)=\mathcal B(\mathcal{O})$.
\end{oss}

\begin{prop}\label{U=O}
 The rings $\mathcal{O}$ and $\mathcal U$ considered in Remark \ref{U} are equal.
 \end{prop}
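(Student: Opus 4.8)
The plan is to prove $\mathcal{U}=\mathcal{O}$ by first establishing the inclusion $\mathcal{U}\subseteq\mathcal{O}$ and then matching colengths inside the common overring $\mathcal{B}:=\mathcal{B}(\mathcal{O})=\mathcal{B}(\mathcal{U})$ of Remark \ref{U}. Both $\mathcal{O}$ and $\mathcal{U}$ are local rings of plane algebroid curves (the latter by Proposition \ref{blowup}), both contain $K[[x]]$, and both have normalization $\overline{\mathcal{O}}$. They also have the same multiplicity $N$: every non-unit of $\mathcal{U}=K[[x]]+K[[x]]xG+\cdots+K[[x]]x^{N-1}G^{N-1}$ has value $\ge v(x)=\e$ (since $v(xG)=\e+v(G)\ge\e$ and $G\in\overline{\mathcal O}$), so the minimal element of $v(\mathcal{U})$ with all positive coordinates is again $\e=(e_1,\dots,e_d)$, of coordinate sum $N$. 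Recalling the classical formula $\ell\big(\mathcal{B}(R)/R\big)=\binom{e(R)}{2}$ for a local ring $R$ of a plane algebroid curve, we get $\ell(\mathcal{B}/\mathcal{O})=\binom{N}{2}=\ell(\mathcal{B}/\mathcal{U})$. Hence, once $\mathcal{U}\subseteq\mathcal{O}$ is proved, the chain $\mathcal{U}\subseteq\mathcal{O}\subseteq\mathcal{B}$ forces $\ell(\mathcal{O}/\mathcal{U})=0$, i.e. $\mathcal{O}=\mathcal{U}$.

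Since $\mathcal{O}$ is a ring and $\mathcal{U}=K[[x]][xG]$, the inclusion $\mathcal{U}\subseteq\mathcal{O}$ will follow once $xG\in\mathcal{O}$ is shown, and for this I first record a convenient description of $\mathcal{O}$ inside $\mathcal{B}$. By Remark \ref{discussion2} we may take $y$ with $v(y)\ge\e=v(x)$, so $x$ has minimal value in $\mathfrak{m}=(x,y)$, whence $\mathfrak{m}\mathcal{B}=x\mathcal{B}$ and $\mathcal{B}=\mathcal{O}[\mathfrak{m}/x]=\mathcal{O}[y/x]=K[[x]][y/x]$. Writing $\mathcal{O}\cong K[[X,Y]]/(\Phi)$ with $\Phi=Y^{N}+c_1(X)Y^{N-1}+\cdots+c_{N}(X)$ as in Remark \ref{discussion2}, the curve has multiplicity $N$, i.e. $\mathrm{ord}(\Phi)=N$; since $\mathrm{ord}(\Phi)=\min\{N,\ \min_j(\mathrm{ord}(c_j)+N-j)\}$ this forces $\mathrm{ord}(c_j)\ge j$ for every $j$. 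Dividing the relation $y^{N}=-\sum_{j}c_j(x)y^{N-j}$ by $x^{N}$ then produces a monic degree-$N$ relation for $y/x$ over $K[[x]]$, so $\mathcal{B}=\sum_{i=0}^{N-1}K[[x]](y/x)^{i}$; as $\overline{\mathcal{O}}$ is $K[[x]]$-free of rank $N$ (because $v(x)=(e_1,\dots,e_d)$) and $\mathcal{B}$ has the same rank, this sum is direct: $\mathcal{B}=\bigoplus_{i=0}^{N-1}K[[x]](y/x)^{i}$. Consequently $\mathcal{O}=\bigoplus_{i=0}^{N-1}K[[x]]y^{i}=\bigoplus_{i=0}^{N-1}x^{i}K[[x]](y/x)^{i}$, so an element $\sum_i a_i(x)(y/x)^{i}$ of $\mathcal{B}$ lies in $\mathcal{O}$ precisely when $x^{i}\mid a_i(x)$ for all $i$. (In particular this already displays $\mathcal{O}$ as the ring one obtains from the blow-down of Proposition \ref{blowup} applied to the generator $y/x$ of $\mathcal{B}$.)

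It then remains to prove that, writing $G=\sum_{i=0}^{N-1}b_i(x)(y/x)^{i}$ in the above basis, one has $\mathrm{ord}(b_i)\ge i-1$ for all $i$, equivalently $x\,b_i(x)\in x^{i}K[[x]]$, so that $xG=\sum_i xb_i(x)(y/x)^{i}\in\mathcal{O}$. Here one uses the way $G$ is produced in Proposition \ref{propertyofG}: for every $j$ and every $\al$ in the level $A'_j$ of $\Ap(v(\mathcal{B}),\e)$ there is $\phi\in K[[x]]+\cdots+K[[x]]G^{j-1}$ with $v(G^{j}+\phi)=\al$, while by Theorem \ref{4.1apery} (in the semilocal case, Theorem \ref{final}) the levels of $\Ap(v(\mathcal{O}),\e)$ are $A_j=A'_j+j\e$, realised by the sets $T_j$ built from $y$ (Proposition \ref{3.8apery} and Theorem \ref{semilocalring2part}). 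Multiplying by $x^{j}$ carries the element $G^{j}+\phi$ of value $\al\in A'_j$ to an element $x^{j}G^{j}+x^{j}\phi$ of value $x^{j}\al\in A_j$ which, by the direct-sum description above and induction on $j$, lies in $\mathcal{O}$; comparing, level by level, with the elements $y^{j}+(\text{lower degree})$ realising $A_j$ then forces the required divisibilities on the $b_i$. Running this induction on $j=1,\dots,N-1$ yields $xG\in\mathcal{O}$, hence $\mathcal{U}\subseteq\mathcal{O}$, and by the colength count of the first paragraph $\mathcal{O}=\mathcal{U}$.

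The step I expect to be the real obstacle is the last one: extracting, from the Apéry-level behaviour of the element $G$ constructed in Proposition \ref{propertyofG}, the divisibility conditions $x^{i-1}\mid b_i$ on its coordinates in the basis $\{(y/x)^i\}$ of $\mathcal{B}$ — in other words, seeing that the specific generator $G$ blows down to the same ring as the ``obvious'' generator $y/x$. Everything else — the reduction to $xG\in\mathcal{O}$, the length bookkeeping, and the decomposition $\mathcal{B}=\bigoplus_i K[[x]](y/x)^i$ — is routine once $\mathrm{ord}(c_j)\ge j$ has been observed.
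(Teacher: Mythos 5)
Your reduction of the problem to the single inclusion $xG\in\mathcal{O}$ is sound, and the surrounding scaffolding is essentially correct: the colength identity $\ell(\mathcal{B}/\mathcal{O})=\binom{N}{2}=\ell(\mathcal{B}/\mathcal{U})$ (both rings have multiplicity $N$ and the same blow-up by Proposition \ref{blowup}), and the description $\mathcal{B}=\bigoplus_{i=0}^{N-1}K[[x]](y/x)^{i}$, $\mathcal{O}=\bigoplus_{i}x^{i}K[[x]](y/x)^{i}$ via $\mathrm{ord}(c_j)\ge j$, are all fine. But the step you yourself flag as ``the real obstacle'' --- extracting the divisibilities $\mathrm{ord}(b_i)\ge i-1$ for $G=\sum_i b_i(x)(y/x)^i$ --- is not actually proved, and the sketch you give for it is circular: you invoke Theorem \ref{final} to identify the levels $A_j=A'_j+j\e$, but in the paper Theorem \ref{final} is \emph{deduced from} Proposition \ref{U=O}, so it cannot be used here. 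Moreover the assertion that $x^jG^j+x^j\phi$ ``lies in $\mathcal{O}$ by induction on $j$'' is precisely the inclusion $\mathcal{U}\subseteq\mathcal{O}$ you are trying to establish. So as it stands the proof has a genuine gap at its only nontrivial point.

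The paper closes this gap by a different and much shorter argument, which you should compare with your plan. Since $v(y/x)=v(y)-\e$ has a zero component on the set $I_1$ of branches where $v(y)_i=e_i$, it lies in $\Ap(v(\mathcal{B}(\mathcal{O})),\om)$; writing $\mathcal{B}(\mathcal{O})=C_1\times C_2$ with $C_2$ local and generated by the powers of the image of $y/x$, Proposition \ref{3.8apery} puts the $C_2$-component of $v(y/x)$ in the first level, and Theorem \ref{thmnonlocallevels} then gives $\lambda(v(y/x))=0+1=1$. By Theorem \ref{semilocalring2part} this forces $y/x=G+\phi(x)$ with $\phi$ of degree $0$ in $G$, i.e.\ $\phi\in K[[x]]$. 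Hence $y=xG+x\phi(x)$, which simultaneously gives $xG\in\mathcal{O}$ and $y\in\mathcal{U}$, so $\mathcal{O}=\mathcal{U}$ with no need for the colength count. In your notation this says $b_1=1$, $b_0=-\phi(x)$ and $b_i=0$ for $i\ge 2$, which is far stronger than the divisibilities you were aiming for; the point you are missing is that the level computation for $v(y/x)$ pins down the degree of $y/x$ as a polynomial in $G$ to be exactly $1$, rather than leaving you to control all $N$ coordinates $b_i$ separately.
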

\begin{proof}
We need to prove that $y\in \mathcal U$ and $xG\in\mathcal{O}$. We know that $y/x \in\mathcal{O}[y/x]=\mathcal B(\mathcal{O})=\mathcal B(\mathcal U)$ and $v(y/x)$ is in the Ap\'ery set of $v(\mathcal B(\mathcal{O}))$ with respect to $\om$. Hence, by Theorem \ref{semilocalring2part}, $y/x =G^j+\phi(x,G)$ for some $\phi(x,G)$ of degree at most $j-1$ in $G$. We claim that $j=1$, that is $v(y/x)$ is in the first level of $\Ap(v(\mathcal B(\mathcal{O})), \om)$. 

Indeed, as recalled before Proposition \ref{blowup}, $\mathcal B(\mathcal{O})\subseteq K[[t_1]]\times\cdots\times K[[t_d]]$ and we can write $\mathcal B(\mathcal{O})=C_1\times C_2$ where $C_1$ and $C_2$ are the natural projection of $\mathcal B(\mathcal{O})$ over the sets of indexes $I_1=\left\{i\in\{1,\dots,d\}\ |\ v(y)_i=e_i\right\}$ and $I_2=\left\{i\in\{1,\dots,d\}\ |\ v(y)_i>e_i\right\}$, respectively. Both sets $I_1$ and $I_2$ are nonempty since we assumed $\mathcal B(\mathcal{O})$ to be not local.

Thus observe that $v(y/x)= (\boldsymbol{0}, \be) \in v(C_1) \times v(C_2)$. Observe that $C_2$ is local and generated as module by the powers of the image of $y/x$. 
By  Proposition \ref{3.8apery},  this implies that $\be$ is in the first level of the Ap\'ery set of $v(C_2)$. 
Theorem \ref{thmnonlocallevels} yields $j= \lambda(v(y/x))= 1$ and therefore $y/x=G+\phi(x,G)$. 
 It follows that $y=xG+x\phi(x,G)$ and $\mathcal{O}=\mathcal{U}$ as $\phi(x,G)\in K[[F]] \subseteq \mathcal{O}\cap \mathcal{U}$.
\end{proof}

\begin{theorem}\label{final}
Let $\mathcal{O}$ be the ring of a plane algebroid curve and suppose its blow-up ring $\mathcal B(\mathcal{O})$ to be not local. Let $\om$ be the minimal nonzero element of $\mathcal{O}$.
Let $A_i$ and $A'_i$ denote the $i$-th levels of the Ap\'ery sets with respect to $\om$ of $v(\mathcal{O})$ and of $v(\mathcal B(\mathcal{O}))$, respectively. Then $A_i=A'_i+i\om$.
 \end{theorem}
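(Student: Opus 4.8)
The plan is to reduce, using the blow-down process just established, to a situation in which $\mathcal{O}$ and $\mathcal{B}(\mathcal{O})$ are described by a single pair of elements, and then to compare the level descriptions $A_i = v(T_i)$ on the two sides by means of the map given by multiplication by $x^i$. Concretely, since $\mathcal{B}(\mathcal{O})$ is semilocal but not local, Remark \ref{U} and Proposition \ref{U=O} provide $G \in \mathcal{B}(\mathcal{O})$ such that, writing $\om = v(x)$ for the minimal nonzero element of $\mathcal{O}$ and $N$ for the multiplicity,
\[ \mathcal{B}(\mathcal{O}) = K[[x]] + K[[x]]G + \cdots + K[[x]]G^{N-1}, \qquad \mathcal{O} = K[[x]] + K[[x]](xG) + \cdots + K[[x]](xG)^{N-1}, \]
the second identity being exactly $\mathcal{O} = \mathcal{U}$. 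Then $\mathcal{O}$ is presented as in Proposition \ref{discussion} with the role of $y$ played by $xG$, so Proposition \ref{3.8apery} gives $A_i = v(T_i^{\mathcal{O}})$ for the sets $T_i^{\mathcal{O}} = \{(xG)^i + \phi : \phi \in R_{i-1}^{\mathcal{O}},\ v((xG)^i + \phi) \notin v(R_{i-1}^{\mathcal{O}})\}$ built from $R_i^{\mathcal{O}} = \sum_{k \le i} K[[x]](xG)^k$, while Theorems \ref{semilocalring1part} and \ref{semilocalring2part} applied to $\mathcal{B}(\mathcal{O})$ with $F = x$ give $A'_i = v(T_i^{\mathcal{B}(\mathcal{O})})$, with $T_i^{\mathcal{B}(\mathcal{O})}$ built analogously from $R_i^{\mathcal{B}(\mathcal{O})} = \sum_{k \le i} K[[x]]G^k$. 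So everything comes down to proving $v(T_i^{\mathcal{O}}) = i\om + v(T_i^{\mathcal{B}(\mathcal{O})})$.

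The bijection I would use is $z \mapsto x^i z$: it carries $G^i + \psi$ to $(xG)^i + x^i\psi$, and expanding $\psi = \sum_{k<i} b_k(x)G^k$ shows $x^i \psi = \sum_{k<i} (b_k(x) x^{i-k})(xG)^k \in R_{i-1}^{\mathcal{O}}$ since $i-k \ge 1$, with $v(x^i z) = i\om + v(z)$ because $v(x) = \om$. Conversely, dividing by $x^i$ should send $T_i^{\mathcal{O}}$ back. The substance of the argument is to check that this correspondence is compatible with the requirement that the value not lie in $v(R_{i-1})$ on both sides; the key geometric input is the structure of $v(G)$ obtained in the proof of Proposition \ref{U=O}: with $I_1 = \{i : v(y)_i = e_i\}$ and $I_2 = \{i : v(y)_i > e_i\}$, both nonempty because $\mathcal{B}(\mathcal{O})$ is not local, one has $v(G) = (\boldsymbol{0}_{I_1}, \be)$ with $\be \gg \boldsymbol{0}$. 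Reading off the $I_1$-coordinates, on which $G$ has value $0$, lets one show that for $\Phi \in R_{i-1}^{\mathcal{O}}$ with $v(\Phi) \ge i\om$ componentwise the quotient $\Phi/x^i$ lies again in $R_{i-1}^{\mathcal{B}(\mathcal{O})}$, and hence that $v(\Phi) \in v(R_{i-1}^{\mathcal{O}})$ exactly when $v(\Phi/x^i) \in v(R_{i-1}^{\mathcal{B}(\mathcal{O})})$.

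The main obstacle, as I see it, is the step of controlling the $G$-degree after division by $x^i$: the $I_1$-coordinate computation only pins down the leading homogeneous-in-$G$ term of $\Phi$ unless there is cancellation among the graded pieces, and this cancellation must be handled exactly as in Lemmas \ref{level-u} and \ref{levelofproduct-u*f}, by applying Weierstrass' Preparation Theorem to write $\Phi = u(G^h + \xi)$ with $h < i$ and $v(u) = (\boldsymbol{0}, a)$ and using the comparison between the level function $\lambda$ and the $G$-degree. Granting this, the bijection $z \mapsto x^i z$ identifies $T_i^{\mathcal{B}(\mathcal{O})}$ with $T_i^{\mathcal{O}}$ up to the value shift $i\om$, whence $A_i = v(T_i^{\mathcal{O}}) = i\om + v(T_i^{\mathcal{B}(\mathcal{O})}) = A'_i + i\om$; in particular this re-establishes $A_i \subseteq i\om + \N^d$ in the non-local case.
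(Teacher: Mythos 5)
Your proposal is correct and follows essentially the same route as the paper: both use Remark \ref{U} and Proposition \ref{U=O} to present $\mathcal{O}=\mathcal{U}$ as the $K[[x]]$-module generated by powers of $xG$, invoke Proposition \ref{3.8apery} and Theorems \ref{semilocalring1part}--\ref{semilocalring2part} to describe $A_i$ and $A'_i$ as $v(T_i)$ on the two sides, and conclude via the multiplication-by-$x^i$ correspondence, which is exactly the final step the paper delegates to the proof of \cite[Theorem 4.1]{apery}. The technical point you flag about preserving the condition $v(\cdot)\notin v(R_{i-1})$ under division by $x^i$ is precisely what that cited argument handles, so nothing is missing.
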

\begin{proof}
We can describe $\mathcal{O}$ and $\mathcal{U}$ according to the notation used in Remark \ref{U}. Furthermore, denote by $\mathcal{O}_i$ the $K[[x]]$-submodule of $\mathcal{O}$ generated by $1, y, y^2, \ldots, y^i$ and, similarly, denote by $\mathcal{U}_i$ the $K[[x]]$-submodule of $\mathcal{U}$ generated by $1, xG, x^2G^2 \ldots, x^iG^i$. For the ring $\mathcal B(\mathcal{O})$ we adopt the notation of Theorems \ref{semilocalring1part} and \ref{semilocalring2part} setting $R= \mathcal B(\mathcal{O})$ and defining the subsets $R_i$ as for those theorems.

Thus, by \cite[Proposition 3.8]{apery}  and Proposition \ref{U=O} we have 
$$
A_i=\left\lbrace v(y^i+\phi_{i-1})\ |\  \phi_{i-1}\in\mathcal{O}_{i-1} \mbox{ and } v(y^i+\phi_{i-1})\notin v(\mathcal{O}_{i-1})\right\rbrace=
$$
$$
=\left\lbrace v(x^iG^{i} + \psi_{i-1}) \ | \ \psi_{i-1} \in \mathcal{U}_{i-1} \mbox{ and } v(x^iG^i + \psi_{i-1}) \notin  v(\mathcal{U}_{i-1}) \ec  \right\rbrace.
$$
By Theorem \ref{semilocalring2part}, we have 
$$
A'_i= \left\lbrace v(G^{i} + \varphi_{i-1}) \ | \ \varphi_{i-1} \in R_{i-1} \mbox{ and } v(G^i + \varphi_{i-1}) \notin  v(R_{i-1}) \ec  \right\rbrace.
$$
Hence in order to prove the theorem we can use exactly the same proof of \cite[Theorem 4.1]{apery}.
\end{proof}

\begin{ex}
\label{ex1}
Let us consider the ring 

$$\mathcal{O}=\frac{K[[X,Y]]}{(X^5-Y^2)\cap (X^7+X^5+3X^4Y-Y^3) \cap (X^5-X^2+2XY-Y^2)}$$
of a plane algebroid curve, which is parametrized by: $$\mathcal{O}=K[[(t^2,u^3,v^2),(t^5,u^5+u^7,v^2+v^5]]$$
If we compute the blow-up, we obtain:
$$\mathcal{O'}:=\mathcal{B(O)}=K[[(t^2,u^3,v^2),(t^5,u^2+u^4,1+v^3,]]=K[[(t^2,u^3),(t^3,u^2+u^4)]]\times K[[(v^2,v^3)]].$$
If we denote by $\mathcal{O'}_1:=K[[(t^2,u^3),(t^3,u^2+u^4)]]$ and $\mathcal{O'}_2:=K[[(v^2,v^3)]]$.
We have that the Apéry set of the semigroup $v(\mathcal{O'}_1))$ with respect to the element 2 is the set $\{0,3\}$ and $\lambda(0)=0,\lambda(2)=1,\lambda(3)=1,\lambda(4)=2$.
The Apéry set of $\mathcal{O'}_2$ with respect to the element (2,3) is depicted in figure \ref{fig:apery}.
\begin{figure}[H]
    \centering
    \includegraphics{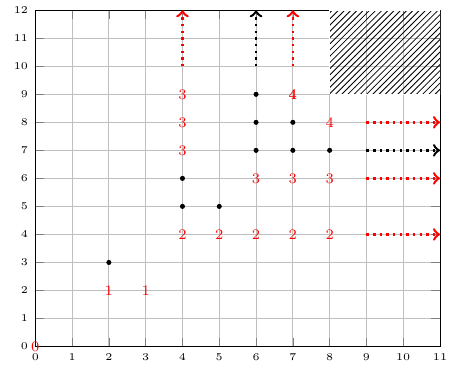}
    \caption{\footnotesize{The figure represents the Apéry Set of the semigroup $v(\mathcal{O'}_2)$}}
    \label{fig:apery}
\end{figure}
Using the method described in \cite[Theorem 4.5]{GMM2} we can determine the levels of the Apéry Set $A'$ of the ring $\mathcal{O'}$ with respect to the element $\om=(2,3,2)$. In this case $\ga=(5,5,1)$ and we have that:
\small{
\begin{eqnarray*}
A_0'&=&\{(0,0,0)\}\\    
A_1'&=&\{(0,0,2),(0,0,3),(2,2,0),(3,2,0)\}\\
A_2'&=&\{(0,0,\infty),(2,2,2),(2,2,3),(3,2,2),(3,2,3),(4,4,0),\\
&&(2,3,0),(5,4,0),(6,4,0),(7,4,0),(\infty,4,0)\}\\
A_3'&=&\{(2,2,\infty),(3,2,\infty),(2,3,3),(4,4,2),(4,4,3),(5,4,2),(5,4,3),(6,4,2),(6,4,3),\\
&&(7,4,2),(7,4,3),(\infty,4,2),(\infty,4,3),(4,5,0),(5,5,0),(4,6,0),(4,7,0),(4,8,0),\\
&&(4,\infty,0),(6,6,0),(7,6,0),(\infty,6,0)\}\\
A_4'&=&\{(4,4,\infty),(5,4,\infty),(6,4,\infty),(7,4,\infty),(\infty,4,\infty),(4,5,3),(5,5,3),(4,6,3),(4,7,2),\\
&&(4,7,3),(4,8,2),(4,8,3),(4,\infty,2),(4,\infty,3)(6,6,2),(6,6,3),(7,6,2),(7,6,3),(\infty,6,2),\\
&&(\infty,6,3),(\infty,8,0),(7,\infty,0)\}\\
A_5'&=&\{(4,7,\infty),(4,8,\infty),(4,\infty,\infty),(6,6,\infty),(7,6,\infty),(\infty,6,\infty),(6,7,3),(7,7,3),(\infty,7,3),\\
&&(6,8,3),(7,8,3),(6,\infty,3),(\infty,8,2),(\infty,8,3),(7,\infty,2),(7,\infty,3),(\infty,\infty,0)\}\\
A_6'&=&\{(\infty,8,\infty),(7,\infty,\infty),(\infty,\infty,3)\},
\end{eqnarray*}
}\normalsize
\noindent where, by convention, we say that an element of the form $\al=(\alpha_1,\alpha_2,\alpha_3)$ with $\alpha_i=\infty$ belongs to the set $A'_k$ if all the elements $\be$ with $\beta_i>\gamma_i+\omega_i$ and $\be_j=\alpha_j$, $j\neq i$ belong to the set $A'_k$.\\
Hence, using Theorem \ref{final}, we can compute the levels of the Ap\'ery set of the semigroup $v(\mathcal{O})$ with respect to the multiplicity (2,3,2) using the formula $A_i=A_i'+i(2,3,2)$, for $i\in\{0,\ldots,6\}$.

\end{ex}

\section{Multiplicity trees of plane curve singularities}\label{sec:Multiplicities}
 
Let $R \cong \OO_{1} \times \cdots \times \OO_c $ be a direct product of local
rings
$\OO_{j}$ ($1\le j \le c$) each one associated to a reduced plane algebroid curve
defined over an algebraically
closed field $K$.
Let us denote $\CC_1, \ldots, \CC_d$ (resp.
$\nu_1,\ldots, \nu_d$)
the branches of $R$ (resp. its valuations). 
For $j=1, \ldots, c$, let $S_j \subseteq \N^{d_j}$ denote the value semigroup
of $\OO_{j}$. For every $j$, $S_j$ is a local good semigroup (a numerical
semigroup if $d_j=1$).
The value semigroup of $R$ is $S= S_1 \times \cdots \times S_c \subseteq \N^{d}$
where $d = d_1 + \cdots + d_c$.
In this section will be useful to identify each semigroup $S_j\subset \N^{d_j}$ as a
the subsemigroup of $S$:
$S_j = \{0\}\times \cdots \times \{0\}\times S_j \times \{0\} \times \cdots  \times
\{0\}$.

\medskip

The {\it fine multiplicity \/} of $\OO_j$ is the minimal value
$\nu(x)\in S$ for $x\in \OO_j$ not unit. Notice that the identification of $S_j$
inside
$S$ implies that $\nu_i(x)\neq 0$ if and only if $\nu_i$ is a valuation of $\OO_j$.

The local rings $\OO_1, \ldots, \OO_c$ will be called the rings (or the points
following a more classical terminology) in the
$0$-neighbourhood of $R$.
Let $R^{(1)}\cong \OO_1^{(1)}\times \cdots \times \OO_c^{(1)}$ denote the ring in the
first neighbourhood of $R$, i.e. the ring produced after the blowing-up of $R$.
Notice that each ring $\OO_i^{(1)}$ is the product of a finite number of local
rings: the local rings (points) of the first-neighbourhood of $\OO_i$. All the local
rings of the
ring $R^{(1)}$ constitute the rings (or points) of the first-neighbourhood of $R$.

Recursively, for $j \ge 2$,
let $R^{(j)}\cong \OO_1^{(j)}\times \cdots \times \OO_c^{(j)}$ denote the ring in the
$j$-th neighbourhood of
$R$, i.e. the ring produced after $j$ blowing-ups of $R$ or equivalently the ring in
the first neighbourhood of $R^{(j-1)}$. As in the case $j=1$, the ring $R^{(j)}$ is
the product of finite number of local rings: the local rings (or the points) of the
$j$-neighbourhood of $R$. Notice that for $j$ big enough
$R^{(j)} \simeq \overline{R} \simeq K[[t_1]]\times \cdots \times K[[t_d]]$.

The whole set of local rings of the successive neighbourhoods is encoded as the set
of vertices ${\mathcal N}$ of a (infinite) graph ${\mathcal T}$ in a such a way that
two vertices corresponding to local rings $\OO$ and $\OO'$ are connected by an edge if
one of them is in the first neighbourhood of the other.
Thus, ${\mathcal T}$ is the disjoint union of $c$ graphs
$\mathcal T_1, \ldots, \mathcal T_c$, being $\mathcal T_i$ the graph
corresponding to the local ring $\OO_i$. Each ${\mathcal T_i}$ is a tree with root
in the vertex corresponding to $\OO_i$ and such that the $j$-th level of
${\mathcal T_i}$ consists of the vertices corresponding to the rings of the
$j$-neighbourhood of $\OO_i$.

The multiplicity graph of $R$ is the graph ${\mathcal T}$ with the additional
information of the fine multiplicity of each local ring attached as a weight of the
corresponding vertex. Although it is a tree only if $c=1$, we will refer to it as the
{\it multiplicity tree} of $R$ and we denote it by $\mathcal T (R)$ or simply
$\mathcal T$.

\medskip

The purpose of this section is the characterization of the admissible
multiplicity
trees of a plane curve singularity (not necessarily local) over an algebraically
closed field of arbitrary characteristic and to prove the
equivalence between the multiplicity tree, the semigroup of values and the suitable
sequences of multiplicities of each branch, together with the splitting numbers (equivalent
to the intersection multiplicities) between pair of branches.

The case $d\le 2$ and characteristic zero has been treated in \cite{apery}, however
the extension to any algebraically closed field, as well as
the sake of completeness make convenient to include it also here.
All the proofs of the results for $d=2$ (and characteristic zero) can be found in the
above reference.

As is well known, in positive characteristic the Newton-Puiseux theorem is not
valid. Therefore in this section we will systematically use the Hamburger-Noether
expansions which are valid in arbitrary characteristic. We have chosen to include
them in an almost self-contained way from Campillo's book \cite[Chapter II]{campillo}, where the
reader can find the precise proofs of the results we will use here. In some cases we
use some of the classical terminology of the treatment of singularities of complex
plane curves since from the point of view of the resolution and the combinatorial
invariants of the curves there is no substantial difference.

\begin{ex}
Let $$\mathcal{O}=\frac{K[[X,Y]]}{P_1\cap P_2 \cap P_3}$$
be a plane algebroid curve, parametrized by:
$$\mathcal{O}=K[[(t^2,u^3,v^2),(t^7,u^8+u^{10},v^4+v^7]],$$
with semigroup of values $S:=v(R)$ and multiplicity $\omega=(2,3,2)$.
We can compute the blow-up and multiplicity sequence:
$$\mathcal{O'}:=\mathcal{B(O)}=K[[(t^2,u^3,v^2),(t^5,u^5+u^7,v^2+v^5)]]$$
with semigroup of value $S'$ and multiplicity $\omega_1=(2,3,2)$.  $$\mathcal{O''}:=\mathcal{B(O')}=O''_1\times O''_2=K[[(t^2,u^3),(t^3,u^2+u^4)]]\times K[[(v^2,v^3)]]$$
with semigroups of values $S''_1\times S''_2:=v(O''_1)\times v(O''_2)$ and multiplicities $\omega_{2,1}=(2,2)$ and $\omega_{2,3}=2$; $$\mathcal{O'''}=\mathcal{B(O'')}=O'''_1\times O'''_2 \times O'''_3 =K[[t]]\times K[[u]]\times K[[v]]$$
with semigroups of values $S'''_1\times S'''_2\times S'''_3:=v(O'''_1)\times v(O'''_2)\times v(O'''_3)$.
In Figure \ref{fig:tree} are represented the blow-up tree $\mathcal{T}(R)$ and the multiplicity tree of semigroup $S$.

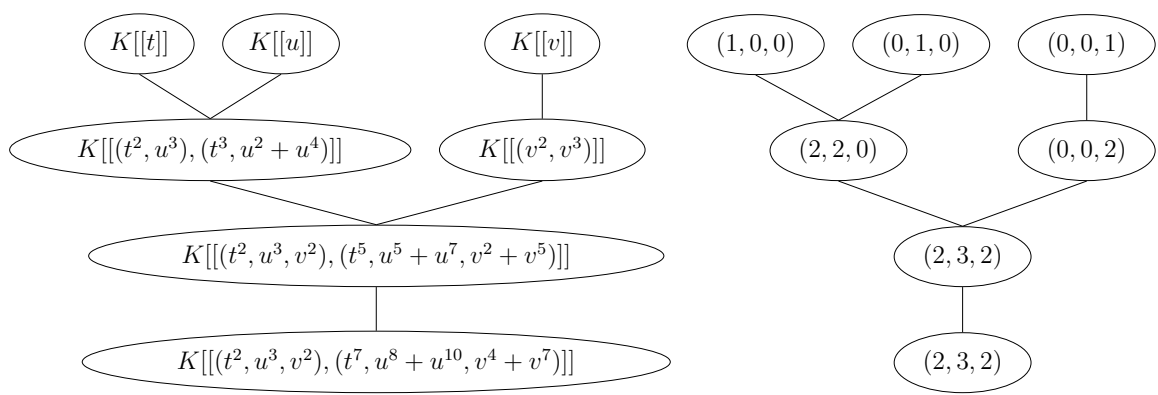
\begin{figure}[H]
\begin{subfigure}{.5\textwidth}
\centering
\begin{tikzpicture}[grow'=up,sibling distance=13pt,scale=.80]
		\tikzset{level distance=50pt,every tree node/.style={draw,ellipse}} \Tree [
.$K[[(t^2,u^3,v^2),(t^7,u^8+u^{10},v^4+v^7)]]$
        [ .$K[[(t^2,u^3,v^2),(t^5,u^5+u^7,v^2+v^5)]]$
[.$K[[(t^2,u^3),(t^3,u^2+u^4)]]$ [.$K[[t]]$ ] [.$K[[u]]$ ]] [.$K[[(v^2,v^3)]]$
[.$K[[v]]$ ]] ]]  \end{tikzpicture}
\end{subfigure}%
\begin{subfigure}{.5\textwidth}
\centering
    \begin{tikzpicture}[grow'=up,sibling distance=13pt,scale=.80]
		\tikzset{level distance=50pt,every tree node/.style={draw,ellipse}} \Tree [ .$(2,3,2)$
        [ .$(2,3,2)$ [.$(2,2,0)$ [.$(1,0,0)$ ] [.$(0,1,0)$ ]] [.$(0,0,2)$ [.$(0,0,1)$ ]] ]]  \end{tikzpicture}
\end{subfigure}%
\caption{On the left is represented the blow-up tree of R and on the right the multiplicity tree of the semigroup S.}
\label{fig:tree}
   \end{figure}
We want to show how to determine the semigroups of the tree, using the multiplicity tree of the semigroup S represented in Figure \ref{fig:tree}.
We have that $\Ap(S'''_1,2)=\Ap(S'''_2,2)=\{0,1\}$, hence we can determine the levels of Ap\'ery set  $\mathfrak{A}:=\Ap(S'''_1\times S'''_2,\omega_{2,1})$, which are
\begin{equation*}
\mathfrak{A}_0=\{(0,0)\}, \mathfrak{A}_1=\{(0,1),(1,0)\}, \mathfrak{A}_2=\{(0,\infty),(1,1),(\infty,0)\}, \mathfrak{A}_1=\{(\infty,1),(1,\infty)\}
\end{equation*}
 Using Theorem \ref{final} we have that $\Ap(S''_1,(2,2))_i=\mathfrak{A}_i+i(2,2)$ for all $i\in\{1,\ldots,4\}$.
Hence we can determine $S''_1=\Ap(S'',(2,2))+k(2,2)$ with $k\in \N$.
Starting by $\Ap(S'''_2,2)$, by Theorem \ref{final}, we obtain $\Ap(S''_2,2)_1=\{0\}$ and $\Ap(S''_2,2)_2=\{3\}$, determining the semigroup $S''_2$.
In Example \ref{ex1} we showed how to compute the levels of $\Ap(S',(2,3,2))$  knowing the levels of $\Ap(S'',(2,2))$ and $\Ap(S'',2)$; this Apéry set determines the semigroup $S'$. Using again Theorem \ref{final} we can determine the levels of $\Ap(S,(2,3,2))$ and the semigroup $S$.
\end{ex}

\subsection{Case $R$ irreducible (i.e. $c=1$ and $d_1=1$)}
Let $\CC (=\OO)$ be a plane irreducible algebroid curve (a branch) over an
algebraically closed field $K$ and
$\nu$ its valuation. The multiplicity tree is just a bamboo, so is equivalent to
the sequence of multiplicities  $\u e = (e_0,e_1,\ldots, e_n, \ldots )$
of $\CC$.
It is well known that the sequence of
multiplicities $\u e$ is an equivalent data to the semigroup $S = \nu(\CC)\subset \N$.
The sequence of multiplicities of a branch must be a (not strictly) decreasing
sequence satisfying also the following property:

\medskip
{\bf (Proximity)}
If $e_i > e_{i+1}$, let $e_{i}=q_i e_{i+1} + r_i, \ r_i<e_{i+1}$ be the Euclidean
division.
Then, $e_{i+j}=e_{i+1}$ for
$j=1,\ldots, q_i$, and, if $r_i\neq 0$ then $r_i:=e_{i+q_i+1}< e_{i+1}$.

\medskip

We will say that a sequence of positive integers
$\ee = (e_0, e_1, \ldots)$ is a {\it plane sequence} if is a decreasing one and
satisfies the Proximity relation above.

Note that, as a consequence, for each $i\ge 0$ one has that
$e_i = \sum_{k=1}^{h(i)} e_{i+k}$ for a suitable $h(i)\ge 1$.
The
{\it restriction number}, $r(e_j)$, of an element $e_j$ of the sequence $\u e$ is
defined as the number of sums $e_i = \sum_{k=1}^{h(i)} e_{i+k}$ in which $e_j$
appears as a summand.
One has that $1\le r(e_j)\le 2$ and,
following the classical terminology of the infinitely near points,
if $r(e_j)=1$ we says that $\CC^{(j)}$ is a free point and if $r(e_j)=2$, $\CC^{(j)}$ is
a satellite point.

\subsubsection{Hamburger-Noether expansions}

Let $K$ be an algebraically closed field of arbitrary
characteristic, and let $\nu(g) = \text{ord}_t(g)$ be the valuation defined on the ring
of power series $K[[t]]$.

\begin{definition}
Let $x,y\in K[[t]]$ be such that $\nu(y)\ge \nu(x)\ge 1$.
The Hamburger-Noether (HN) expansion of $\{x,y\}$
is the finite set of expressions
\begin{equation}\label{HN}
z_{j-1} = \sum_{i=1}^{h_j} a_{ji}z_j^{i} + z_j^{h_j}z_{j+1}\; ; \; 0\le j \le r
\end{equation}
where $z_{-1}=y$, $z_0=x$, $a_{ji}\in K$, $h_{r}=\infty$ and
$z_1, \ldots, z_r\in K[[t]]$ are such that
$\nu(z_0)> \nu(z_1)> \cdots > \nu(z_r) \geq 1$.
\end{definition}


\medskip

The HN expansion can be better understood from the recursive process of computation:
Being $\nu(y)\ge \nu(x)$, there exists a unique $a_{01}\in K$ such that
$\nu((y/x)-a_{01})>0$ (note that $a_{0 1}=0$ if and only if $\nu(y)>\nu(x)$). Let $y_1:=(y/x)-a_{01}$.
If $\nu(y_1)\ge \nu(x)$ we repeat the same operation with $\{x,y_1\}$.

In this way it is clear that we have one (and only one) of the next possibilities:

\begin{itemize}
\item[a)] After a finite number of steps, $h_0$,
we have $a_{0,1}, \ldots a_{0,h_0}\in K$ and $z_1\in K[[t]]$ such
that $\nu(z_1)<\nu(x)$ and
$
y = a_{01}x + a_{02}x^2 + \cdots + a_{0\,h_0}x^{h_0} + x^{h_0}z_1. \;
$
\item[b)]
We have an infinite series
$y=  a_{0 1}x + a_{0 2}x^2 + \cdots $ and the HN expansion is just this series.
\end{itemize}

Now, in case a) the process continue with the system $\{z_1,x\}$ in new row.
After a finite number, $r$, of steps a) we reach the case b) because
$\nu(z_i)< \nu(z_{i-1})$ for every $i$ and the valuation $\nu$ is discrete.

\medskip

\begin{oss}
It is useful to write the HN expansion in a more detailed way (called reduced form).
To do this, let $s_1<s_2 < \cdots <s_g=r$ be the ordered set of indices $j$ such that
$\nu(z_j) | \nu(z_{j-1})$ and for convenience we put also $s_0=0$. Then
in the row $j=s_i$
there exists the minimum $k_i$ such that $a_{j\, k_i}\neq 0$ (note that
$k_i\ge 2$, because also $\nu(z_j)<\nu(z_{j-1})$).
In this way
the HN expansion (\ref{HN}) could be writen as:
\begin{equation}\label{HN2}
\begin{aligned}
(z_{-1})=y & = a_{0\,1}x + \cdots + a_{0\,h_0}x^{h_0} + x^{h_0}z_1 \\
(z_0) = x &= z_1^{h_1}z_2 \\
 & \cdots \\
z_{s_1-1} &= a_{s_1\,k_1}z_{s_1}^{k_1} + \cdots + a_{s_1\,h_{s_1}}z_{s_1}^{h_{s_1}} +
z_{s_1}^{h_{s_1}} z_{s_1+1} \\
z_{s_1} &=  z_{s_1+1}^{h_{s_1+1}} z_{s_1+2} \\
& \cdots    \\
z_{s_g-1} &= a_{s_g\,k_g}z_{s_g}^{k_g} + \cdots +
\end{aligned}
\end{equation}
where, for $i=1,\ldots, g$ one has  $a_{s_i\,k_i}\neq 0$.
\end{oss}

\subsubsection{Plane curves and HN expansions}

Let $ \CC= K[[x,y]]= K[[X,Y]]/P$ be a plane algebroid branch over $K$
and let $\text{\bf m} = (x,y)$ its maximal ideal.
Let $\k{\CC} \simeq K[[t]]$ be the integral closure of $\CC$ in its field of
fractions,
so the valuation $\nu$ of $\CC$ is given by
$\nu(g) = \text{ord}_t (g(x(t), y(t)))$.
We assume that $\nu(x)\le \nu(y)$,
i.e. $x$ is a transversal parameter.

The {\it Hamburger-Noether expansion\/} of $\CC$ (with respect to $\{x,y\}$) is the
Hamburger-Noether expansion of $\{x,y\}\in K[[t]]$.
Notice that in this case it must be $\nu(z_r)=1$.

\medskip

Let  $\ee=(e_0, e_1\ldots )$ be the multiplicity sequence of $\CC$, one has
$e_0=\nu(x)$.
The blow-up of $\CC$ is the ring
$\CC^{(1)}= \CC [y/x]\subset \k{\CC}$, ${\mathfrak m}_1 = (x,y_1)$ is its maximal
ideal
and $\CC^{(1)} \simeq K[[x,y_1]]$. The coefficient $a_{0\,1}$ is the coordinate on the
exceptional divisor of the strict transform, i.e. $y-a_{0\,1}x$ is just the tangent
to $\CC$.
The multiplicity of  $\CC^{(1)}$ is $e_1 = \min\{\nu(x), \nu(y_1)\}$ and so
$e_1 = \nu(x)=e_0$ if $\nu(y_1)\ge \nu(x)$ and $e_1 = \nu(y_1)$ if
$\nu(y_1)<\nu(x)$.
In this way it is clear that the process of formation of the HN expansion exactly
reproduces the process of resolution of the singularity. In fact one has that (see
\cite[Proposition 2.2.9]{campillo}) the HN expansion of $\CC^{(1)}$ with respect to
$\{x,y_1\}$ is:
\begin{enumerate}
\item If $h_0>1$:
\begin{equation*}
\begin{aligned}
y_1 &= a_{0 2}x + \cdots + a_{0 h_0}x^{h_0-1} + x^{h_0-1}z_1\\
z_{j-1} &= \sum_{i=1}^{h_j} a_{ji}z_j^{i} + z_j^{h_j}z_{j+1}\; ; \; 1\le j \le r
\end{aligned}
\end{equation*}
\item If $h_0=1$:
\begin{equation*}
z_{j-1} = \sum_{i=1}^{h_j} a_{ji}z_j^{i} + z_j^{h_j}z_{j+1}\; ; \; 1\le j \le r
\end{equation*}
\end{enumerate}

In particular, let $n_i= \nu(z_i)$ be the values of the elements $z_i\in K[[t]]$, $0\le i\le r$.
Then the multiplicity sequence $\ee$ of $\CC$ is
$$
\u{n}=(n_0,\ldots, n_0, n_1,\ldots, n_1, \ldots, n_i,\ldots, n_i, \ldots, n_r, \ldots )
$$
where $n_i$ appears $h_i$ times.


\subsubsection{Multiplicity sequence and HN expansions.}

A set of formal expressions
\begin{equation}
z_{j-1} = \sum_{i=1}^{h_j} a_{ji}z_j^{i} + z_j^{h_j}z_{j+1}\; ; \; 0\le j \le r
\end{equation}
where $h_0,\ldots, h_{r-1}$ are positive integers, $h_{r}=\infty$ and
$a_{ji}\in K$ are such that $a_{j 1}=0$ if $j>0$, will be called an Hamburger-Noether
type expansion.

\medskip

Let us fix $r\ge 0$, and, if $r\ge 1$ let $1\le g\le r$. Let $h_0,\ldots, h_{r-1}$ be positive integers,
$0<s_1<\cdots < s_g = r$ and for $i=1,\ldots,g$ let $k_i$ integers such that
$2\le k_i\le h_{s_i}$.
Let
$H = (H_0,\ldots , H_r)$ be the sequence defined by
$H_j=[k_i,h_{s_i}]$ if $j=s_i$ and $H_j=h_j$ otherwise. The sequence $H$ defines an HN
type
expansion such that its reduced form is like (\ref{HN2}) with arbitrary coefficients
$a_{s_i \, k}\in K$, $0\le i\le g$, $k_i\le k\le h_{s_i}$,
$a_{s_i, k_i}\neq 0$. We say that this is an HN expansion of type $H$.

\begin{lemma}
\label{bijcorr}
There is a bijective correspondence between plane sequences $\ee$ and
finite sequences $H$ as above.
\end{lemma}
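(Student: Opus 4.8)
The plan is to write down the two maps explicitly and check they are mutually inverse. The entire content is a dictionary between the Euclidean‑division bookkeeping encoded in the Proximity relation and the combinatorial data packaged in $H$, so I do not expect any deep idea, only a careful junction‑by‑junction analysis.

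\textbf{From $H$ to a plane sequence.} Given a sequence $H$ of the type described, reading off the valuation $\nu$ in each row of the reduced Hamburger--Noether expansion $(\ref{HN2})$ determines values $n_r=1$ and, descending, $n_{j-1}=k_i\,n_j$ when $j=s_i$ and $n_{j-1}=h_j\,n_j+n_{j+1}$ otherwise; by the formula recalled above, the associated multiplicity sequence is $\ee=(n_0,\dots,n_0,n_1,\dots,n_1,\dots)$, where each $n_i$ occurs $h_i$ times (so $n_r=1$ forever). Since $k_i\ge 2$ and $h_j\ge 1$, the recursion forces $n_0>n_1>\dots>n_r=1$, hence $\ee$ is decreasing. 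For Proximity I would inspect, for each $j$ with $1\le j\le r$, the junction between the block of $n_{j-1}$'s and the block of $n_j$'s, performing the Euclidean division of $n_{j-1}$ by $n_j$: if $j\ne s_i$ for all $i$ then $n_{j-1}=h_j n_j+n_{j+1}$ with $0<n_{j+1}<n_j$, so the quotient is $h_j$ and the remainder is $n_{j+1}\ne 0$, and Proximity asks exactly that the $n_j$-block have length $h_j$ and be followed by $n_{j+1}$; if $j=s_i$ then $n_{j-1}=k_i n_j$, the remainder is $0$, and Proximity only asks the $n_j$-block to have length $\ge k_i$, which holds because $h_{s_i}\ge k_i$ (for $i=g$ this block is the tail of $1$'s, of infinite length). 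Inside a block, $e_\ell=e_{\ell+1}$ imposes nothing. Thus $\ee$ is a plane sequence.

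\textbf{From a plane sequence to $H$.} Conversely, a plane sequence $\ee$ stabilizes at $1$, so its distinct values form a finite decreasing list $n_0>n_1>\dots>n_r=1$; let $h_i$ be the number of occurrences of $n_i$ (finite for $i<r$, and set $h_r=\infty$), let $s_1<\dots<s_g$ be the indices $j\in\{1,\dots,r\}$ with $n_j\mid n_{j-1}$ (note $s_g=r$, since $n_r=1$), and set $k_i=n_{s_i-1}/n_{s_i}\ge 2$. This produces a sequence $H$ of the required type; the only non‑obvious constraint, $k_i\le h_{s_i}$, again comes from Proximity at the junction just before the $n_{s_i}$-block, exactly as above. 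Finally I would check the two assignments are mutually inverse. Starting from $H$, building $\ee$ and re‑extracting data: the distinct values and multiplicities are the $n_i$'s and $h_i$'s by construction, the indices with $n_j\mid n_{j-1}$ are precisely the $s_i$'s (for $j=s_i$ one has $n_{j-1}=k_i n_j$, while for the other $j$ one has $n_{j-1}=h_j n_j+n_{j+1}$ with $0<n_{j+1}<n_j$, not a multiple of $n_j$), and the recovered ratios are the $k_i$'s. Starting from a plane $\ee$ and extracting $H$: the recursion $n_{j-1}=k_i n_j$ (for $j=s_i$) together with $n_{j-1}=h_j n_j+n_{j+1}$ (for the remaining $j$ --- this last identity being exactly what Proximity yields at that junction) rebuilds the $n_i$'s, and the multiplicities are the $h_i$'s, so $\ee$ is recovered. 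The degenerate case $r=0$ corresponds to $\ee=(1,1,\dots)$.

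\textbf{Expected main difficulty.} I do not anticipate a real obstacle beyond bookkeeping; the single point needing attention is the equivalence, at each junction, between the Proximity relation and the statement ``$n_j\mid n_{j-1}$ and the $n_j$-block has length $\ge n_{j-1}/n_j$'' in the divisible case versus ``$n_{j-1}=h_j n_j+n_{j+1}$ with $0<n_{j+1}<n_j$'' in the non‑divisible case --- this is what guarantees that the run‑length encoding of $\ee$ together with the list of special indices carries exactly the information in $H$. Some care is also needed with the convention $s_0=0$ (which plays no role in the correspondence) and with matching the terminal string $1,1,\dots$ of a plane sequence to $n_r=1$, $h_r=\infty$.
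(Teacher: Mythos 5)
Your proposal is correct and follows essentially the same route as the paper: both directions are given by the same explicit assignments ($n_{j-1}=k_i n_{s_i}$ at the special rows, $n_{j-1}=h_j n_j+n_{j+1}$ otherwise, with $h_i$ the run lengths and $s_i$ the indices where $n_j\mid n_{j-1}$), and the two maps are checked to be mutually inverse. The only difference is that you spell out the junction-by-junction equivalence with the Proximity relation and the inverse check, which the paper dismisses as ``obvious''/``trivial''; your verification of these points is accurate.
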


\begin{proof}
Let $\ee$ be a plane sequence. Let us write
$\ee = (n_0,\ldots, n_0,n_1\ldots, n_1,\ldots , n_r, \ldots)$ in such a way that
$n_i>n_{i+1}$ and let $h_i$ be the number of repetitions of $n_i$ ($h_r=\infty$).
Let $s_1<s_2<\cdots<s_g$ be the indexes $j$, $1\le j \le r$, such that
$n_j | n_{j-1}$ and $k_i = n_{j-1}/n_j \ge 2$ for $j=s_i$.
The proximity relation for  $n_{j-1}$ implies that $k_i\le h_{s_i}$.
Thus, we have defined a sequence $H(\ee)$.

Let $H$ be a sequence defined as above.
Then $H$ allows to define an unique sequence of positive
integers $(n_0,\ldots, n_0,n_1,\ldots, )$ starting with $n_j=1$ for $j\ge r$. Then
if $j<r$ define $n_{j-1} = h_j n_j + n_{j+1}$ if $j\neq s_i$ for all $i$ and
$n_{s_i-1} = k_i n_{s_i}$ if $j= s_i$.
Obviously this sequence $E(H)=\u{n}$ satisfies the proximity conditions and so is a
plane sequence. It is trivial that $E(-)$ and $H(-)$ are applications
inverse to each other.
\end{proof}

\begin{prop}
An Hamburger-Noether type expansion defines an unique plane irreducible curve
$\CC=K[[x,y]]$ with $\k{\CC}\simeq K[[z_r]]$ and whose HN sequence is the prefixed
one.

Moreover, let  $\ee$ be a plane sequence and
let $H(\ee)$ be a sequence defined as above for $\ee$. Then, an HN expansion of type
$H(\ee)$
defines an unique plane irreducible curve over $K$ such that its multiplicity
sequence is $\ee$.
\end{prop}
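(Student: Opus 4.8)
The plan is to realize the curve by reversing the Hamburger--Noether algorithm. Fix a variable $t$, set $z_r:=t$, and read the $r+1$ relations of the given (type-$H$) expansion, from the bottom up, as recursive \emph{definitions}
$$z_{j-1} := \sum_{i=1}^{h_j} a_{ji}z_j^{i} + z_j^{h_j}z_{j+1}\in K[[t]],\qquad j=r,r-1,\ldots,1,0,$$
(the tail term being absent for $j=r$, where $h_r=\infty$), producing power series $z_{r-1},\ldots,z_1,\ x:=z_0,\ y:=z_{-1}$. Put $\CC:=K[[x,y]]\subseteq K[[t]]$. Solving each relation for $z_{j+1}$ shows $z_{j+1}\in K(z_{j-1},z_j)$, so by induction every $z_j$, in particular $z_r=t$, lies in $\mathrm{Frac}(\CC)$; hence $\mathrm{Frac}(\CC)=K((t))$ and $\overline{\CC}=K[[t]]=K[[z_r]]$. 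Since $K[[t]]$ is module-finite over $K[[x]]\subseteq\CC$, the ring $\CC$ is a one-dimensional complete local domain with $\mathfrak m=(x,y)$, hence of the form $K[[X,Y]]/P$ with $P$ a height-one prime, i.e. a plane algebroid branch.

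The technical core is a valuation computation: I would prove by descending induction on $j$ that
$$1=\nu(z_r)<\nu(z_{r-1})<\cdots<\nu(z_1)<\nu(z_0)=\nu(x),\qquad \nu(y)\ge\nu(x).$$
At a special row $j=s_i$ the conditions $a_{s_i k_i}\neq 0$, $2\le k_i\le h_{s_i}$ give $\nu(z_{s_i-1})=k_i\,\nu(z_{s_i})>\nu(z_{s_i})$ (the tail term $z_{s_i}^{h_{s_i}}z_{s_i+1}$ has strictly larger value); at an ordinary row $z_{j-1}=z_j^{h_j}z_{j+1}$, so $\nu(z_{j-1})=h_j\,\nu(z_j)+\nu(z_{j+1})>\nu(z_j)$; and the tail $x^{h_0}z_1$ in row $0$ forces $\nu(y)\ge\nu(x)$. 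This is the step that really uses the structure of a type-$H$ expansion, and where one must be careful.

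Granting these inequalities, I would run the Hamburger--Noether algorithm on $\{x,y\}$ and check it reproduces the prescribed expansion row by row. Since $\nu(y)\ge\nu(x)$ the algorithm starts at row $0$; at row $j$, after removing constants the running remainder equals $\sum_{i>k}a_{ji}z_j^{\,i-k}+z_j^{h_j-k}z_{j+1}$, which has value $\ge\nu(z_j)$ for every $k<h_j$ but value $\nu(z_{j+1})<\nu(z_j)$ once $k=h_j$; so the row closes after exactly $h_j$ steps with remainder $z_{j+1}$, the extracted constants being precisely $a_{j1},\ldots,a_{jh_j}$ (and $a_{j1}=0$ for $j>0$, since $\nu(z_{j-1})>\nu(z_j)$). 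At the last row $h_r=\infty$ yields the full series. By the uniqueness of the Hamburger--Noether expansion this is the expansion of $\CC$; moreover any plane branch with this expansion has its generators $x,y$ given by the above power series in a uniformizer $z_r$ of its integral closure, which gives the asserted uniqueness.

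For the ``moreover'' part, write $n_j:=\nu(z_j)$. The multiplicity sequence of $\CC$ is $(n_0,\ldots,n_0,n_1,\ldots,n_1,\ldots)$ with $n_j$ repeated $h_j$ times, as recalled before this subsection; and the valuation computation above shows the $n_j$ obey exactly $n_{s_i-1}=k_i n_{s_i}$, $n_{j-1}=h_j n_j+n_{j+1}$ otherwise, and $n_j=1$ for $j\ge r$ --- which is precisely the recursion defining $E(H)$ in the proof of Lemma \ref{bijcorr}. Starting from $H=H(\ee)$, this recursion returns the distinct values of $\ee$, and since $H(\ee)$ records $h_j$ as the multiplicity of $n_j$ in $\ee$, the multiplicity sequence of $\CC$ equals $\ee$. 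The main obstacle throughout is the bookkeeping that makes the backward construction a genuine inverse of the (deterministic) Hamburger--Noether algorithm, i.e. the valuation inequalities together with the per-row termination count.
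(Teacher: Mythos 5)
Your proposal is correct and follows essentially the same route as the paper: set $z_r=t$, build $x=z_0$, $y=z_{-1}$ by reversing the substitutions, observe recursively that each $z_j$ (hence $t$) lies in the fraction field of $\CC=K[[x,y]]$ so that $\overline{\CC}=K[[z_r]]$, check that running the Hamburger--Noether algorithm on $\{x,y\}$ returns the prescribed expansion, and deduce the multiplicity statement from Lemma \ref{bijcorr} together with the description of $\ee$ as $(n_0,\ldots,n_0,n_1,\ldots)$ with $n_j=\nu(z_j)$ repeated $h_j$ times. The paper leaves the valuation inequalities and the per-row termination count as ``obvious''; you have simply written them out.
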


\begin{proof}
Let $x=z_0$, $y=z_{-1}$, $t=z_r$.
Performing the successive (inverse) substitutions we have a parametrization $x=
x(t)$, $y=y(t)$ and so we have a morphism
$\varphi: K[[X,Y]]\to K[[t]]$ defined by $\varphi(X)=x(t)$, $\varphi(Y)=y(t)$.
The ring $\CC=K[[x,y]]=K[[X,Y]]/\ker(\varphi)$ is the ring of an irreducible
algebroid plane curve.
Moreover, if $K((x,y))$ is the field of fractions of $\CC$, it is easy to see
(recursively) that $z_i\in K((x,y))$ for all $i$, in particular
$t=z_r\in K((x,y))$ and so $K((x,y)) = K((t))$, $\k{\CC} = K[[t]]$.
Obviously the HN expansion of $\CC$ with respect to $\{x,y\}$ is the one we started
with.

The second assertion is a trivial consequence of the first one and of Lemma \ref{bijcorr}.
\end{proof}

\begin{oss}
The relation between a plane sequence $\ee$ and the sequence $H(\ee)$ implies that
the free points (multiplicities) are exactly:
\begin{enumerate}
\item The first $h_0$ points of multiplicity $n_0$ and the first one of multiplicity
$n_1$
\item For each $t=1,\ldots,g-1$ the last $h_{s_t}-k_t\ge 0$ points of multiplicity
$n_{s_t}$ and the first one of multiplicity $n_{s_t +1}$.
For $t=g$ all the points of multiplicity $n_{s_g}=1$ but the first $k_g$.
\end{enumerate}
As a consequence, the free points (except the first one) are in a one to one correspondence with the coefficients
$\{a_{j i}\}$ of the HN expansion which are not forced to be zero.
So, for any choice of
$$
\{a_{s_t,i}\in K\; | \; 0\le t\le  g; k_{t}\le i\le h_{s_t}; a_{s_t,k_t}\neq 0\}
$$
one has a curve with multiplicity sequence $\ee$.

Moreover, the Euclidean algorithm for $n_{s_t}$ and $n_{s_t+1}$ determines
all the multiplicities $n_{i}$ ($s_t+2 \le i \le s_{t+1}$), the integers
 $h_{i}$ ($s_t+1 \le i <  s_{t+1}$) and also $k_{s_{t+1}}$. That is, all the
satellite points after the free point $n_{s_t+1}$ up to the next free point.

The rows $\{s_i: i=0,\ldots,g\}$ are called {\it the free rows} and the rest {\it the satellite rows} because of the distribution of free and satellite points.
\end{oss}

\subsection{Case of two branches (i.e. $d=2$).}

Let us assume first that the ring $R=\OO$ is a local one (i.e. $c=1$) with two
branches
$\CC$ and $\CC'$ ($d=2$).
Let ${\mathfrak p}$, ${\mathfrak p}'$ be the minimal prime ideals of $\OO$, then the
branch $\CC$ is $\CC=R/{\mathfrak p}$ and the
branch $\CC'$ is $\CC'=R/{\mathfrak p}'$.
Let
$\u e= (e_0,e_1,\ldots)$ (resp. $\ee'= (e'_0,e'_1,\ldots)$) be the
sequence of multiplicities of the branch $\CC$
(resp. $\CC'$).

\medskip

The {\it splitting number\/} of $\OO$ is defined as the biggest positive integer $k$
such that $\OO^{(k)}$ is local. Thus, one has that $\OO^{(k)}$ is local and
$\OO^{(k+1)}\simeq \CC^{(k+1)}\times {\CC'}^{(k+1)}$.
The multiplicity tree of $\OO$ is the result of identifying the bamboos of both
branches $\CC$ and $\CC'$ up to level $k$, the weights on the trunk are
the fine multiplicities of $\OO^{(j)}$, for $j\le k$, i.e.
$\{(e_j, e'_j); j=0,\ldots k\}$. After the splitting level $k$, i.e. for $j\ge k+1$,
the weights are the
fine multiplicity of $\CC^{(j)}$: $(m(\CC^{(j)}), 0) = (e_j,0)$ and the one of
${\CC'}^{(j)}$: $(0, m({\CC'}^{(j)})) = (0, e'_j)$.

Notice that, if $R$ is not local, (i.e. $d=2$ and $c=2$) then the splitting number is defined as
$k=-1$.

\medskip

The intersection multiplicity of $\CC$ and $\CC'$ is
given by the
Noether formula 
$[\CC,\CC'] = \sum_{j=0}^k e_j e'_j$  (an easy consequence of  the equality $[\CC, \CC']= e_0 e'_0 + [\CC^{(1)}, \CC'^{(1)}]$, see
\cite[Remark 2.3.2. iv)]{campillo}).  Thus, if one fix both sequences of
multiplicities
$\u e$ and $\u e'$, then the splitting number $k$ is equivalent to the intersection
multiplicity. As a consequence one has that the semigroup of values $S$ is an
equivalent data to the multiplicity tree.

\medskip

The splitting number (for a fixed pair of plane sequences $\u e$ and $\u e'$) is
not an arbitrary one.

\begin{definition}\label{admissible}
We will say that an integer $k\ge -1$ is {\it admissible} if $k=-1$ or $k\ge 0$ and it
satisfies the following properties:
\begin{enumerate}
\item $e_{i-1}=e_i$ if and only if $e'_{i-1}=e'_i$ for $i=1,\ldots, k-1$.
\item $r(e_j)=r(e'_j)$ for all $j\le k$.
\item
If $e_{k-1}>e_k$ then $e'_{k-1}=e'_k$
\item
If $r(e_k)=r(e'_k)=r(e_{k+1})=r(e'_{k+1})=2$ and if $e_{k-1}=e_k$, then
$e'_{k-1}>e'_k$
\end{enumerate}
\end{definition}

Notice that $k=-1$ is always admissible for any pair of plane
sequences.

\medskip

\begin{prop}\label{new}
Let $k\ge 0$ be an integer with the properties 1 and 2 of the above definition
\ref{admissible}. Then $k$ is admissible if and only if
either $k$ is maximal with the conditions 1 and 2 or
$r(e_{k+1})=r(e'_{k+1})=1$.
\end{prop}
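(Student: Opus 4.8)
The plan is to reduce the whole statement to two elementary facts about a single plane sequence and then to verify clauses 3 and 4 of Definition \ref{admissible} by a short Boolean case analysis on the behaviour of $\u e$ and $\u e'$ at index $k$. First I would dispose of $k=0$ separately: there clauses 3 and 4 of Definition \ref{admissible} are vacuous, so $k$ is admissible, while on the other side $r(e_1)=r(e'_1)=1$ always holds (the point $\CC^{(1)}$ can be proximate only to $\CC^{(0)}$), so both sides of the equivalence hold. Hence from now on I assume $k\ge 1$.

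The key lemma I would isolate is the following, valid for any plane sequence: (a) if $e_{k-1}>e_k$ then $r(e_{k+1})=2$; and (b) if $e_{k-1}=e_k$ and $r(e_k)=1$ then $r(e_{k+1})=1$. Both follow directly from the decomposition $e_i=\sum_{l=1}^{h(i)}e_{i+l}$ recalled before Lemma \ref{bijcorr}, via an inspection of $h(k-1)$. A strict drop at $k$ forces $h(k-1)\ge 2$ (in the Euclidean division $e_{k-1}=qe_k+r$ one has $q\ge 2$ when $r=0$, and $h(k-1)=q+1$ when $r>0$), so the index $k+1$ already occurs in the sum indexed by $i=k-1$; as it also occurs in the sum indexed by $i=k$, this gives $r(e_{k+1})=2$. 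If instead $e_{k-1}=e_k$, then $h(k-1)=1$ and the sum indexed by $k-1$ reaches only the index $k$; were $k+1$ to occur in a sum indexed by some $i\le k-2$, then $k$ would occur there as well (the indices of a single sum form an interval starting at $i+1\le k$), forcing $r(e_k)\ge 2$, contrary to hypothesis. I expect this lemma to carry essentially all the content of the proposition; the delicate part is keeping track of the remainder $r$ in the proximity step.

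For the implication $\Leftarrow$ I would argue as follows. If $r(e_{k+1})=r(e'_{k+1})=1$, then by the contrapositive of (a), applied to each branch, $e_{k-1}=e_k$ and $e'_{k-1}=e'_k$, so clauses 3 and 4 hold vacuously. If instead $k$ is maximal with conditions 1 and 2, one of these fails at $k+1$. Should condition 2 fail, i.e. $r(e_{k+1})\neq r(e'_{k+1})$, clause 4 is vacuous, and for clause 3 a drop $e_{k-1}>e_k$ would force $r(e_{k+1})=2$ by (a), hence $r(e'_{k+1})=1$, hence $e'_{k-1}=e'_k$ by the contrapositive of (a). Should condition 1 fail, then exactly one of $\u e,\u e'$ has a strict drop at $k$ while the other is stationary, and I would check the two sub-cases by hand: if $e_{k-1}>e_k$ and $e'_{k-1}=e'_k$, the conclusion of clause 3 is true and the hypothesis of clause 4 is false; if $e_{k-1}=e_k$ and $e'_{k-1}>e'_k$, clause 3 is vacuous and the conclusion of clause 4 is precisely the assumption. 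In all cases $k$ is admissible.

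For $\Rightarrow$ I would take $k$ admissible but not maximal with conditions 1 and 2, so 1 and 2 hold at $k+1$ as well; then $r(e_{k+1})=r(e'_{k+1})=:\rho_1$ and $e_{k-1}=e_k\Leftrightarrow e'_{k-1}=e'_k$, and the goal is $\rho_1=1$. Assuming $\rho_1=2$: the two branches cannot both have a strict drop at $k$ (that would violate clause 3 once combined with the equivalence), so by the equivalence both are stationary, $e_{k-1}=e_k$ and $e'_{k-1}=e'_k$. With $\rho:=r(e_k)=r(e'_k)$, the case $\rho=2$ makes clause 4 applicable (all four restriction numbers equal $2$ and $e_{k-1}=e_k$) and yields the contradictory $e'_{k-1}>e'_k$, whereas the case $\rho=1$ contradicts $\rho_1=2$ through fact (b). Hence $\rho_1=1$, which is the right-hand side. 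The one real obstacle is proving (a) and (b) cleanly; everything after that is purely formal bookkeeping of the four clauses of admissibility.
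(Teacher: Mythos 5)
Your proof is correct and follows essentially the same route as the paper's: both directions rest on the maximality dichotomy combined with the observation that a strict drop $e_{k-1}>e_k$ forces $r(e_{k+1})=2$. Your facts (a) and (b) are exactly the combinatorial inputs the paper relies on --- it states (a) explicitly and uses the contrapositive of (b) tacitly when invoking property 4 to reach a contradiction (one needs $r(e_k)=r(e'_k)=2$ there) --- so your write-up is if anything slightly more complete, e.g. in isolating (b) and in treating $k=0$ separately.
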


\begin{proof}
Let us assume that $k$ is admissible and also that the conditions 1 and 2 are also
true for $k+1$. In particular (see property 3), $e_{k-1}=e_k$ and $e'_{k-1}=e'_k$.
Moreover, $r(e_{k+1})=r(e'_{k+1})$ and (see property 4) if is equal to 2 one reaches
a contradiction. Thus we have proved that, if $k$ is not maximal then
$r(e_{k+1})=r(e'_{k+1})=1$.

Let us show the sufficient condition. Firstly, note that the condition
$e_{k-1}>e_k$ implies that $r(e_{k+1})=2$. So, if $e_{k-1}>e_k$ and also
$e'_{k-1}>e'_k$ then $r(e_{k+1})=r(e'_{k+1})=2$ and $k$ is forced to be maximal. But
obviously this is not the case because 1 and 2 are also true for $k+1$. This proves
property 3.

To prove property 4, the hypothesis
$r(e_{k+1})=r(e'_{k+1})=2$ implies that $k$ must be maximal with properties 1 and 2.
So, if $e_{k-1}=e_k$ then must be $e'_{k-1}>e'_k$ and the proof is finished.
\end{proof}

As a consequence the properties of the definition can be expressed
in a somewhat simpler form in the following way:

\begin{definition}\label{admissible2}
We will say that an integer $k\ge -1$ is {\it admissible} if $k=-1$ or $k\ge 0$ and it
satisfies the following properties:
\begin{enumerate}
\item $e_{i-1}=e_i$ if and only if $e'_{i-1}=e'_i$ for $i=1,\ldots, k-1$.
\item $r(e_j)=r(e'_j)$ for all $j\le k$.
\item
Either $k$ is maximal with the conditions 1 and 2 or $r(e_{k+1})=r(e'_{k+1})=1$.
\end{enumerate}
\end{definition}

\begin{oss}
Notice that, if $e_{i-1}>e_i$ then $r(e_{i+1})=2$. As a consequence if $k$ is
admissible then:
\begin{enumerate}
\item
If $k$ is not maximal with properties 1 and 2, then  $r(e_{k+1})=r(e'_{k+1})=1$, i.e
both are free points and then $(e_{k-1}, e'_{k-1})=(e_k,e'_k)$.
However it is possible to have
$(e_{k-1}, e'_{k-1})=(e_k,e'_k)$ and
$r(e_{k+1})\neq r(e'_{k+1})$.
\item
The situation $e_{k-1}>e_k$ and $e'_{k-1}>e'_k$ is not possible. In particular
$e_k$ and $e'_k$ can not be simultaneously terminal free points.
\end{enumerate}
\end{oss}

\subsubsection{Intersection multiplicities with HN expansions}

Let $\OO\simeq K[[x,y]]$ be the local ring of a plane curve with two branches,
$\CC$ and  $\CC'$, let
$\ee = (e_0,e_1,\ldots)$ and  $\ee' = (e'_0,e'_1,\ldots)$ be the
multiplicity sequences of $\CC$ and $\CC'$.
Assume that $x$ is a transversal parameter for $\CC$ and $\CC'$.
Let $z_0 = z'_0=x$, $z_{-1}=z'_{-1}=y$ and let
\begin{equation}
\begin{aligned}
z_{j-1} & = \sum_{i=1}^{h_j} a_{ji}z_j^{i} + z_j^{h_j}z_{j+1}\; ; \; 0\le j \le r
\\
z'_{j-1} & = \sum_{i=1}^{h'_j} a'_{ji}(z'_j)^{i} + (z'_j)^{h'_j}z'_{j+1}\; ; \; 0\le j
\le r'
\end{aligned}
\end{equation}
be the HN expansions of $\CC$ and $\CC'$ with respect to $x,y$.

\medskip

Let $s$ be the largest integer such that
$h_j=h'_j$ for all $j<s$ and $a_{j i}=a'_{j i}$ for
$j<s$ and $i\le h_j$. Let $t\le \min\{h_s+1,h'_s+1\}$ be the largest integer for
which $a_{s i} =a'_{s i}$ for all $i<t$.

Note that, if $t < \min\{h_s+1,h'_s+1\}$ then
$a_{s\, t}\neq a'_{s\, t}$, in particular
$s= s_q$ for some $0\le q\le \min\{g,g'\}$. Otherwise,
$t = \min\{h_s+1,h'_s+1\}$ and necessarily $h_s\neq h'_s$.

\begin{prop}
With the above notations, let $S=\sum_{0}^{s-1} h_j n_j n'_j$. Then one has:
\begin{enumerate}
\item
The splitting number $k$ between $\CC$ and $\CC'$ is an equivalent data to the pair
$(s,t)$, in fact
$$
k = h_0+ h_1 + \cdots + h_{s-1} + t-1 \; .
$$
\item
The intersection multiplicity  $[\CC,\CC']$ is:
\begin{itemize}
\item[a)] If $t < \min\{h_s+1, h'_s+1\}$, then $[\CC,\CC']=S + t n_s n'_s$.
\item[b)] If $t=h'_s+1 < h_s+1 $, then $[\CC,\CC']=S + h'_s n_s n'_s + n'_{s+1}n_s$.
\item[c)] If $t=h_s+1 < h'_s+1 $, then $[\CC,\CC']=S + h_s n_s n'_s + n_{s+1}n'_s$.
\end{itemize}
\end{enumerate}
\end{prop}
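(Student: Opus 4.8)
The plan is to reduce everything to facts already recalled: the Noether formula $[\CC,\CC']=\sum_{\ell=0}^{k}e_\ell e'_\ell$ and its one-step form $[\CC,\CC']=e_0e'_0+[\CC^{(1)},{\CC'}^{(1)}]$, together with the explicit action of a single blow-up on Hamburger--Noether expansions and the way the multiplicity sequence is read off them (so $e_\ell=n_j$ whenever $h_0+\cdots+h_{j-1}\le\ell<h_0+\cdots+h_j$, and likewise with primes for $\CC'$). Everything then becomes bookkeeping on the HN data, and the argument is the one of \cite{apery} with Hamburger--Noether expansions in place of Newton--Puiseux ones; in particular it is insensitive to the characteristic.

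First I would prove (1). The key observation is that $\CC^{(\ell+1)}$ and ${\CC'}^{(\ell+1)}$ are the same infinitely near point precisely when $\CC^{(\ell)}$ and ${\CC'}^{(\ell)}$ already are and the two strict transforms have the same tangent there; and, by the transformation rules for HN expansions under blow-up, this tangent (together with the free/satellite type of the point, which is forced by whether the current row is exhausted) is exactly the datum carried by the $\ell$-th ``slot'' of the expansion of $\CC$, that is, by the pair (row index $j$, coefficient $a_{ji}$) for $\ell=h_0+\cdots+h_{j-1}+i$, $1\le i\le h_j$. Hence, by induction on $\ell$, the equality $\CC^{(\ell)}={\CC'}^{(\ell)}$ holds exactly while the expansions of $\CC$ and $\CC'$ agree in all slots $1,\dots,\ell$; and by the definitions of $s$ and $t$ those are precisely the slots $1,\dots,h_0+\cdots+h_{s-1}+(t-1)$, the first disagreement being at slot $h_0+\cdots+h_{s-1}+t$---either $a_{st}\neq a'_{st}$, in the case $t<\min\{h_s+1,h'_s+1\}$, or one of the two rows $s$ terminates there, in the case $t=\min\{h_s+1,h'_s+1\}$ with $h_s\neq h'_s$. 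Thus $k=h_0+\cdots+h_{s-1}+t-1$. One may instead argue entirely by the one-step Noether recursion: if $a_{01}\neq a'_{01}$ then $[\CC^{(1)},{\CC'}^{(1)}]=0$, since the strict transforms hit distinct points of the exceptional divisor and meet nowhere else; if $a_{01}=a'_{01}$, a case-by-case inspection of the blow-up rules (the two cases $h_0>1$ and $h_0=1$) shows that both $k$ and the integer $h_0+\cdots+h_{s-1}+t-1$ drop by exactly one on passing to $(\CC^{(1)},{\CC'}^{(1)})$, and one concludes by induction.

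Given (1), part (2) is a direct computation: split the Noether sum at $\ell=h_0+\cdots+h_{s-1}$. For $0\le\ell<h_0+\cdots+h_{s-1}$ the sequences $\ee$ and $\ee'$ run through their blocks in step (because $h_j=h'_j$ for $j<s$), so $e_\ell e'_\ell=n_j n'_j$ on the $j$-th block and this part contributes $\sum_{j=0}^{s-1}h_j n_j n'_j=S$. The remaining $t$ indices $\ell=h_0+\cdots+h_{s-1},\dots,h_0+\cdots+h_{s-1}+t-1$ are slots $1,\dots,t$ of row $s$: in case (a) all of them lie in the $s$-th block of both sequences and contribute $t\,n_s n'_s$; in case (b), where $t=h'_s+1\le h_s$, all $t$ of them are in the $s$-th block for $\CC$ while for $\CC'$ the first $h'_s=t-1$ are in the $s$-th block and the last is in the $(s+1)$-th, contributing $h'_s n_s n'_s+n'_{s+1}n_s$; case (c) is symmetric. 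Adding $S$ gives the three displayed formulas.

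The main obstacle is the bookkeeping underlying (1): one must check, running through the two cases $h_0>1$ and $h_0=1$ of the single-blow-up transformation (and, in the reduced form, the transition from a free row to the subsequent satellite rows), that agreement of the first $\ell$ slots is preserved by blow-up and that each slot really records tangent plus free/satellite type. The genuinely delicate point is the divergence in cases (b) and (c): showing that once one of the two rows $s$ is exhausted the two strict transforms at the common point $\CC^{(k)}={\CC'}^{(k)}$ acquire different tangent lines---the branch that has finished its $s$-th block having its multiplicity drop to $n_{s+1}$ (or $n'_{s+1}$) while the other retains multiplicity $n_s$ (or $n'_s$). This is exactly the place where the proof of \cite{apery} is followed step by step, the Hamburger--Noether formalism making all of it valid in arbitrary characteristic.
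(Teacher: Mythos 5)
Your proposal is correct and follows essentially the same route as the paper: part (1) by induction on blow-ups via the base case $k=0\Leftrightarrow a_{01}\neq a'_{01}\Leftrightarrow(s,t)=(0,1)$ together with the transformation rule for the HN expansion under a single blow-up, and part (2) by combining the resulting formula for $k$ with the Noether formula $[\CC,\CC']=\sum_{\ell=0}^{k}e_\ell e'_\ell$ (equivalently its one-step recursive form). You supply more of the bookkeeping than the paper's very terse proof does, but the strategy and the key ingredients are identical.
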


\begin{proof}
One has that $k=0$ if and only if $a_{0 1}\neq a'_{0 1}$. Hence, this situation is
equivalent to $(s,t)=(0,1)$ and the equality follows.
The case $k>0$ is equivalent to $a_{0 1}=a'_{0 1}$ and the proof
follows by induction using the expression of the HN expansion of the strict transform
of a branch in terms of the one of $\CC$.

The equality of the intersection multiplicity is a consequence of the expression for
the splitting number or can be proved also by induction using that
$[\CC,\CC'] = n_0 n'_0 + [\CC^{(1)}, {\CC'}^{(1)}]$ (see \cite[2.3.2 and 2.3.3]{campillo}).
\end{proof}

%
%
%

\begin{prop}\label{2branches}
Let $\ee$, $\ee'$ be two plane sequences and let $k\ge -1$ be an admissible number
for
them. Let $\CC$ be a branch with multiplicity sequence $\ee$. Then, there exists a
branch $\CC'$ with multiplicity sequence $\ee'$ and
such that $k$ is the splitting number of the curve with branches $\CC$ and $\CC'$.
In particular, $k$ is the splitting number of a pair of branches with multiplicity
sequences $\ee$ and $\ee'$ if and only if $k$ is admissible.
\end{prop}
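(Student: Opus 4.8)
The plan is to argue entirely through Hamburger--Noether expansions, using the bijection between plane sequences and HN-type data of Lemma \ref{bijcorr}, the description of the strict transform of a branch in terms of its HN expansion, and the formula that expresses the splitting number of a pair of branches as the place where their HN expansions, taken with respect to a common transversal parameter, first disagree.

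First, the case $k=-1$ is immediate: by the existence statement for branches with a prescribed HN type, any HN expansion of type $H(\ee')$ produces a plane algebroid branch with multiplicity sequence $\ee'$, and choosing its center different from the center of $\CC$ gives a (non-local) plane curve whose branches have sequences $\ee$ and $\ee'$ and splitting number $-1$. So assume $k\ge 0$; then by definition both branches lie at the same point, and after a generic linear change of coordinates $x$ is a transversal parameter for $\CC$ (and will automatically be one for the $\CC'$ we construct). Write the HN expansion of $\CC$ in rows $j=0,\ldots,r$, with data $H(\ee)$ consisting of $(h_0,\ldots,h_r)$, the free rows $0=s_0<s_1<\cdots<s_g=r$, the integers $k_1,\ldots,k_g$, and the coefficients $\{a_{ji}\}$ (with $a_{s_i,k_i}\neq 0$).

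The construction of $\CC'$ is then the following. Let $(s,t)$ be the unique pair with $k=h_0+\cdots+h_{s-1}+(t-1)$ and $1\le t\le h_s+1$. Build the HN expansion of $\CC'$ by copying rows $0,\ldots,s-1$ of $\CC$'s expansion verbatim and, in row $s$, taking $a'_{si}=a_{si}$ for $i<t$; the disagreement at position $t$ is then produced either by choosing $a'_{st}$ different from $a_{st}$ (when $t\le h_s$) or simply because $h'_s\neq h_s$ (when $t=h_s+1$), according to which of cases (a), (b), (c) of the splitting-number proposition applies; the remaining entries of the expansion are filled in by any choice of the non-forced coefficients compatible with the type $H(\ee')$. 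By the existence statement this expansion defines a branch $\CC'$ with multiplicity sequence $\ee'$, and by the splitting-number formula the splitting number of the pair $\{\CC,\CC'\}$ equals $h_0+\cdots+h_{s-1}+(t-1)=k$. Running the same dictionary in reverse handles the ``only if'' half of the last assertion: starting from a pair with splitting number $k$, one reads off the common initial segment of the two HN expansions and the nature of their first disagreement, and translates these back into the admissibility conditions.

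The heart of the matter --- and the step I expect to be the main obstacle --- is to show that the above construction is legitimate, i.e.\ that the copied portion is simultaneously an initial segment of a type-$H(\ee)$ and of a type-$H(\ee')$ expansion, and that a genuine disagreement can be forced exactly at position $(s,t)$ while respecting the constraints $H(\ee')$ imposes on $h'_s$, on whether row $s$ of $\CC'$ is free, and on its leading index. This is precisely where admissibility enters. Via the Euclidean-division description of $H(\ee)$ in Lemma \ref{bijcorr} and the proximity relation, condition~(1) of Definition~\ref{admissible2} forces the run-breakpoints of $\ee$ and $\ee'$ below level $k$ to coincide, hence $h_j=h'_j$ for $j<s$ and the same rows below $s$ to be free rows; condition~(2) (equality of the restriction numbers, i.e.\ of the free/satellite pattern) then forces $k_i=k'_i$ for those free rows, so that the fixed coefficients $a_{si}$ with $i<t$ of $\CC$ are legal coefficients for $\CC'$; and condition~(3) (either $k$ maximal with (1),(2) or $r(e_{k+1})=r(e'_{k+1})=1$) guarantees there is room to place the disagreement at $(s,t)$ in a way compatible with $H(\ee')$ --- this is where the three cases (a)--(c) get distinguished. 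The same dictionary, read backwards, yields the converse: a shared HN prefix forces (1) and (2) up to index $k$, and the type of the first disagreement forces (3). Verifying this correspondence carefully, including the boundary subcase $t=h_s+1$ where a row ``runs out'', is the technical bulk of the argument; all of it is valid in arbitrary characteristic precisely because HN expansions are, which is the only place where the proof departs from the one in \cite{apery}.
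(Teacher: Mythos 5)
Your overall strategy---construct $\CC'$ by prescribing an HN expansion of type $H(\ee')$ that agrees with that of $\CC$ up to the position $(s,t)$ dictated by $k$, and read the splitting number off the formula $k=h_0+\cdots+h_{s-1}+t-1$---is the same family of argument as the paper's, but organized differently. The paper does not build the whole expansion at once: it inducts on $k$, fixing only the single coefficient $A'_{01}$ (equal to $a_{01}$ if $k>0$, different if $k=0$), passing to the strict transforms $\CC^{(1)}$, $\w{\ee}$, $\w{\ee}'$, and invoking the (easily checked) fact that $k-1$ is admissible for $\w{\ee}$ and $\w{\ee}'$. That one-blow-up-at-a-time formulation is exactly the device that dissolves the bookkeeping you flag as ``the heart of the matter'': at each stage only one coefficient must be matched or separated, and the compatibility of the remaining rows with $H(\ee')$ is delegated to the inductive hypothesis rather than verified globally.

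Because you chose the global construction, that verification cannot be delegated, and as written you do not carry it out: you state that conditions (1)--(3) of admissibility ``force'' $h_j=h'_j$ for $j<s$, the coincidence of free rows and of the leading indices $k_i$, and the possibility of a disagreement exactly at $(s,t)$, but you explicitly defer the proof of these implications (``the technical bulk of the argument''). This is a genuine gap rather than a routine omission, for two reasons. First, the pair $(s,t)$ is not determined by $k$ and $\ee$ alone: the decomposition $k=h_0+\cdots+h_{s-1}+(t-1)$ is ambiguous at the boundary ($(s,h_s+1)$ and $(s+1,1)$ give the same $k$), and which of the cases (a)--(c) of the splitting-number formula you are in depends on $h'_s$ versus $h_s$, i.e.\ on data of $\ee'$ that must be shown consistent with the required disagreement. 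Second, when row $s$ of both expansions is a satellite-type position the coefficient there is forced to be $0$, so a disagreement cannot simply be ``chosen''; it is precisely conditions (3)--(4) of Definition~\ref{admissible} (equivalently condition (3) of Definition~\ref{admissible2}) that rule this situation out, and that implication is what needs to be proved. So the proposal is a sound plan whose central step is identified but not executed; either complete that dictionary explicitly, or switch to the paper's induction on $k$, where the only nontrivial claims are that $\w{\ee}'$ is again a plane sequence and that $k-1$ is admissible for the strict transforms.
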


\begin{proof}
The case $k=-1$ is trivial. Let
$\CC$ be a branch with multiplicity sequence $\ee$ and HN expansion
\begin{equation*}
z_{j-1} = \sum_{i=1}^{h_j} a_{ji}z_j^{i} + z_j^{h_j}z_{j+1}\; ; \; 0\le j \le r\;
\end{equation*}
and let $k\ge 0$ be an admissible number for $\ee$ and $\ee'$.
Let
\begin{equation*}
z'_{j-1} = \sum_{i=1}^{h'_j} A'_{ji}(z'_j)^{i} + (z'_j)^{h'_j}z'_{j+1}\; ; \; 0\le j
\le r'
\end{equation*}
be an HN type expansion for $H(\ee')$ in which we see the symbols $\{A'_{ij}\}$ as
parameters to be determined.
If $k=0$ it suffices to fix $A'_{01} = a'_{01}\in K$ such that
$a'_{01}\neq a_{01}$. If $k>0$ then we fix $A'_{01}=a_{01}$. Now let
$\w{\ee}= (e_1,\ldots)$ and $\w{\ee}'= (e'_1,\ldots)$ and let
$\CC^{(1)}$ be the strict transform of $\CC$ by one blowing-up.
The multiplicity sequence of $\CC^{(1)}$ is $\w{\ee}$, $\w{\ee}'$ is a plane sequence
and $k-1$ is an admissible number for $\w{\ee}$ and $\w{\ee}'$.
By induction hypothesis, there exists a branch $D$ with multiplicity sequence
$\w{\ee}'$ and splitting number with $\CC^{(1)}$ equal to $k-1$.
The HN expansion of $D$ completed with $A'_{01}=a_{01}$ provides a branch $\CC'$ with
multilicity sequence $\ee'$ and such that its splitting number with $\CC$ is $k$.
\end{proof}

\subsection{General case.}

Let $R \cong \OO_{1} \times \cdots \times \OO_c $ be a direct product of local
rings
$\OO_{j}$ ($1\le j \le c$), each one associated to a reduced plane algebroid curve
defined over an algebraically
closed field $K$.
Let us denote $\CC_1, \ldots, \CC_d$
the branches of $R$.
Let $\mathcal T$ be the multiplicity tree of $R$.
Take the notations given at the beginning of the section.
For each branch $\CC_i$, $i=1,\ldots,d$ one has its corresponding branch
${\mathcal T}^i$ of
$\mathcal T$ (i.e. a maximal completely ordered subtree of ${\cal T}$)
and so the sequence $\ee^i= (e_0,e_1,\ldots)$ of multiplicities of $\CC_i$.
For $i,j\in \{1,\ldots,d\}$ let $k_{i,j}+1$ be the length of the trunk of the
subtree of ${\mathcal T}$ given by $\CC_i$ and $\CC_j$, so
$k_{i,j}$ is just the splitting number of $\CC_1\cup \CC_2$.
The fact that $\mathcal T$ is the disjoint union of $c$ trees
implies some restrictions on the set of integers $\{k_{i,j}\}$, namely:

\begin{equation}\label{compatibility}
\text{Given } i,j,t\in\{1,\ldots,d\}.
\text{ If one has that }
k_{j,t} > k_{j,i} \text{ then  } k_{i,t} = k_{i,j}.
\end{equation}

Note that the condition (\ref{compatibility}) above is enough to construct a graph
${\mathcal T}(\{\ee^i\}, \{k_{i,j}\})$
by joining the $d$ sequences of integers $\{\ee^i; i=1,\ldots,d\}$ with the help of
the splitting vertices indicated by $\{k_{i,j}\}$.
(Pay attention that the graph is a tree if and only if $k_{i,j}\ge 0$ for any $i,j$).
More precisely:

\begin{lemma}\label{tree}
Let $E = \{\ee^i = (e^i_0, e^i_1, \ldots); i=1,\ldots, d\}$ be a set of sequences of
positive integers and
$\{k_{i,j}\ge -1\}$, $i, j\in \{1,\ldots,d\}, i\neq j$ an indexed set of integers
with $k_{i,j}=k_{j,i}$ and satisfying property (\ref{compatibility}).
Then there exists a weighted graph ${\mathcal T}={\mathcal T}(\{\ee^i\},
\{k_{i,j}\})$
such that the set of maximal completely ordered subgraphs of ${\cal T}$,
$\{{\cal T}^1, \ldots ,{\cal T}^d\}$, coincides with $E$ and for $i,j\in \{1,\ldots,
d\}$ the length of the trunk of ${\cal T}^i \cup {\cal T}^j \subset
{\cal T}$ is $k_{i,j} +1$.
\end{lemma}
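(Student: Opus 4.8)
The plan is to build $\mathcal{T}$ explicitly as a quotient of the disjoint union of the $d$ bamboos carrying the sequences $\ee^1,\dots,\ee^d$: at each level one glues together the vertices of those bamboos prescribed to share a common trunk up to that level. For each $\ell\ge 0$, define a relation $\sim_\ell$ on $\{1,\dots,d\}$ by setting $i\sim_\ell i$ for all $i$, and, for $i\neq j$, declaring $i\sim_\ell j$ if and only if $\ell\le k_{i,j}$. Reflexivity holds by construction and symmetry follows from $k_{i,j}=k_{j,i}$; the first key step — and the only place where hypothesis (\ref{compatibility}) is used — is transitivity. So one must show: if $\ell\le k_{i,j}$ and $\ell\le k_{j,t}$ with $i,j,t$ pairwise distinct (the remaining cases being immediate), then $\ell\le k_{i,t}$. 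Arguing by contradiction, suppose $k_{i,t}<\ell$. Then $k_{i,j}\ge\ell>k_{i,t}$, so applying (\ref{compatibility}) with the substitution $(i,j,t)\mapsto(t,i,j)$ yields $k_{t,j}=k_{t,i}$; but $k_{t,j}=k_{j,t}\ge\ell>k_{i,t}=k_{t,i}$, a contradiction. Hence each $\sim_\ell$ is an equivalence relation, and $\sim_{\ell+1}$ refines $\sim_\ell$ because $\ell+1\le k_{i,j}$ implies $\ell\le k_{i,j}$.

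Next I would assemble the graph. Put $\mathcal{N}_\ell:=\{1,\dots,d\}/\!\sim_\ell$; since $\sim_{\ell+1}$ refines $\sim_\ell$ there is a canonical surjection $\pi_\ell\colon\mathcal{N}_{\ell+1}\to\mathcal{N}_\ell$ sending a class to the unique class of $\mathcal{N}_\ell$ containing it. Define $\mathcal{T}=\mathcal{T}(\{\ee^i\},\{k_{i,j}\})$ to have vertex set $\mathcal{N}:=\bigsqcup_{\ell\ge 0}\mathcal{N}_\ell$, with an edge joining every $C\in\mathcal{N}_{\ell+1}$ to its parent $\pi_\ell(C)\in\mathcal{N}_\ell$, and attach to each vertex $C\in\mathcal{N}_\ell$ the weight vector in $\N^d$ whose $j$-th entry is $e^j_\ell$ when $j\in C$ and $0$ otherwise (well defined, as $C$ determines both its depth $\ell$ and the subset $C\subseteq\{1,\dots,d\}$). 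Every vertex of positive depth has a unique parent and reaches depth $0$ in finitely many steps, so $\mathcal{T}$ is a disjoint union of trees with roots the elements of $\mathcal{N}_0$ — a single tree precisely when all $k_{i,j}\ge 0$, i.e.\ when $\sim_0$ has a single class. For each $i$ let $\mathcal{T}^i$ be the subgraph given by the infinite path on the vertices $[i]_0,[i]_1,[i]_2,\dots$ with $[i]_\ell\in\mathcal{N}_\ell$; it is a maximal completely ordered subgraph, and reading off the $i$-th coordinates of the weights along it returns exactly $(e^i_0,e^i_1,\dots)=\ee^i$.

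It then remains to verify the two asserted properties. First, $\mathcal{T}^1,\dots,\mathcal{T}^d$ exhaust the maximal completely ordered subgraphs: each $\mathcal{T}^i$ is clearly one, and conversely any maximal completely ordered subgraph is an infinite path from a root through classes $C_0\supseteq C_1\supseteq C_2\supseteq\cdots$ (the inclusions because $\pi_\ell(C_{\ell+1})=C_\ell$ forces $C_{\ell+1}\subseteq C_\ell$), so the cardinalities $|C_\ell|$ form a non-increasing sequence of positive integers and are therefore eventually constant; hence $\bigcap_{\ell}C_\ell\neq\emptyset$, and any $i$ in this intersection satisfies $C_\ell=[i]_\ell$ for all $\ell$, so the path is $\mathcal{T}^i$. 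Second, the trunk of $\mathcal{T}^i\cup\mathcal{T}^j$ — its maximal common initial segment — consists of the vertices at those levels $\ell$ with $[i]_\ell=[j]_\ell$, and $[i]_\ell=[j]_\ell\iff i\sim_\ell j\iff \ell\le k_{i,j}$; by monotonicity of the relations $\sim_\ell$ these are exactly the levels $0,1,\dots,k_{i,j}$ (and there are none when $k_{i,j}=-1$), so the trunk has $k_{i,j}+1$ vertices, which is its length. The only genuine obstacle is the transitivity argument of the first paragraph: it is the whole reason compatibility (\ref{compatibility}) is imposed and has to be done with the correct relabelling of indices, whereas everything afterwards is bookkeeping about quotients of bamboos, with the mildest subtlety being the stabilization argument that identifies the maximal paths with the $\mathcal{T}^i$.
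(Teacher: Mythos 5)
Your proposal is correct and follows essentially the same route as the paper: the paper also defines the relations $i\sim_t j \iff k_{i,j}\ge t$, derives transitivity from property (\ref{compatibility}) (in the form $k_{i,s}\ge\min\{k_{i,j},k_{j,s}\}$, equivalent to your contradiction argument), and builds the graph from the equivalence classes with the same weights. You simply spell out in more detail the bookkeeping (parent maps, identification of the maximal completely ordered subgraphs, trunk count) that the paper leaves implicit.
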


\begin{proof}
The proof is easy by induction on the number of branches $d$. Otherwise,  we
can define directly the graph in the following way.
For each integer $t\ge 0$, let $i\sim_{t}j$ if and only if
$k_{i,j}\ge t$. If $k_{i,j}, k_{j,s}\ge t$, then by (\ref{compatibility}) one has
$k_{i,s}\ge \min \{k_{i,j}, k_{j.s}\}\ge t$. Thus, the relation $\sim_{t}$ is a
equivalence relation. For each equivalence class
$J_t$ we can take a vertex with weight $m(J_t)=(m_1,\ldots, m_d) \in \N^d$ defined as
$m_i = e^i_t$ if $i\in J_t$ and $m_i=0$ otherwise.
Notice that, if $t\ge \ell$ and  $i\sim_{t}j$, then $i\sim_{\ell}j$. Hence,
the result is the disjoint union of $c$ trees, each one with root in one
of the equivalence classes of $\sim_{0}$; in particular is a tree if and only if
$k_{i,j}\ge 0$ for all $i,j\in \{1,\ldots,d\}$.
\end{proof}

Adding to the Lemma the conditions of plane sequences and the admissibility one has:

\begin{prop}\label{multtree}
Let $\{\ee^i = (e^i_0, e^i_1, \ldots); i=1,\ldots, d\}$ be a set of sequences of
non-negative integers and
$\{k_{i,j}\ge -1\}$, $i, j\in \{1,\ldots,r\}, i\neq j$ an indexed set of integers
satisfying property (\ref{compatibility}). Let
${\mathcal T}={\mathcal T}(\{\ee^i\}, \{k_{i,j}\})$ be the weighted graph constructed
in Lemma \ref{tree}. Then there exists a plane curve with multiplicity tree
${\mathcal T}$ if and only if:
\begin{enumerate}
\item For $i=1,\ldots,r$, $\ee^i$ is a plane sequence.
\item For $i,j\in \{1,\ldots,r\}$, $i\neq j$, $k_{i,j}=k_{j,i}$ is an admissible
splitting number between the sequences $\ee^i$ and $\ee^j$.
\end{enumerate}
\end{prop}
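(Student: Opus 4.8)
The plan is to prove the two implications separately, with the harder direction being the construction of a plane curve realizing a given admissible tree. For the necessity direction, I would start from a plane curve $\mathcal{O}$ with multiplicity tree $\mathcal{T}$. Each branch $\CC_i$ is a plane irreducible algebroid curve, so its multiplicity sequence $\ee^i$ is a plane sequence by the classical results recalled in the $c=d=1$ subsection (the Proximity relation is automatic from the Hamburger--Noether expansion, equivalently from the resolution process). For each pair $i \neq j$, the subcurve $\CC_i \cup \CC_j$ is a plane curve with two branches, so by Proposition \ref{2branches} its splitting number $k_{i,j}$ is admissible with respect to $\ee^i, \ee^j$. The compatibility condition \eqref{compatibility} holds because $\mathcal{T}$ is literally built from the successive blow-ups of $\mathcal{O}$: the level at which branches $i$ and $j$ separate is determined by the neighbourhood structure, and if $\CC_i$ separates from $\CC_j$ strictly earlier than $\CC_j$ separates from $\CC_t$, then $\CC_i$ must already have separated from $\CC_t$ at the same moment it separated from $\CC_j$. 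This gives the "only if" direction with essentially no new work beyond citing the two-branch case.

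For the sufficiency direction, I would argue by induction on $d$, the number of branches. The base cases $d=1$ and $d=2$ are exactly the propositions already proved (the single-branch realization via Hamburger--Noether type expansions, and Proposition \ref{2branches}). For the inductive step, suppose $d \geq 3$ and we are given plane sequences $\ee^1, \ldots, \ee^d$ together with admissible, compatible splitting numbers $\{k_{i,j}\}$. First I would apply the inductive hypothesis to the data $\{\ee^1, \ldots, \ee^{d-1}\}$ with the splitting numbers $\{k_{i,j} : 1 \le i < j \le d-1\}$ — these still satisfy \eqref{compatibility} and each is admissible — obtaining a plane curve $\mathcal{O}'$ with branches $\CC_1, \ldots, \CC_{d-1}$ realizing the tree ${\mathcal T}(\{\ee^i\}_{i<d}, \{k_{i,j}\}_{i,j<d})$. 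Then I need to adjoin a $d$-th branch $\CC_d$ with multiplicity sequence $\ee^d$ so that its splitting number with each $\CC_i$ ($i < d$) equals the prescribed $k_{i,d}$.

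The construction of $\CC_d$ is the main obstacle, and I would handle it using Hamburger--Noether expansions in the style of the proof of Proposition \ref{2branches}. By compatibility \eqref{compatibility}, the prescribed values $\{k_{i,d} : i=1,\ldots,d-1\}$ are organized so that, ordering them, there is a single chain: $\CC_d$ must agree with some branch $\CC_{i_0}$ (one with maximal $k_{i_0,d}$) up to level $k_{i_0,d}$, and at each earlier level the set of branches it travels with is an equivalence class under $\sim_t$. Concretely, let $m = \max_i k_{i,d}$ and pick $i_0$ attaining it; I would build the HN expansion of $\CC_d$ row by row, choosing its free-point coefficients $\{A'_{ji}\}$ to coincide with those of $\CC_{i_0}$ up through the row corresponding to level $m$ (which forces the common part of the resolution, hence $\CC_d$ separates from $\CC_{i_0}$ exactly at level $m$, using admissibility of $k_{i_0,d}$ as in Proposition \ref{2branches}), and for levels $t < m$ at which $\CC_d$ is supposed to separate from some $\CC_i$ with $k_{i,d} = t$, choosing the next coefficient generically so that separation happens precisely there. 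Because at level $t$ all branches $\CC_i$ with $k_{i,d} \ge t$ form one equivalence class and already agree with each other up to level $t$ (this is where \eqref{compatibility} among the first $d-1$ branches is used), making $\CC_d$ agree with that common initial segment automatically produces the correct splitting number $k_{i,d}$ for every such $i$ simultaneously. Admissibility of each $k_{i,d}$ with respect to $\ee^i$ and $\ee^d$ is exactly what lets Proposition \ref{2branches} (applied to the pair $\CC_i, \CC_d$, or its inductive proof restarted at the appropriate strict transform) guarantee that the remaining coefficients of $\ee^d$'s HN expansion can be filled in freely. The delicate point to verify carefully is that a single choice of HN expansion for $\CC_d$ meets all $d-1$ constraints at once without conflict — this is precisely guaranteed by the tree structure encoded in \eqref{compatibility}, so no contradictory demands on a coefficient arise. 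Finally, the curve $\mathcal{O}$ with branches $\CC_1, \ldots, \CC_d$ obtained this way has multiplicity tree ${\mathcal T}(\{\ee^i\}, \{k_{i,j}\})$, completing the induction.
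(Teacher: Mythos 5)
Your proposal is correct and follows essentially the same route as the paper: induction on $d$, realizing all but one branch by the inductive hypothesis and then attaching the remaining branch via a Hamburger--Noether expansion whose splitting number with the branch it accompanies longest is prescribed (Proposition \ref{2branches}), with the compatibility condition (\ref{compatibility}) forcing all the other splitting numbers automatically. The only cosmetic differences are that the paper first disposes of the case where some $k_{i,j}=-1$ by splitting into smaller non-local pieces, and that once $\CC_d$ is made to coincide with $\CC_{i_0}$ up to level $m=k_{i_0,d}$ no further level-by-level coefficient choices are needed (as you yourself note at the end of that paragraph).
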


\begin{proof}
We will proceed by induction on the number of branches $d$. Notice that the case
$d\le 2$ is already known. Moreover, if there exists $i, j$ such that
$k_{i,j}=-1$, then the result is trivial because we can separate the
set of branches $\{1,\ldots, d\}$
in two parts $I, J$ such that $\# I, \# J <d$ and $k_{i,j}=-1$ for $i\in I$ and
$j\in J$. So, we can assume that $k_{i,j}\ge 0$ for any pair $i,j$, i.e. the
searched ring $R$ must be a local one.

Let us fix a branch $i=1$ and let $I = \{2,\ldots, d\}$. Let us assume that
$k_{1,2} \ge k_{1,i}$ for all $i\ge 2$.
Let ${\cal T}'$ be the sub-graph of ${\mathcal T}$ defined by the sequences
$\{\ee^i :  2\le i\le d\}$ and integers $\{k_{i,j} | i,j\ge 2\}$.
By the induction hypothesis, there exists a reduced curve $\CC'$ consisiting of the
branches $\CC_2, \ldots ,\CC_d$ such that the multiplicity tree of $\CC'$ is ${\cal T}'$.
Without loss of generality we can assume that
$\CC_i$ is given by an HN parametrization
$\varphi_i: (X,Y)\mapsto (x(t_i),y(t_i))$ in such
a way that, if $\varphi': K[[X,Y]]\to K[[t_2]]\times \cdots  \times K[[t_d]]$,
$\varphi'(f) = (\varphi_2(f), \ldots, \varphi_d(f))$,  then
$R' = K[[X,Y]]/\ker (\varphi)$ is the local ring of $\CC'$.

Let $\varphi_1: (X,Y) \mapsto (x(t_1), y(t_1))$ be an HN
parametrization of a branch $\CC_1$ such that its multiplicity sequence
coincides with $\ee^1$ and the splitting number with $\CC_2$ is equal to
$k_{1,2}$
(there exists by Proposition \ref{2branches}).
Being $k_{1,2}\ge k_{1,i}$ for all $i\ge 3$ it is clear that
the splitting number of $\CC_1$ and $\CC_i$ is equal to $k_{1,i}$.
Now we consider the map
$\varphi: K[[X,Y]]\to K[[t_1]]\times K[[t_2]]\times \cdots \times K[[t_d]]$ given by
$\varphi = (\varphi_1, \ldots, \varphi_d)$ and le $R = K[[X,Y]]/\ker(\varphi)$.
Then, one has that the multiplicity tree of $R$ coincides with
${\cal T}$ and the proof is finished.
\end{proof}

As a consequence, one also has:

\begin{theorem}\label{constructS}
Let $R \cong \OO_{1} \times \cdots \times \OO_c $ be a direct product of local
rings
$\OO_{j}$ ($1\le j \le c$), each one associated to a reduced plane algebroid curve
defined over an algebraically closed field $K$.
Let $\CC_1, \ldots, \CC_d$ be the set of branches of $R$.
The following elements are equivalent:
\begin{enumerate}
\item
The semigroup of values $S$ of $R$
\item The semigroups $S_i$, $1\le i\le d$, of each branch and the set of intersection
multiplicities
$\{[\CC_i,\CC_j] | 1\le i < j\le d\}$ between pairs of branches.
\item
The multiplicity tree ${\mathcal T}(R)$ of $R$.
\item
The set $E = \{\ee^i = (e^i_0, e^i_1, \ldots); i=1,\ldots, d\}$ of the multiplicity
sequences of the branches $\{\CC_i | 1\le i \le d\}$ plus the splitting numbers
$\{k_{i,j}\}$ between pairs of branches $\CC_i$, $\CC_j$; $1\le i < j\le d$.
\end{enumerate}
\end{theorem}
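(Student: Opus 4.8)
The plan is to prove the four equivalences by running around the cycle (1)$\Leftrightarrow$(3), (3)$\Leftrightarrow$(4), (4)$\Leftrightarrow$(2); all the real work sits in (1)$\Leftrightarrow$(3), the other two being essentially bookkeeping. First I would dispatch (3)$\Leftrightarrow$(4). By its very construction the multiplicity tree $\mathcal{T}(R)$ has as maximal completely ordered subtrees $\mathcal{T}^1,\dots,\mathcal{T}^d$ exactly the bamboos carrying the multiplicity sequences $\ee^1,\dots,\ee^d$, and for each pair $i\neq j$ the length of the trunk of $\mathcal{T}^i\cup\mathcal{T}^j$ inside $\mathcal{T}$ equals $k_{i,j}+1$; hence $\mathcal{T}(R)$ determines the data of (4). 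Conversely, since $\mathcal{T}(R)$ is a disjoint union of trees the family $\{k_{i,j}\}$ satisfies the compatibility condition \eqref{compatibility}, so Lemma \ref{tree} reconstructs the weighted graph $\mathcal{T}(\{\ee^i\},\{k_{i,j}\})$ from (4), and it coincides with $\mathcal{T}(R)$; the fact that the data of (3), equivalently (4), really comes from a plane curve is precisely the content of Proposition \ref{multtree}.

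Next, (4)$\Leftrightarrow$(2). For a single branch, the numerical semigroup $S_i$ and the plane sequence $\ee^i$ determine each other: this is the classical one-branch case recalled in Section \ref{sec:Multiplicities} (via Hamburger--Noether expansions, the proximity relation, and Ap\'ery's theorem). For a pair $i\neq j$, the Noether formula $[\CC_i,\CC_j]=\sum_{l=0}^{k_{i,j}}e^i_l e^j_l$ (read as $0$ when $k_{i,j}=-1$) exhibits, once $\ee^i$ and $\ee^j$ are fixed, the intersection multiplicity as a strictly increasing function of the splitting number $k_{i,j}$, hence an invertible one. Thus (2), (3) and (4) carry the same information.

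The heart of the matter is (1)$\Leftrightarrow$(3). Recall that every good semigroup in $\N^d$ decomposes uniquely as a direct product of local good semigroups according to a partition of the coordinates; applied to $v(R)$ this recovers the value semigroups of the local rings $\mathcal{O}_1,\dots,\mathcal{O}_c$, and thus the edges of $\mathcal{T}$ issuing from the root. For each local $\mathcal{O}$ obtained in this way, its fine multiplicity is $\e=\min\bigl(v(\mathcal{O})\setminus\{\boldsymbol{0}\}\bigr)$, which is read off directly from $v(\mathcal{O})$, and by Theorem \ref{4.1apery} together with Theorem \ref{final} --- which give the \emph{same} relation $A'_i=A_i-i\e$ between the levels of $\Ap(v(\mathcal{O}),\e)$ and those of $\Ap(v(\mathcal{B}(\mathcal{O})),\e)$, whether or not $\mathcal{B}(\mathcal{O})$ is local --- the set $\Ap(v(\mathcal{B}(\mathcal{O})),\e)$, and therefore $v(\mathcal{B}(\mathcal{O}))=\bigcup_{n\ge 0}\bigl(n\e+\Ap(v(\mathcal{B}(\mathcal{O})),\e)\bigr)$, is determined by $v(\mathcal{O})$. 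Decomposing $v(\mathcal{B}(\mathcal{O}))$ again yields the value semigroups of the rings in the first neighbourhood, together with the corresponding edges and weights; iterating reconstructs all of $\mathcal{T}(R)$ from $v(R)$. The process terminates because, for instance, $l_{\mathcal{O}}(\overline{\mathcal{O}}/\mathcal{O})$, which is computable from the semigroup, strictly decreases under blow-up until every local ring is regular, at which point all the semigroups equal $\N$. Running exactly these steps in the opposite order --- starting from the leaves of $\mathcal{T}(R)$, where the semigroup is $\N$, and using $A_i=A'_i+i\e$ with $\e$ the weight attached to the node at hand to pass from the product of the children's semigroups to the semigroup of the parent --- recovers $v(R)$ from $\mathcal{T}(R)$.

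The main obstacle is this last cycle: one must be certain that the purely combinatorial operation $A_i\mapsto A_i-i\e$ (respectively $A'_i\mapsto A'_i+i\e$) genuinely computes the Ap\'ery set of the value semigroup of the blow-up even at the non-local intermediate rings that occur during the resolution, which is precisely what Theorems \ref{semilocalring1part}, \ref{semilocalring2part} and \ref{final} were built to guarantee, and one must check that the downward induction really bottoms out at the smooth points. Both points are secured by the results of Sections 4 and 5, so the remaining write-up is routine.
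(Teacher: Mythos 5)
Your proposal is correct and follows essentially the route the paper intends: the paper states Theorem \ref{constructS} without a written proof ("As a consequence, one also has"), and your assembly — (3)$\Leftrightarrow$(4) via Lemma \ref{tree} and Proposition \ref{multtree}, (4)$\Leftrightarrow$(2) via the one-branch theory and the Noether formula, and (1)$\Leftrightarrow$(3) by iterating the level relation $A'_i=A_i-i\e$ of Theorems \ref{4.1apery} and \ref{final} along the resolution tree — is exactly the intended argument. No gaps.
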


\section*{Acknowledgements}
Marco D'Anna and Vincenzo Micale are supported by the project PIA.CE.RI 2020-2022 Università di Catania - Linea 2 - "Proprietà locali e globali di anelli e di varietà algebriche". 

Marco D'Anna is also supported by the PRIN 2020 "Squarefree Gr\"obner degenerations, special varieties and related topics".

F\'elix Delgado is partially supported by grant PID2022-138906NB-C21 funded by MICIU/AEI/ 10.13039/501100011033 and by ERDF/EU. 

Lorenzo Guerrieri is supported by the NAWA Foundation grant Powroty "Applications of Lie algebras to Commutative Algebra" - PPN/PPO/2018/1/00013/U/00001.

Marco D'Anna and Lorenzo Guerrieri acknowledge the support of Indam-GNSAGA.




\end{document}